\allowdisplaybreaks \numberwithin{equation}{section}
\numberwithin{equation}{section}
\newtheorem{theorem}{Theorem}[section]
\newtheorem{proposition}[theorem]{Proposition}
\newtheorem{lemma}[theorem]{Lemma}
\theoremstyle{definition}
\theoremstyle{remark}
\newtheorem{remark}[theorem]{Remark}
\newcommand{\ep}{\varepsilon}
\newcommand{\Om}{\Omega}
\begin{document}
		
	\title[3D Euler equations with helical symmetry]{On concentrated vortices of 3D incompressible Euler equations under  helical symmetry: with swirl}
	
	\author{Guolin Qin, Jie Wan}

	\address{School of Mathematical Science, Peking University, Beijing 100871, People's Republic
		of China}
	\email{qinguolin18@mails.ucas.ac.cn}

	\address{School of Mathematics and Statistics, Beijing Institute of Technology, Beijing 100081,  P.R. China}
	\email{wanjie@bit.edu.cn}

	\begin{abstract}
In this paper, we consider the existence of concentrated  helical vortices of 3D incompressible Euler equations with swirl. First, without the assumption of the orthogonality condition, we derive a  2D vorticity-stream formulation of  3D incompressible Euler equations under helical symmetry. Then based on this system, we  deduce a non-autonomous second order semilinear elliptic equations in divergence form, whose solutions correspond to traveling-rotating invariant helical vortices  with non-zero helical swirl. Finally,   by using Arnold's variational method, that is, finding maximizers of a properly defined energy functional over a certain function space and proving the asymptotic behavior of maximizers, we construct   families of concentrated traveling-rotating helical vortices of 3D incompressible Euler equations with non-zero  helical swirl in infinite cylinders. As parameter $ \varepsilon\to0 $, the associated vorticity fields   tend asymptotically to a singular helical vortex filament evolved by the binormal curvature flow. 
	\end{abstract}
	
	 \maketitle{\small{\bf Keywords:} Incompressible Euler equations; Helical symmetry; With swirl; Variational methods; Semilinear elliptic equations.   \\	
	 	
		 {\bf 2020 MSC} Primary: 76B47; Secondary: 35J25, 76B03.}

	\section{Introduction and Main results}\label{sec1}
In this paper, we study the concentrated helical vortices of an incompressible ideal fluid described by the Euler equations. We are interested in the existence of  vorticity field with helical symmetry which is concentrated in a helical tube of radius $ \varepsilon\ll 1 $ around a helical curve in $ \mathbb{R}^3 $, when the orthogonality condition fails.
The motion of an inviscid incompressible flow  is   governed by the following Euler equations
	 \begin{equation}\label{1-1}
	 	\begin{cases}
	 		\partial_t\mathbf{v}+(\mathbf{v}\cdot \nabla)\mathbf{v}=-\nabla P,\ \ &\text{in}\ \  \Om\times (0,T),\\
	 		\nabla\cdot \mathbf{v}=0,\ \ &\text{in}\ \ \Om\times (0,T),\\ 
	 	\mathbf{v}\cdot \mathbf{n}=0,\ \ &\text{on}\ \ \partial \Om\times (0,T),\\
	 	\mathbf{v}(\cdot, 0)=\mathbf{v}_0(\cdot), \ \ &\text{in}\ \ \Om,
	 	\end{cases}
	 \end{equation}
	 where the domain $\Om\subseteq \mathbb{R}^3 $, $ \mathbf{v}=(v_1,v_2,v_3) $ is the velocity field, $ P$ is the scalar pressure and  $ \mathbf{n} $ is the outward unit normal to $ \partial \Om$. For a solution  $ \mathbf{v} $ of \eqref{1-1}, define the corresponding vorticity field of the fluid $$\mathbf{w} :=\nabla\times\mathbf{v}.$$   
	
Taking curl of the first equation in \eqref{1-1},  we obtain the following equations for vorticity:
	 \begin{align}\label{1-2}
	 	\begin{cases}
	 		\partial_t \mathbf{w} +(\mathbf{v}\cdot\nabla)\mathbf{w} =(\mathbf{w} \cdot\nabla)\mathbf{v},
	 		\ \ &\text{in}\ \   \Om\times (0,T),\\
	 		\mathbf{v}\cdot \mathbf{n}=0,\ \ &\text{on}\ \  \partial \Om\times (0,T)\\
	 		\mathbf{w} (\cdot, 0)=\nabla\times \mathbf{v}_0(\cdot), \ \ &\text{in}\ \  \Om.
	 	\end{cases}
	 \end{align}

 \subsection{Backgrounds}

The study of the evolution of vorticity fields of \eqref{1-2} concentrated in a tube with small cross-section  around a smooth curve can be traced back to Helmholtz's classical result in 1858 \cite{Hel}. 
 Generally speaking, there are two main concerns: 
\begin{enumerate}
\item[$\bullet$] How does the curve evolved?\\
\item[$\bullet$] How to give a rigorous justification of the relation between vorticity field of \eqref{1-2} and the curve?
\end{enumerate}

The first question  seems to have been well studied.  Helmholtz \cite{Hel} first found that vorticity fields of \eqref{1-2} with axial symmetry (called the vortex ring)  supported in a toroidal region with small cross-section near a circular curve have an approximately steady form and travel with a large constant velocity along the axis of the ring. Then, Lord Kelvin and Hicks \cite{Lam,Tho} further gave a quantitative formula for the velocity of a vortex ring with a small cross-section, which confirmed Helmholtz's observation.  Let $ l $ be any oriented closed curve with tangent vector field $\mathbf{t}$ that encircles the aforementioned vorticity region   once and $ \sigma $ is any surface with $ l $ as its boundary. Define the circulation of a vortex  
   \begin{equation}\label{1001}
 	\kappa=\oint_l\mathbf{v}\cdot \mathbf{t}dl=\iint_{\sigma}\mathbf{w} \cdot\mathbf{n}d\sigma.
 \end{equation}
It is  shown in \cite{Lam,Tho} that vortex rings concentrated around a torus with fixed radius $r^*$, circulation $\kappa$ and a small, nearly singular cross-section of radius $0<\varepsilon\ll 1$  would approximately move at the velocity 
   \begin{equation}\label{1-3}
   	\frac{\kappa}{4\pi r^*}\left(\ln\frac{8r^*}{\varepsilon}-\frac{1}{4}\right).
   \end{equation}
For general curves, the first derivation of the motion law of curves dates back to the work of da Rios  in 1906 \cite{DR} and  Levi-Civita  in 1908 \cite{LC,LC2} with the help of potential theory. Consider a vortex tube with a small section of radius $\ep$ and a fixed circulation $ \kappa $, uniformly distributed around an evolving smooth curve $\Gamma (t) $. By using the Biot-Savart law and calculating the leading order of instantaneous velocity of the curve, \cite{DR} indicates that the  curve $\Gamma (t) $ evolves by its \textit{binormal curvature flow}: let $\Gamma(t)$ be parameterized by $\gamma(s,t)$ with $s$ the arc length parameter, then the motion of $\gamma$ would asymptotically  obey a law of the form
    \begin{equation}\label{1-4}
    	\partial_t \gamma=\frac{\kappa}{4\pi}|\ln\varepsilon|(\partial_s\gamma\times\partial_{ss}\gamma)=\frac{\kappa\bar{K}}{4\pi}|\ln\varepsilon|\mathbf{b}_{\gamma(t)},
    \end{equation}
where $ \mathbf{b}_{\gamma(t)}  $ is the binormal unit vector and $ \bar{K} $ is its local curvature. We  refer interested readers to \cite{FM} for a generalization of \eqref{1-4}. By applying the scale $ t = | \ln\varepsilon|^{-1}\tau $,   \eqref{1-4} is equivalent to
    \begin{equation}\label{1-5}
    	\partial_\tau \gamma=\frac{\kappa \bar{K}}{4\pi}\mathbf{b}_{\gamma(\tau)}.
    \end{equation}
\eqref{1-4} is known by many names, including the vortex filament equation, and the local induction approximation. In subsequent years, the binormal curvature flow was rediscovered several times, see for example Arms and Hama \cite{AH} and Moore and Saffman \cite{MS}. More discussions about \eqref{1-4} can be seen in  the texts \cite{MB,Ric,Ric2,S} and references
therein.

The second problem, that is, a rigorous mathematical justification of the relation between vorticity field of \eqref{1-2} and the curve \eqref{1-4}, is still a very concerned problem and has not been completely solved. Jerrard and Seis \cite{JS} first justified  da Rios' computation mathematically under some assumptions on a solution to \eqref{1-2} which remains suitably concentrated
around an evolved vortex filament. It is shown in \cite{JS} that under certain conditions, if there exists a family of solutions $ \mathbf{w}_\varepsilon $ to \eqref{1-2} such that in the sense of distribution,
\begin{equation}\label{1-6}
	\mathbf{w} _\varepsilon(\cdot,|\ln\varepsilon|^{-1}\tau)\to \kappa \delta_{\gamma(\tau)}\mathbf{t}_{\gamma(\tau)},\ \ \text{as}\ \varepsilon\to0,
\end{equation}
where  $ \mathbf{t}_{\gamma(\tau)} $ is the tangent unit vector of $ \gamma $ and $ \delta_{\gamma(\tau)} $ is the uniform Dirac measure on the curve, then $ \gamma(\tau) $ satisfies \eqref{1-5}. See also \cite{JS2}. 
On the other hand, for a general curve that solves \eqref{1-4},  finding a family of true solutions of \eqref{1-2} that satisfy \eqref{1-6} remains a long-standing open problem, which is commonly known as the \emph{vortex filament conjecture}. Until now, the only rigorous result in favor of the vortex filament conjecture for Euler flows is restricted to flows with several kinds of symmetry structures: translation invariance, axial symmetry and helical symmetry, which correspond to the cases where the curve $ \gamma $ is a straight line, a traveling circle  and a traveling-rotating helix, respectively. 
In the case of straight lines, one always reduces the problem to the vortex desingularization problem of  point vortices in planar Euler equations, which was investigated in much literature, see e.g. \cite{CF, CLW, CPY, DDMW, Gar2, Gac2, Has2, Has3,  HM, SV, T} and the references therein. For traveling circles, Fraenkel \cite{Fra1,Fra2} first provided a  constructive proof of the existence of a vortex ring concentrated around a torus with fixed radius $r^*$ and a small cross-section of radius $\varepsilon>0$, traveling with constant speed $\sim|\ln\ep|$, rigorously varifying the results of Helmholtz \cite{Hel}. We refer readers to \cite{ALW, CWWZ,VS, BF1, FT,Ni,Nor72,YJ}  for more results on the existence of vortex rings.

The third particular curve satisfying \eqref{1-4} is the traveling-rotating helix parameterized as
\begin{equation}\label{1007}
	\gamma(s,\tau)=\left( r_*\cos\left( \frac{-s-a_1\tau}{\sqrt{h^2+r_*^2}}\right),  r_*\sin\left( \frac{-s-a_1\tau}{\sqrt{h^2+r_*^2}}\right), \frac{hs-b_1\tau}{\sqrt{h^2+r_*^2}}\right)^t,
\end{equation}
where $ r_* > 0$ is the constant  characterizing  the distance between a point in $ \gamma(\tau) $ and the $ x_3 $-axis and $h\neq 0 $  stands for the pitch of the helix, and
\begin{equation*}
	a_1=\frac{\kappa h}{4\pi (h^2+r_*^2)}, \ b_1=\frac{\kappa r_*^2}{4\pi (h^2+r_*^2)}.
\end{equation*}
Many articles concern  the global well-posedness of helical solutions of \eqref{1-1} and \eqref{1-2} under suitable functional spaces, see, e.g., \cite{AS, BLN,Du,ET, GZ, JLN, Jiu1}. Recently, the vortex filament conjecture was verified to be true for  traveling-rotating helices by D$\acute{\text{a}}$vila et al. \cite{DDMW2}, where a family of smooth traveling-rotating helical vortices of \eqref{1-2} concentrated around a helical filament  parameterized by \eqref{1007} in $ \mathbb{R}^3 $ in the sense of \eqref{1-6} were constructed via  the  inner-outer gluing method.  
More results of traveling-rotating helical vortices of \eqref{1-2} with small cross-section concentrated around the helix \eqref{1007} in infinite cylinders were constructed in \cite{CW} through the Lyapunov-Schmidt reduction method and \cite{CL23, CW2} by a variational method. For more results, see, e.g., \cite{GM}. 

We remark that in all existing results \cite{CFL, CL23, CW, CW2, DDMW2, GM},  constructions of traveling-rotating invariant helical vortices were proved under the assumption of the \textit{``without swirl'' condition}: 
\begin{equation}\label{1009}
	 \mathbf{v} \cdot \vec{\xi} \equiv 0,
\end{equation}
where $	\vec{\xi}(x)= (x_2, -x_1, h)^t. $  
The assumption \eqref{1009} plays an essential role in aforementioned works, which makes it possible to reduce the vorticity equations \eqref{1-2} to a 2D evolved model for some scalar function $ w $ and then construct helical vortices by solving an autonomous semilinear elliptic equation in a planar domain, see also subsection 1.2. 
Note that under \eqref{1009},  the vorticity field and the velocity field are orthogonal at every point, so \eqref{1009} is also called the  \textit{orthogonality condition}, see \cite{ET}.
However,   the assumption \eqref{1009} appears to be artificial as many helical symmetric vorticity fields may not satisfy this condition, such as Beltrami flows, see \cite{EP,EP2}. So here comes a natural question: when \eqref{1009} fails, is it possible to construct a family of  helical vorticity fields  of \eqref{1-2} which concentrates in a helical tube with small cross-section of radius $ \ep $ around the  helix \eqref{1007}?

In fact, there is some evidence suggesting this possibility 
 when the orthogonality condition \eqref{1009} is not satisfied. From a physical perspective, it has been formally demonstrated in \cite{FM} that when the velocity field and the vorticity field are not orthogonal,   the instantaneous velocity of the vortex tube with cross-section of radius $ \ep $ still satisfies the vortex filament equation \eqref{1-4}, as referenced in (3.1), \cite{FM}. The key  lies in decomposing the vorticity into ''axial vorticity'' along the curve and ''transverse vorticity'' perpendicular to the curve, and proving formally that the transverse vorticity makes no contribution to the induced velocity. Mathematically, as mentioned earlier \cite{JS} provided sufficient conditions for the curve where the vorticity field concentrates to satisfy the vortex filament equation, as stated in Theorem 2, \cite{JS}. It is noted that the proof of Theorem 2 does not require the assumption of the orthogonality condition, thus making it possible to construct  vortex solutions of \eqref{1-2} concentrated around a curve satisfying \eqref{1-4} even when the vorticity and velocity fields are not orthogonal. The third argument is that in the study of   vortex rings, \cite{CZ,Tur89} have constructed vortex rings with small cross-section of radius  $ \ep $ concentrating around a traveling circles, and the corresponding velocity field is not orthogonal to the vorticity field. In \cite{Abe22}, using the critical point theory the author proved the existence of vortex rings satisfying the Beltrami flow condition. Here the Beltrami flow   condition  means that the vorticity field and the velocity field are parallel everywhere, so the corresponding velocity field must not be orthogonal to the vorticity field. To conclude, it is possible to construct helical vortices of \eqref{1-2} concentrating around the helix \eqref{1007}, when \eqref{1009} is not valid.

In this paper,  without assuming  \eqref{1009}, we will construct  helical vortices  of \eqref{1-2} concentrating in a helical tube with cross-section of radius $ \ep $ around the  helix \eqref{1007}. Our idea is as follows. First, using the helical symmetry structure we derive a  2D vorticity-stream evolved model \eqref{1-23} of  3D incompressible Euler equations \eqref{1-2}, when the orthogonality condition \eqref{1009} does not hold, see  Theorem \ref{thm1}. 
Second, using the rotational invariance of solutions, we deduce a non-autonomous second order semilinear elliptic equations in divergence form, whose solutions correspond to traveling-rotating invariant helical vortices  with swirl, see \eqref{3-9} and \eqref{3-10} in section 3. Finally,   by using Arnold's variational method,   we construct    concentrated traveling-rotating invariant helical vortices of 3D incompressible Euler equations with swirl in infinite cylinders. As parameter $ \varepsilon\to0 $, the associated vorticity fields $ \mathbf{w}_{\ep} $  tend asymptotically to the helix \eqref{1007}, see Theorem \ref{thm2}. 

We conclude this subsection with a brief overview on related
works. Indeed, helical solutions were also constructed for other models, see for example \cite{WY} for the {S}chr\"{o}dinger map equations,  \cite{chi,ddmr2} for the Gross-Pitaevskii equation  and \cite{ddmr} for the Ginzburg-Landau equation. Another related topic is the knotted solutions of Euler equations \eqref{1-1}. The possibility of non-trivial steady vortex configurations featuring knots and links was conjectured by Kelvin \cite{Kel}. Such solutions were found by Enciso and Peralta-Salas \cite{EP,EP2}. Explicit knotted solutions to the binormal
curvature flow were studied in \cite{Keen}.

\subsection{2D vorticity-stream formulation}

Now we begin to show our main results. To construct helical vorticity field of three-dimensional Euler equations \eqref{1-2} without  the assumption \eqref{1009}, the first key step   is to reduce  the system \eqref{1-2} to  a two-dimensional evolved model by the  helical symmetric structure. For any $ \theta\in\mathbb R$, let us define  the rotation by an angle $\theta$ around the $x_3$-axis
	\begin{equation*}
		Q_\theta=\begin{pmatrix}
			R_\theta & 0  \\
			0 & 1
		\end{pmatrix}  \  \ \text{with}\   \ R_\theta=\begin{pmatrix}
			\cos\theta & \sin\theta  \\
			-\sin\theta &\cos\theta
		\end{pmatrix}.
	\end{equation*}
Let $ h>0 $ be a fixed constant. We introduce a group of one-parameter helical transformations $ \mathcal{H}_h=\{H_{\theta}:\mathbb{R}^3\to\mathbb{R}^3\mid  \theta\in\mathbb{R}\} $, where
	\begin{equation*}
		H_{\theta}(x)=Q_\theta(x)+\begin{pmatrix}
			  0  \\
			0  \\ 
			h\theta
		\end{pmatrix}=\begin{pmatrix}
		x_1\cos\theta+x_2\sin\theta  \\
		-x_1\sin\theta +x_2\cos\theta  \\ 
		x_3+h\theta
	\end{pmatrix}.
	\end{equation*}

We can  define helical domains as follows. For $ \Omega\subseteq \mathbb{R}^3$, we say that $ \Omega $ is a helical domain, if $ \Omega $ is invariant under the action of $ \mathcal{H}_h $, that is, for any $ \theta\in\mathbb R$
\begin{equation}\label{hel dom} 
H_{\theta}(\Omega):=\{H_{\theta}(x)\mid x\in\Omega\}=\Omega.
\end{equation}	
Denote $ D=\{(x_1,x_2)\in \mathbb{R}^2\mid (x_1,x_2,0)\in \Omega\} $ the cross-section of $ \Omega $.

Following \cite{DDMW2, Du, ET}, we can define helical functions and vector fields. We say that
  a scalar function $f:\Om\rightarrow\mathbb R $ is  helical, if $$ f(H_{\theta}(x))=f(x),  \ \ \forall\, \theta\in\mathbb{R}, x\in \Om, $$
  and a vector field $ \mathbf{v}:\Om\rightarrow\mathbb R^3$ is   helical, if $$ \mathbf{v}(H_{\theta}(x))= Q_{\theta} \mathbf{v}(x),\ \ \forall\, \theta\in\mathbb{R}, x\in \Om. $$
 Naturally, a function pair ($\mathbf{v}, P$) is called a $ helical$ solution  of \eqref{1-1} in $ \Om $, if ($\mathbf{v}, P$) solves \eqref{1-1} and both vector field $ \mathbf{v} $ and scalar function $ P $ are helical in the above sense. 
 
 Denote the field of tangents of symmetry lines of $ \mathcal{H}_h $
 \begin{equation}\label{def of xi}
 	\vec{\xi}(x)= (x_2, -x_1, h)^t,
 \end{equation}
 where $ \mathbf{v}^t $ is the transposition of a vector $ \mathbf{v} $. For a velocity field $\mathbf{v} $,  we define its \emph{helical swirl} as 
  \begin{equation}\label{1-21}
 	v_{\vec{\xi}}(x):=\mathbf{v}(x)\cdot \vec{\xi}(x).
 \end{equation}
 
 As mentioned earlier, it is proved in \cite{DDMW2,Du, ET} that if a   helical solution ($\mathbf{v}, P$) of \eqref{1-1}   satisfies  the \textit{orthogonality condition}:
 \begin{equation}\label{1-7}
 	v_{\vec{\xi}}\equiv0,
 \end{equation}
 then its  vorticity field $ \mathbf{w}  $ satisfies
 \begin{equation}\label{1-8}
 	\mathbf{w} =\frac{w_3}{h}\overrightarrow{\xi},
 \end{equation}
 where $ w_3=\partial_{x_1}v_2-\partial_{x_2}v_1 $,   the third component of $ \mathbf{w}  $, is a helical function. Moreover, if we set $ w(x_1,x_2,t)=w_3(x_1,x_2,0,t) $, then  
 $ w $ satisfies the following 2D evolved vorticity-stream equations:
  \begin{equation}\label{1-9}
  	\begin{cases}
  		\partial_t w+\nabla^\perp\varphi\cdot \nabla w=0,\ \ &\text{in}\ \   D\times(0,T),\\
  		\mathcal{L}_{K}\varphi=w,\ \ &\text{in}\ \  D\times(0,T),\\
  		\varphi=0,\ \ &\text{on}\ \   \partial D\times(0,T),\\
  		w(\cdot,0)=w_0(\cdot),\ \ & \text{in}\ \   D,
  	\end{cases}
  \end{equation}
  where $ \varphi $ is called the stream function,  $ \mathcal{L}_{K}\varphi:=-\text{div}(K(x_1,x_2)\nabla\varphi) $ is a second order elliptic operator in divergence form with the coefficient matrix
  \begin{equation}\label{1-10}
  	K(x_1,x_2)=\frac{1}{h^2+x_1^2+x_2^2}
  	\begin{pmatrix}
  		h^2+x_2^2 & -x_1x_2 \\
  		-x_1x_2 &  h^2+x_1^2
  	\end{pmatrix},
  \end{equation}
  and $ \perp $ denotes the clockwise rotation through $ \pi/2 $, i.e., $ (a,b)^\perp=(b,-a) $.  

By studying the rotating invariant solutions of  \eqref{1-9}, several families of  traveling-rotating helical vortices with the assumption \eqref{1-7} were constructed in \cite{CFL, CL23, CW, CW2, DDMW2, GM}. 
Our first result is that even in the absence of the zero helical swirl condition \eqref{1-7}, the three-dimensional Euler equations \eqref{1-2} under helical symmetry can still be reduced  to a two-dimensional evolved system.

\begin{theorem}\label{thm1}
	Let $(\mathbf{v}, P)$ be a $ helical$ solution pair  of \eqref{1-1} in $ \Om $ and  $\mathbf{w} =(w_1,w_2,w_3)^t$  be the corresponding vorticity field of $ \mathbf{v} $. Let $ v_{\vec{\xi}} $ be the helical swirl of $\mathbf{v}$ defined by \eqref{1-21} and $ D $ be the cross-section of $ \Omega $. For $(x_1,x_2)\in D$, define $v(x_1,x_2,t)= v_{\vec{\xi}}(x_1,x_2,0,t)$ and $w(x_1,x_2,t)=w_3(x_1,x_2,0,t)$. Then $(v, w)$ solves the following evolved system:
	\begin{equation}\label{1-23}
		\begin{cases}
			\partial_t v+\nabla^\perp\varphi\cdot \nabla v=0,\ \ &\text{in}\ \   D\times(0,T),\\
			\partial_t w+\nabla^\perp\varphi\cdot \nabla w=\partial_{x_1} v\partial_{x_2}\left(\frac{x_1\partial_{x_1}\varphi+x_2\partial_{x_2}\varphi}{|{\vec{\xi}}|^2}\right)\\\qquad \qquad -\partial_{x_2} v\partial_{x_1}\left(\frac{x_1\partial_{x_1}\varphi+x_2\partial_{x_2}\varphi}{|{\vec{\xi}}|^2}\right)
		 +\frac{2 v\left( x_2\partial_{x_1} v-x_1\partial_{x_2} v\right)}{|{\vec{\xi}}|^4},\ \ &\text{in}\ \   D\times(0,T),\\
			\mathcal{L}_{K}\varphi=w+\frac{2 h^2 v}{|{\vec{\xi}}|^4}+\frac{ x_1\partial_{x_1} v+x_2\partial_{x_2} v}{|{\vec{\xi}}|^2},\ \ &\text{in}\ \  D\times(0,T),\\
			\varphi=0,\ \ &\text{on}\ \   \partial D\times(0,T),\\
			w(\cdot,0)=w_{3}((\cdot,0),0),\ \  v(\cdot,0)=v_{\vec{\xi}}((\cdot,0),0), & \text{in}\ \   D.
		\end{cases}
	\end{equation}

Conversely, suppose that $(v, w)$ is a solution pair of \eqref{1-23}. Let $  \tilde{\mathbf{u}}=(\tilde{u}_1,\tilde{u}_2,\tilde{u}_3)^t $ and $\tilde{\mathbf{v}}=(\tilde{v}_1,\tilde{v}_2,\tilde{v}_3)^t $  be vector fields  in $ D $ such that for $ (x_1,x_2) \in D$
\begin{equation*} 
\begin{pmatrix}
\tilde{u}_1(x_1,x_2) \\
\tilde{u}_2(x_1,x_2)
\end{pmatrix}=\frac{1}{|\vec{\xi}|^2}\begin{pmatrix}
- x_1x_2 &   h^2+x_1^2  \\
-(h^2+x_2^2 ) &  x_1x_2
\end{pmatrix}  \begin{pmatrix}
\partial_{x_1}\varphi(x_1,x_2) \\
\partial_{x_2}\varphi(x_1,x_2)
\end{pmatrix}, 
\end{equation*}
\begin{equation*} 
\tilde{u}_3(x_1,x_2)=\frac{-x_2\tilde{u}_1(x_1,x_2)+x_1\tilde{u}_2(x_1,x_2)}{h} ,
\end{equation*}
and $\tilde{\mathbf{v}}(x_1,x_2)=\tilde{\mathbf{u}}(x_1,x_2)+\frac{v(x_1,x_2)\vec{\xi}}{|\vec{\xi}|^2}$.
Define for  any $ (x_1,x_2,x_3)\in \Om $
$$\mathbf{v}(x_1,x_2,x_3)=Q_{x_3/h}\tilde{\mathbf{v}}\left( R_{-x_3/h} (x_1,x_2)\right),
$$ 
and
$$\mathbf{w} (x_1,x_2,x_3)=\frac{w\left(R_{-x_3/h}(x_1,x_2)\right)}{h} \vec{\xi}+\frac{(\partial_{x_2}v\left(R_{-x_3/h}(x_1,x_2)\right), -\partial_{x_1}v\left(R_{-x_3/h}(x_1,x_2)\right), 0)^t}{h}.$$
Then $ (\mathbf{v}, \mathbf{w} ) $ is a helical solution pair of
the vorticity equations \eqref{1-2}.
\end{theorem}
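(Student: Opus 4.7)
The plan is to use the helical symmetry to parametrise every 3D quantity by its trace on the cross-section $D$ and translate each equation one by one. Two identities drive the reduction: every helical scalar $f$ satisfies $\vec{\xi}\cdot\nabla f=0$, so at $x_3=0$ one may replace $\partial_{x_3}f$ by $(x_1\partial_{x_2}f-x_2\partial_{x_1}f)/h$; and every helical vector field $\mathbf{F}$ satisfies $(\vec{\xi}\cdot\nabla)\mathbf{F}=J\mathbf{F}$ with $J$ the infinitesimal rotation generator about the $x_3$-axis. With these in hand I would decompose $\mathbf{v}=\mathbf{u}+(v_{\vec{\xi}}/|\vec{\xi}|^2)\vec{\xi}$ with $\mathbf{u}\cdot\vec{\xi}=0$; at $x_3=0$ orthogonality forces $\tilde{u}_3=(-x_2\tilde{u}_1+x_1\tilde{u}_2)/h$, so $\tilde{\mathbf{u}}$ is parametrised by two scalars on $D$, and $\nabla\cdot\mathbf{v}=0$ rewritten in 2D via the helical identity admits a stream function. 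Matching the free components produces exactly the matrix formula relating $(\tilde{u}_1,\tilde{u}_2)$ to $\nabla\varphi$ stated in the theorem.

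Next I would derive the elliptic equation and the transport equation for $v$. Computing $w=\partial_{x_1}\tilde{v}_2-\partial_{x_2}\tilde{v}_1$ via the decomposition, the transverse part yields $\partial_{x_1}\tilde{u}_2-\partial_{x_2}\tilde{u}_1=\mathcal{L}_K\varphi$ by a direct algebraic check on the matrix formula, while the swirl part contributes $-\mathrm{div}(v\mathbf{x}/|\vec{\xi}|^2)=-2h^2 v/|\vec{\xi}|^4-(x_1\partial_{x_1}v+x_2\partial_{x_2}v)/|\vec{\xi}|^2$; rearranging gives the third line of \eqref{1-23}. For the transport of $v=v_{\vec{\xi}}|_{x_3=0}$, note that $\vec{\xi}$ generates the helical group, so the helicality of $P$ gives $\nabla P\cdot\vec{\xi}=0$; dotting the Euler equation with $\vec{\xi}$ and using that $(\mathbf{v}\cdot\nabla)\vec{\xi}=(v_2,-v_1,0)\perp\mathbf{v}$ yields $\partial_t v_{\vec{\xi}}+\mathbf{v}\cdot\nabla v_{\vec{\xi}}=0$. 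Restricting to $x_3=0$ and using the helical identity for $\partial_{x_3}v$, a calculation based on the identities $\tilde{v}_1-x_2\tilde{v}_3/h=\partial_{x_2}\varphi$ and $\tilde{v}_2+x_1\tilde{v}_3/h=-\partial_{x_1}\varphi$ (which hold since $\det M=h^2|\vec{\xi}|^2$ in the stream-function formula, and the swirl cancels out) collapses the transport velocity to $\nabla^\perp\varphi$.

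The $w$-evolution is the main technical step and the expected obstacle. Taking the third component of $\partial_t\mathbf{w}+(\mathbf{v}\cdot\nabla)\mathbf{w}=(\mathbf{w}\cdot\nabla)\mathbf{v}$ at $x_3=0$, the transport part reduces to $\partial_t w+\nabla^\perp\varphi\cdot\nabla w$ by the same manoeuvre used for $v$. The stretching term $(\mathbf{w}\cdot\nabla\mathbf{v})_3$ produces the non-standard right-hand side. Here I would use the identity $\mathbf{w}=(w/h)\vec{\xi}+(\partial_{x_2}v,-\partial_{x_1}v,0)/h$ at $x_3=0$ (the formula stated in the theorem, obtained from $\mathbf{w}=\nabla\times\mathbf{v}$ once the swirl is retained), expand $\sum_i w_i\partial_i v_3$, and eliminate every $\partial_{x_3}$ via the helical identity. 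The resulting expression reorganises into the derivatives of $(x_1\partial_{x_1}\varphi+x_2\partial_{x_2}\varphi)/|\vec{\xi}|^2$ and the term $2v(x_2\partial_{x_1}v-x_1\partial_{x_2}v)/|\vec{\xi}|^4$ appearing in \eqref{1-23}. The bookkeeping is substantial but mechanical once the swirl decomposition of $\mathbf{w}$ is in hand.

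For the converse I would simply reverse the reductions. Given $(v,w)$ solving \eqref{1-23}, define $\tilde{\mathbf{v}}$ on $D$ by the matrix formula in the statement, extend to $\Omega$ by $\mathbf{v}(x)=Q_{x_3/h}\tilde{\mathbf{v}}(R_{-x_3/h}(x_1,x_2))$, and analogously for $\mathbf{w}$. Helicality, $\nabla\cdot\mathbf{v}=0$, and $\mathbf{w}=\nabla\times\mathbf{v}$ then follow by direct check, the last by the same algebra that produced the elliptic equation. The 3D vorticity equation reduces at $x_3=0$ to \eqref{1-23}, hence holds there by hypothesis, and then holds on all of $\Omega$ by helical equivariance. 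The pressure $P$ is recovered by integrating the curl-free field $-\partial_t\mathbf{v}-(\mathbf{v}\cdot\nabla)\mathbf{v}$; its curl vanishes precisely because of the vorticity equation, and $P$ is automatically helical by uniqueness up to a time-dependent constant.
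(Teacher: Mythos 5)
Your proposal follows essentially the same route as the paper: decompose $\mathbf{v}=\mathbf{u}+v_{\vec{\xi}}\vec{\xi}/|\vec{\xi}|^2$, introduce the stream function for the orthogonal part, derive transport equations for $v_{\vec{\xi}}$ and $w_3$ from the Euler/vorticity equations, and use the helical identities $\nabla_{\vec{\xi}}f=0$, $\nabla_{\vec{\xi}}\mathbf{F}=(F_2,-F_1,0)^t$ to eliminate all $\partial_{x_3}$ derivatives at $x_3=0$. The identities $\tilde v_1-x_2\tilde v_3/h=\partial_{x_2}\varphi$ and $\tilde v_2+x_1\tilde v_3/h=-\partial_{x_1}\varphi$ you invoke are correct and are exactly what underlies the paper's Lemma \ref{lm2-5} collapsing the 3D advection to $\nabla^\perp\varphi\cdot\nabla$.

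One place where your sketch is thinner than the actual argument is the converse. You say the 3D vorticity equation ``reduces at $x_3=0$ to \eqref{1-23}, hence holds there by hypothesis, and then holds on all of $\Omega$ by helical equivariance.'' But \eqref{1-23} contains only the two scalar transport equations (for $v$ and $w$), while the vorticity equation is a three-component system. Because $\mathbf{w}=\tfrac{w_3}{h}\vec{\xi}+\tfrac{1}{h}(\partial_{x_2}v_{\vec{\xi}},-\partial_{x_1}v_{\vec{\xi}},0)^t$ is determined by two scalars, the three components are redundant, but \emph{showing} that the first two components follow from \eqref{1-23} requires a real computation: the paper differentiates the $v_{\vec{\xi}}$ transport equation in $x_1,x_2$, combines it with the $w_3$ equation, and uses $\nabla\cdot\mathbf{v}=0$ together with the helical identities to obtain a telescoping cancellation (see the chain \eqref{2-11}–\eqref{2-15}). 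This is the part you would need to supply explicitly; the appeal to helical equivariance alone does not give it, since equivariance only propagates an identity from $x_3=0$ to all of $\Omega$ once it is known at $x_3=0$ for all three components. Everything else in your plan matches the paper's structure.
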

\begin{remark}
Theorem \ref{thm1} implies that to find helical solutions of \eqref{1-2}, it suffices to solve solution pairs $ (v, w) $ of the 2D evolved model \eqref{1-23}. Note that if the orthogonality condition \eqref{1-7} holds, then $ v(x_1,x_2,t)\equiv0 $ in \eqref{1-23} and we immediately recover \eqref{1-9} from \eqref{1-23}. Thus to some extend,   system \eqref{1-23} in Theorem \ref{thm1} generalizes previous 2D classical model \eqref{1-9} in \cite{CW,DDMW2,ET}. 

We note that when \eqref{1-7} holds, \cite{ET} obtained the existence and uniqueness of $ L^\infty $ weak solutions to \eqref{1-9}, which corresponds to Yudovich's global well-posedness result in the case of the 2-dimensional incompressible Euler equations, see \cite{AS,BLN,JS} for more results. However  when  \eqref{1-7} fails, to our knowledge  there is no result to obtain the global well-posedness of  solutions to the system \eqref{1-23}, see \cite{Jiu1, LMNNT} and reference therein.
\end{remark}
\begin{remark}
Due to the complexity of 3D incompressible Euler equations, many references always use specific symmetries to  simplify the original problem, among which  axial symmetry is most  widely used.  There are many results devoted to deducing 3D Euler equations under axial symmetry with and without swirl to   2D models, and we refer readers to Proposition 2.15 in \cite{MB} for the 2D formulation for these cases. The vorticity-stream formulation \eqref{1-23} plays a parallel role in the context of 3D Euler equations under helical symmetry, just as the  Proposition 2.15 in \cite{MB} (take $\nu=0$ there) does for 3D Euler equations under axial symmetry. We believe that \eqref{1-23}  will be useful in other related  research area of helical solutions to 3D incompressible Euler equations.
\end{remark}

\bigskip

\subsection{Traveling-rotating helical vortices with swirl}
Based on \eqref{1-23}, we now consider the existence of concentrated helical vortices in infinite cylinders, such that the corresponding  support sets of vorticity fields remain invariant in shape as time evolves. Let $\Om\in \mathbb{R}^3$ be an infinite cylinder $ B_R(0)\times\mathbb{R} $ for some $ R>0 $, which is a helical domain defined by \eqref{hel dom}. Let $ D $ be the cross-section of $ \Om $. 
 Let $\bar\alpha>0$ be a fixed constant.  We study rotating invariant solutions of \eqref{1-23} being of the form
\begin{equation}\label{1-12}
	w(x,t)=W(R_{\bar\alpha t}x),\ \ \  v(x,t)=V(R_{\bar\alpha t}x),\ \ \ \varphi(x,t)=\Phi(R_{\bar\alpha t}x).
\end{equation}

Substituting \eqref{1-12} into \eqref{1-23}, we deduce later in section \ref{sec3} that  it suffices to solve the following semilinear second order elliptic equations: 
\begin{equation}\label{1-24}
	\begin{cases}
		\mathcal{L}_{K}\Phi=\frac{2h^2F_1\left(\Phi-\frac{\bar\alpha}{2}|x|^2 \right)}{(h^2+|x|^2)^2}+\frac{F_1\left(\Phi-\frac{\bar\alpha}{2}|x|^2 \right)F_1'\left(\Phi-\frac{\bar\alpha}{2}|x|^2 \right)}{h^2+|x|^2}-\frac{\bar\alpha |x|^2 F_1'\left(\Phi-\frac{\bar\alpha}{2}|x|^2 \right)}{h^2+|x|^2}\\
		\qquad\qquad\qquad\qquad\qquad\qquad\qquad\qquad\qquad\qquad+F_2\left(\Phi-\frac{\bar\alpha}{2}|x|^2 \right),\ \ &\text{in}\ \  D,\\
		\Phi=0,\ \ &\text{on}\ \   \partial D,\\
	\end{cases}
\end{equation}
for some profile functions $F_1$ and $F_2$. Note that if we impose $F_1\equiv0$ , then \eqref{1-24} coincides with  the elliptic system considered in \cite{CL23, CW, CW2, DDMW2, GM}, which corresponds to traveling-rotating helical vortices of \eqref{1-2} with the orthogonality condition \eqref{1-7}. 

We observe that  if $F_1\not\equiv0$, then  solutions to \eqref{1-24} correspond  to  rotating invariant solutions to \eqref{1-23} with non-trivial $ v $, and thus to traveling-rotating helical vortices of  \eqref{1-2} with non-zero helical swirl $ v_{\xi} $. Our second result is the existence of a family of concentrated helical vortices without the assumption \eqref{1-7}  in infinite cylinders, whose vorticity field tends to a helical vortex filament  \eqref{1007} as $ \varepsilon\to0 $. 
\begin{theorem}\label{thm2}
	Let $ h\not=0 $, $\kappa>0 $ and $ r_*\in (0,R) $ be any given numbers. Let  $ \gamma(\tau) $ be the helix parameterized by equation \eqref{1007}. Then there exists a small number $\ep_0\in(0,1)$ such that for any $ \varepsilon\in (0, \ep_0)$, there exists	a helical symmetric solution pair $ (\mathbf{v}_\varepsilon, P_\varepsilon)(x,t)  $ of  \eqref{1-1} such that the support set of  $ \mathbf{w} _\varepsilon=\nabla\times \mathbf{v}_\varepsilon $ is a topological traveling-rotating helical tube that does not change form and concentrates near $ \gamma(\tau) $ in sense   that for all $ \tau $,
	\begin{equation*}
	\text{dist}\left (\partial(supp(\mathbf{w} _\varepsilon(\cdot, |\ln\varepsilon|^{-1}\tau))), \gamma(\tau)\right ) \to 0,\ \ \text{as}\ \varepsilon\to0.
	\end{equation*}
	
	Moreover, the following properties hold:
	\begin{enumerate}
		\item The vorticity field $ \mathbf{w} _\varepsilon $ tends to $ \gamma  $ in distributional sense, i.e., 
		\begin{equation*}
		\mathbf{w} _\varepsilon(\cdot,|\ln\varepsilon|^{-1}\tau)\to \kappa \delta_{\gamma(\tau)}\mathbf{t}_{\gamma(\tau)},\ \ \text{as}\ \varepsilon\to0.
		\end{equation*}
		\item The  helical swirl $v_{\vec{\xi},\ep}$   defined by \eqref{1-21} is not vanishing. That is, $v_{\vec{\xi},\ep}\not\equiv0$. 
		\item Define $ \bar{A}_\varepsilon=\{(x_1,x_2)\in D\  |\  (x_1,x_2,0)\in supp(\mathbf{w} _\varepsilon) \}$ the cross-section of the support of $ \mathbf{w} _\varepsilon $. Then there are constants $ r_1,r_2>0 $ independent of $ \varepsilon $ such that
		\begin{equation*}
		r_1\varepsilon\leq diam(\bar{A}_\varepsilon)\leq r_2\varepsilon.
		\end{equation*}
	\end{enumerate}
\end{theorem}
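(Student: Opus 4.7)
By Theorem \ref{thm1}, it suffices to construct, for each small $\varepsilon>0$, a triple $(V_\varepsilon,W_\varepsilon,\Phi_\varepsilon)$ on the cross-section $D=B_R(0)$ that corresponds via ansatz \eqref{1-12} to a rotating-invariant solution of \eqref{1-23}, i.e.\ a solution of the elliptic system \eqref{1-24} with appropriate profile functions $F_1,F_2$. The plan is to obtain this triple via Arnold's variational principle: realize $(W_\varepsilon,V_\varepsilon)$ as a maximizer of a kinetic-energy-type functional over an $\varepsilon$-dependent admissible class, read the nonlinear profile relations off the Euler--Lagrange equations, and then analyze the asymptotic concentration of the maximizers as $\varepsilon\to 0$.

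The first step is to set up the variational problem. I would introduce an admissible class
$\mathcal{A}_\varepsilon$ consisting of pairs $(W,V)\in L^\infty(D)\times L^\infty(D)$ with $W\ge 0$, $\mathrm{supp}(W)\Subset D$, $\int_D W\,dx=\kappa$, and $L^\infty$-bounds of order $\varepsilon^{-2}$, together with a constraint on $V$ (for instance a bound on $\int_D V^2/|\vec{\xi}|^2\,dx$ and a normalization forcing $V\not\equiv 0$, so that property (2) of the theorem is non-empty). For each $(W,V)$, define $\Phi=\Phi_{W,V}$ as the solution of the linear elliptic problem
\begin{equation*}
\mathcal{L}_K\Phi=W+\tfrac{2h^2V}{|\vec\xi|^4}+\tfrac{x_1\partial_{x_1}V+x_2\partial_{x_2}V}{|\vec\xi|^2},\qquad \Phi|_{\partial D}=0,
\end{equation*}
and consider the functional
\begin{equation*}
E_\varepsilon(W,V)=\tfrac12\int_D K\nabla\Phi\cdot\nabla\Phi\,dx-\tfrac{\bar\alpha_\varepsilon}{2}\int_D|x|^2 W\,dx+\mathcal{G}(V),
\end{equation*}
where the angular-momentum term fixes the radius of concentration and $\mathcal{G}$ is a lower-order term whose variation produces the $F_2$ part of \eqref{1-24}. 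The rotation parameter $\bar\alpha_\varepsilon$ is chosen so that, after the time-rescaling $t=|\ln\varepsilon|^{-1}\tau$, the induced helical motion matches $a_1,b_1$ in \eqref{1007}; concretely $\bar\alpha_\varepsilon$ should behave like $\tfrac{\kappa h}{4\pi(h^2+r_*^2)}\,|\ln\varepsilon|$ up to lower-order corrections. Existence of a maximizer $(W_\varepsilon,V_\varepsilon)$ then follows from the direct method: the $L^\infty$ bound gives weak-$*$ compactness, the coercivity of $\mathcal{L}_K$ (whose coefficient matrix \eqref{1-10} is uniformly positive definite on $\bar D$) yields continuity of the energy in the stream-function part, and the angular-momentum and swirl terms are continuous under weak convergence.

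With a maximizer in hand, I would derive the profile relations by performing bang-bang and level-set variations on $W$ (yielding $W_\varepsilon=F_1'(\Phi_\varepsilon-\tfrac{\bar\alpha_\varepsilon}{2}|x|^2)$ on its support with $F_1'$ monotone, up to rescaling) and $L^2$-type variations on $V$ (yielding $V_\varepsilon$ as a pointwise function of $\Phi_\varepsilon-\tfrac{\bar\alpha_\varepsilon}{2}|x|^2$), and verifying that these recover exactly \eqref{1-24}. The concentration analysis is the heart of the argument: the Green's function of $\mathcal{L}_K$ on $D$ splits as $-\tfrac{1}{2\pi}\ln|x-y|+H(x,y)$ plus a bounded remainder dictated by $K$, so an energy expansion of $E_\varepsilon$ at a maximizer should give
\begin{equation*}
E_\varepsilon(W_\varepsilon,V_\varepsilon)=\tfrac{\kappa^2}{4\pi}|\ln\varepsilon|+\mathcal{W}(x_0)+o(1),
\end{equation*}
for some effective Kirchhoff--Routh-type function $\mathcal{W}$ depending on the concentration point $x_0\in D$ and on $\bar\alpha_\varepsilon$. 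Matching trial functions supported near the circle $\{|x|=r_*\}$ against the upper bound will force $|x_0|=r_*$ and $\mathrm{diam}(\mathrm{supp}\,W_\varepsilon)=O(\varepsilon)$ (with the lower bound $\gtrsim\varepsilon$ coming from the $L^\infty$ constraint and mass normalization). Lifting $(W_\varepsilon,V_\varepsilon)$ back via the formulas of Theorem \ref{thm1} then produces the helical solution, with non-vanishing swirl by construction of $\mathcal{A}_\varepsilon$, and with distributional convergence \eqref{1-6} following from the cross-sectional concentration together with the 3D-to-2D correspondence.

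The principal obstacles I anticipate are, first, handling the coupling between $V$ and $\Phi$ in the non-autonomous nonlinearity of \eqref{1-24}: the terms $F_1F_1'/(h^2+|x|^2)$ and $\bar\alpha|x|^2F_1'/(h^2+|x|^2)$ are not of standard monotone type, so one must carefully design the constraint on $V$ so that the energy remains coercive and weakly upper semicontinuous while still producing a maximizer with $V_\varepsilon\not\equiv 0$; second, proving the sharp diameter estimate and the exact location $r_*$ of the concentration, which requires a delicate expansion of the operator $\mathcal{L}_K^{-1}$ near the concentration point together with a rearrangement argument adapted to the non-Euclidean metric induced by $K$; third, ensuring that the swirl contribution in the effective energy $\mathcal{W}$ is of lower order than the $|\ln\varepsilon|$ self-interaction, so that the binormal curvature flow law \eqref{1-5} is recovered at leading order despite the presence of the swirl.
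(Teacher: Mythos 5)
Your proposal captures the broad Arnold-type strategy (energy maximization over a truncated class, Green's-function expansion of $\mathcal L_K$, concentration analysis), but the variational setup you propose is structurally different from what the paper actually does, and the difference is not cosmetic. You maximize over the \emph{pair} $(W,V)$ with $V$ carrying its own constraint, and you hope to read both profile relations $V=F_1(\Phi-\tfrac{\bar\alpha}{2}|x|^2)$ and $W=F_2(\cdots)+\cdots$ off the Euler--Lagrange conditions. That cannot work for $V$: the relation $V=F_1(\Phi-\tfrac{\bar\alpha}{2}|x|^2)$ is the general solution of the transport equation $\nabla^\perp(\Phi-\tfrac{\bar\alpha}{2}|x|^2)\cdot\nabla V=0$ coming from the first equation of \eqref{1-23} under the rotating ansatz; it is a constitutive part of the ansatz, not a stationarity condition. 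If you actually compute the $V$-variation of your functional you get, after integrating by parts in the source term of the $\Phi$-equation, a condition of the form $\tfrac{2h^2\Phi}{|\vec\xi|^4}-\operatorname{div}\bigl(\tfrac{\Phi\,(x_1,x_2)}{|\vec\xi|^2}\bigr)=\mathcal G'(V)+\lambda$, which is not $V=F_1(\Phi-\tfrac{\bar\alpha}{2}|x|^2)$ for any $F_1$ and no choice of $\mathcal G$ fixes this.

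The paper's key move is precisely the one you skip: once $F_1,F_2$ are \emph{prescribed} (here $F_1(s)=\ep^{-1}a(s-\mu_\ep)_+$, $F_2(s)=\ep^{-2}(b+\bar\alpha a\ep)\mathbf 1_{\{s-\mu_\ep>0\}}$), the rotating-invariant system collapses to a single semilinear equation for $\zeta:=\mathcal L_K\Phi$, namely \eqref{3-10}, whose right-hand side is a pointwise function $\ep^{-2}i_\ep\bigl(|x|,\mathcal G_K\zeta-\tfrac{\bar\alpha}{2}|x|^2-\mu_\ep\bigr)$ of $\mathcal G_K\zeta$ alone. So $\zeta$ --- not $(W,V)$, and not $W$ alone --- is the variational unknown. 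The functional is then $E_\ep(\zeta)=\tfrac12\int\zeta\,\mathcal G_K\zeta-\tfrac{\bar\alpha}{2}\int|x|^2\zeta-\ep^{-2}\int J_\ep(|x|,\ep^2\zeta)$, with $J_\ep$ the (modified) Legendre transform of $\int_0^s i_\ep$, so that the bathtub/Lagrange-multiplier analysis returns exactly \eqref{4-5}. Also note the angular-momentum term weighs $\zeta$, not $W$; and $V$ is recovered as $F_1(\mathcal G_K\zeta-\tfrac{\bar\alpha}{2}|x|^2)$ rather than being an independent unknown, so nonvanishing of the swirl is automatic from $a>0$ and you need no ad hoc normalization $V\not\equiv0$.

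Two further steps you gloss over are essential. First, because of the cap $\zeta\le\Lambda/\ep^2$, the bathtub principle produces a maximizer with a residual ``vortex patch'' term on $\{\zeta=\Lambda/\ep^2\}$; the paper must and does show (via a uniform $L^\infty$ bound on $(\psi_{\ep,\Lambda})_+$ and then choosing $\Lambda$ large) that this set has measure zero, so that the maximizer genuinely solves \eqref{3-10}. Without this, ``read the profile off the Euler--Lagrange equations'' does not yield a solution of \eqref{1-24}. Second, proving the distributional limit $\mathbf w_\ep\to\kappa\,\delta_{\gamma(\tau)}\mathbf t_{\gamma(\tau)}$ is not a consequence of cross-sectional concentration alone: the 3D vorticity contains the term $\tfrac1h(\partial_{x_2}v_{\vec\xi}^\ep,-\partial_{x_1}v_{\vec\xi}^\ep,0)^t$ and since $V_\ep=\ep^{-1}a(\psi_\ep)_+$, the naive size of $\int\partial_{x_i}V_\ep$ is $O(1)$, not $o(1)$. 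The paper controls this by a blow-up/rearrangement argument showing the rescaled stream function converges to a radial limit (Lemmas \ref{le12}--\ref{lm4-13}), which forces $\ep^{-1}\int_D\partial_{x_i}(\psi_\ep)_+\to0$. This cancellation is the reason the swirl does not spoil the limit law; it requires a separate argument, not merely ``ensuring the swirl contribution in $\mathcal W$ is lower order.''
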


The idea of proof of Theorem \ref{thm2} is as follows. Let   $a>0$ and  $b\geq0$ be some fixed constants and $\varepsilon\in (0,1)$. Let $ \alpha=\frac{\kappa}{4\pi h\sqrt{h^2+r_*^2}} $ be a constant dependent of $ \kappa,h $ and $ r_* $. We choose $\bar\alpha=\alpha |\ln\ep|$ and
\begin{equation}\label{nonl}
F_1(s)=\ep^{-1}a (s-\mu_\ep)_+,\ \ \   F_2(s)=\ep^{-2}(b+\bar\alpha a \ep) \textbf{1}_{\{s-\mu_\ep>0\}},\ \ \  s\in \mathbb R,
\end{equation}
in \eqref{1-24}, where $s_+:=\max\{s,0\}$,  $ \textbf{1}_{A_0} $ the characteristic function of a set $ {A_0} $, namely $ \textbf{1}_{A_0}=1 $ in $ {A_0} $ and $ \textbf{1}_{A_0}=0 $ in $ {A_0}^c  $, and $ \mu_\ep $ is a proper constant. Indeed,  $ \mu_\ep $ is a Lagrange multiplier whose value is determined in Lemma \ref{lm4-2}.

We prove Theorem \ref{thm2} by constructing solutions to \eqref{1-24} via Arnold's variational principle, which is successfully used in the construction of concentrated helical vortices with  the assumption \eqref{1-7} in infinite cylinders, see, e.g., \cite{CW}. For $ \Lambda>0 $, we define constraint sets
\begin{equation*} 
	\mathcal{M}_{\kappa, \Lambda} =\left\{ \omega\in L^1\cap L^\infty(D) \ \Big| \ 0\leq \omega\leq \frac{\Lambda}{\ep^2},\,\,\, \int_{D} \omega dx=\kappa\right\}, 
\end{equation*}
and the energy functional  
\begin{equation*} 
	E_\ep(\omega)=\frac{1}{2}\int_{D} \omega\mathcal{G}_{K} \omega \mathrm{d} x-\frac{\bar{\alpha}}{2}\int_{D} |x|^2 \omega \mathrm{d} x-\ep^{-2}\int_{D}  J_\ep(|x|, \ep^2\omega(x)) \mathrm d x, 
\end{equation*}
where $ \mathcal{G}_{K} $ is the inverse operator of $ \mathcal{L}_K $ defined by \eqref{1-10}, $ |x|=\sqrt{x_1^2+x_2^2}$ and 
\begin{equation}\label{def of J} 
		J_\ep\left(r, s\right)=\frac{(h^2+r^2)^2\left(s-\frac{h^2\bar{\alpha} a\ep}{h^2+r^2}-b\right)_+^2}{4h^2a\ep+2a^2(h^2+r^2)}.
\end{equation}

The reason of the choice of $ J(r,s) $ in \eqref{def of J} is shown  in section 4. By proving the existence of maximizers of $E_\ep$ relative to $\mathcal{M}_{\kappa, \Lambda} $ and analyzing their asymptotic behavior, we will eventually find a family of concentrated helical vortex solutions  of \eqref{1-1} with non-zero helical swirl.  

\begin{remark}
The reason of taking $F_1$ and $F_2$ in \eqref{1-24} in the form of \eqref{nonl} is for simplicity of the proof. Indeed, by a slight modification of arguments in the present paper (see the proof in section 4), one may construct solutions of \eqref{1-24} with other  types of profile functions $ F_1$ and  $F_2 $. For example, for any  $p>1$,  we can take $F_1(s)=\ep^{-1}  (s-\mu_\ep)_+^{p+1},\    F_2(s)=\ep^{-2} (s-\mu_\ep)_+^{p}  $  and   construct traveling-rotating helical vortices with swirl, which correspond to $ C^1 $ classical helical vortex solutions of \eqref{1-1}.  Note also that constants $ a,b $ in \eqref{nonl} is arbitrary. By choosing different $ a $ and $ b $, we can get different families of concentrated helical vorticity field of \eqref{1-2} satisfying results in Theorem \ref{thm2}.
\end{remark}

\begin{remark}
	One of the key ingredient in the proof of Theorem \ref{thm2} in section 4 is the refined decomposition of Green's functions for  the operator $\mathcal L_K$ in the disk obtained in   \cite{CW2} (see Lemma \ref{lm4-1}). Recently, similar decomposition of Green's functions for   $\mathcal L_K$ in the whole space was proved in \cite{CLQW} and helical vortex patches without helical  swirl in $\mathbb R^3$ were constructed by a variational method. Therefore, by using decomposition of Green's functions  obtained in \cite{CLQW}  and combining ideas of the proof of Theorem \ref{thm2}, we are able to construct a family of concentrated helical vortices with \emph{non-zero helical swirl} in the whole space $\mathbb R^3$. Due to the absence of new difficulties, we will omit the statement and proof of this result in this paper.
\end{remark}

The rest of this paper is organized as follows. In section 2, we derive the 2D  vorticity-stream formulation of 3D Euler equations under helical symmetry with swirl and prove Theorem \ref{thm1}. Some basic properties of helical functions and vector fields are proved. In section 3, we show that to find traveling-rotating invariant helical vortices, it  is sufficient to solve the semilinear elliptic equations \eqref{1-24}. In section 4, we prove the existence and  asymptotic behaviors of maximizers of $ \mathcal{E}_\varepsilon $ in the set $ \mathcal{M}_{\varepsilon, \Lambda}$, from which  we obtain  rotating invariant solution pairs $ (v,w) $ to equations \eqref{1-23} and finish the proof of Theorem \ref{thm2}. 

\section{2D vorticity-stream formulation for helical vortices with swirl}
In this section, we deduce a 2D vorticity-stream evolved system for \eqref{1-2} under helical symmetry without the assumption \eqref{1-7} and thus give proof of Theorem \ref{thm1}. 

Let us start by giving some notations which are frequently used in the following sections. Let $ \Om $ be 
a helical domain defined by \eqref{hel dom} and $ D $ be the cross-section of $ \Om. $ For a general vector field $ \mathbf{v} $ in $ \Om $, we denote $ \mathbf{w}=\nabla\times \mathbf{v} $ the vorticity field of $  \mathbf{v} $. Let $ \vec{\xi} $ be the vector field defined by \eqref{def of xi} and $ |\vec{\xi}|=\sqrt{h^2+x_1^2+x_2^2} $ be the magnitude of $ \vec{\xi} $.  The directional derivative operator along $ \vec{\xi} $ is denoted by
\begin{equation*}
\nabla_{\vec{\xi}} =\vec{\xi}\cdot\nabla=x_2\partial_{x_1}  -x_1\partial_{x_2} +h\partial_{x_3}.
\end{equation*}

The following property can be proved directly by using the definitions of helical functions and vector fields, see also \cite{ET}.
\begin{lemma}[\cite{ET}]\label{lm2-1}
	The following claims hold true:
	\begin{itemize}
\item [(a).] Let  $f:  \Om \mapsto \mathbb R $   be a $ C^1 $ scalar function.  Then  $f$ is  helical symmetric if and only if $$\nabla_{\vec{\xi}} f\equiv0.$$
\item [(b).] Let $\mathbf{v}: \Om\mapsto \mathbb R^3 $  be a $C^1$ vector field .  Then  $\mathbf{v}=(v_1,v_2,v_3)^t$ is  helical symmetric if and only if $$\nabla_{\vec{\xi}} \mathbf{v}=(v_2,-v_1,0)^t.$$
	 
	\end{itemize}
 
\end{lemma}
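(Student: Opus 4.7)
The plan is to observe that $\theta \mapsto H_\theta(x)$ is exactly the integral curve of $\vec{\xi}$ through $x$, so helical symmetry of $f$ (respectively, of $\mathbf{v}$ twisted by $Q_\theta$) amounts to invariance along this flow. Once that is recognized, both claims follow by differentiating the defining identity at $\theta = 0$, and the converse is recovered by ODE uniqueness.

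First I would verify that for each fixed $x \in \Omega$, the curve $\gamma(\theta) := H_\theta(x)$ satisfies $\dot{\gamma}(\theta) = \vec{\xi}(\gamma(\theta))$ with $\gamma(0) = x$. This is a direct computation from the explicit formula for $H_\theta$: differentiating $(x_1\cos\theta + x_2\sin\theta,\ -x_1\sin\theta + x_2\cos\theta,\ x_3 + h\theta)^t$ in $\theta$ produces $(\gamma_2,\, -\gamma_1,\, h)^t$, which is exactly $\vec{\xi}$ at $\gamma(\theta)$. Part (a) is then immediate: if $f$ is helical, $f\circ\gamma$ is constant, so the chain rule gives $0 = (f\circ\gamma)'(\theta) = \nabla f(\gamma(\theta)) \cdot \vec{\xi}(\gamma(\theta)) = (\nabla_{\vec{\xi}} f)(\gamma(\theta))$, and evaluating at $\theta = 0$ yields $\nabla_{\vec{\xi}} f(x) = 0$. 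Conversely, if $\nabla_{\vec{\xi}} f \equiv 0$ everywhere, then $\frac{d}{d\theta} f(H_\theta(x)) \equiv 0$, so $f(H_\theta(x)) = f(x)$ for all $\theta$.

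For part (b) I would differentiate the identity $\mathbf{v}(H_\theta(x)) = Q_\theta \mathbf{v}(x)$ in $\theta$. The left-hand side becomes $\nabla_{\vec{\xi}} \mathbf{v}(H_\theta(x))$ by the chain rule and the flow identity. On the right-hand side, a direct computation shows $\dot{Q}_\theta = \dot{Q}_0 \, Q_\theta$, where the generator $\dot{Q}_0$ is the skew-symmetric matrix that sends $\mathbf{v}$ to $(v_2, -v_1, 0)^t$. Hence $\dot{Q}_\theta \mathbf{v}(x) = \dot{Q}_0 \, Q_\theta \mathbf{v}(x) = \dot{Q}_0 \, \mathbf{v}(H_\theta(x))$, so at every point $y$ on the orbit of $x$ we obtain $\nabla_{\vec{\xi}} \mathbf{v}(y) = (v_2(y), -v_1(y), 0)^t$. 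For the converse, I would set $g(\theta) := \mathbf{v}(H_\theta(x)) - Q_\theta \mathbf{v}(x)$; the hypothesis together with the flow identity then gives the linear ODE $g'(\theta) = \dot{Q}_0 \, g(\theta)$ with $g(0) = 0$, and uniqueness forces $g \equiv 0$.

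The main obstacle, modest as it is, is essentially bookkeeping: one must carefully exploit the relation $\dot{Q}_\theta = \dot{Q}_0 Q_\theta$ to transport the differentiated identity from the single point $x$ to its entire helical orbit, so that the infinitesimal characterization holds pointwise on $\Omega$ rather than merely along a distinguished curve. Apart from this, the argument reduces to the chain rule and uniqueness for a scalar or linear-system ODE.
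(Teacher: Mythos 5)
Your proof is correct. The paper itself does not supply an argument for this lemma but simply cites Claims 2.3 and 2.5 of \cite{ET}; your proof fills that gap with the standard flow-based argument, verifying that $\theta\mapsto H_\theta(x)$ is the integral curve of $\vec{\xi}$, differentiating the defining identities at $\theta$, and recovering the converse by ODE uniqueness. All the computations check out: $\dot{H}_\theta(x)=\vec{\xi}(H_\theta(x))$, $\dot Q_\theta=\dot Q_0 Q_\theta$ with $\dot Q_0\mathbf{v}=(v_2,-v_1,0)^t$, and the linear ODE for $g(\theta)=\mathbf{v}(H_\theta(x))-Q_\theta\mathbf{v}(x)$ forces $g\equiv 0$.

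One small remark: the ``transport'' step you flag as the main obstacle is actually superfluous for the forward direction. Since the base point $x$ already ranges over all of $\Omega$, you can simply evaluate the differentiated identity at $\theta=0$ to get $\nabla_{\vec{\xi}}\mathbf{v}(x)=(v_2,-v_1,0)^t(x)$ pointwise; there is no need to propagate along the orbit via $\dot Q_\theta=\dot Q_0 Q_\theta$. That relation is genuinely needed only in the converse, where it linearizes the ODE for $g$. This is a minor redundancy, not an error.
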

\begin{proof}
Proofs of $(a)$ and  $(b)$ are given in Claims 2.3 and 2.5 in \cite{ET} respectively and   we omit them here. 
\end{proof}

It follows from \cite{ET} that if  $ \mathbf{v} $ satisfies  \eqref{1-7}, then the associated vorticity field $ \mathbf{w} $ satisfies \eqref{1-8} and \eqref{1-9}. However, in case that $ \mathbf{v} $ does not satisfy \eqref{1-7}, then \eqref{1-9} fails. To deduce the 2D evolved equations of \eqref{1-2} without the assumption \eqref{1-7}, 
we define the \emph{orthogonal vector field} of $ \mathbf{v} $
\begin{equation}\label{orth field} 
\mathbf{u}=\mathbf{v}-\frac{v_{\vec{\xi}}\vec{\xi}}{|\vec{\xi}|^2} 
\end{equation}
and the vorticity field of $ \mathbf{u} $
\begin{equation*} 
\vec{\zeta}=\nabla\times \mathbf{u},
\end{equation*}
where the helical swirl $ v_{\vec{\xi}} $ of $ \mathbf{v} $ is defined by \eqref{1-21}.

We  have  properties of $ \mathbf{u}$, $\mathbf{v} $  and the  associated vorticity field $ \vec{\zeta},\mathbf{w}$ as follows.
\begin{lemma}\label{lm2-2}
Let $\mathbf{v}$ be a $C^1$vector field which is helical symmetric  and divergence-free.  Then there hold
 \begin{itemize}
 	\item [(a).]  
 	$v_{\vec{\xi}}$ is   helical symmetric.
 	\item [(b).]  
 	 $\mathbf{u}$ is   helical symmetric  and divergence-free.		
 	\item [(c).]  Let $\mathbf{w}=(w_1,w_2,w_3)^t $ and  $\vec{\zeta}=(\zeta_1,\zeta_2,\zeta_3)^t $ be the vorticity field of $ \mathbf{v} $ and $ \mathbf{u} $. 
 	Then 
 	\begin{equation}\label{2-1}
 		\mathbf{w}=\frac{w_3}{h}\vec{\xi}+\frac{1}{h}(\partial_{x_2}v_{\vec{\xi}}, -\partial_{x_1}v_{\vec{\xi}},0)^t,
 	\end{equation}
 and
 	\begin{equation}\label{2-2}
 		w_3=\zeta_3-\frac{2h^2 v_{\vec{\xi}}}{|\vec{\xi}|^4}-\frac{x_1 \partial_{x_1}v_{\vec{\xi}}+x_2\partial_{x_2}v_{\vec{\xi}}}{|\vec{\xi}|^2}.
 	\end{equation}
 \end{itemize}

\end{lemma}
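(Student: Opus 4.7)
\textbf{Proof proposal for Lemma \ref{lm2-2}.} The strategy is to reduce every claim to a direct computation using the criteria of Lemma \ref{lm2-1}, together with the three identities $\nabla_{\vec\xi}x_1=x_2$, $\nabla_{\vec\xi}x_2=-x_1$, $\nabla_{\vec\xi}x_3=h$ and the basic facts $\nabla\cdot\vec\xi=0$, $\nabla\times\vec\xi=(0,0,-2)^t$, which follow immediately from $\vec\xi=(x_2,-x_1,h)^t$.

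For (a), I will expand $v_{\vec\xi}=x_2 v_1-x_1 v_2+h v_3$ and apply the product rule for $\nabla_{\vec\xi}$. The coordinate derivatives combine with Lemma \ref{lm2-1}(b), which gives $\nabla_{\vec\xi}v_1=v_2$, $\nabla_{\vec\xi}v_2=-v_1$, $\nabla_{\vec\xi}v_3=0$, to produce a cancellation leaving $\nabla_{\vec\xi}v_{\vec\xi}=0$, so Lemma \ref{lm2-1}(a) identifies $v_{\vec\xi}$ as helical. For (b), writing $f:=v_{\vec\xi}/|\vec\xi|^2$, note $|\vec\xi|^2=h^2+x_1^2+x_2^2$ is helical (it is invariant under $H_\theta$), so $f$ is a helical scalar by (a). A direct check using $\nabla_{\vec\xi}\vec\xi=(\xi_2,-\xi_1,0)^t$ shows that $\vec\xi$ itself is a helical vector field in the sense of Lemma \ref{lm2-1}(b); combined with the product rule one gets that $f\vec\xi$ and hence $\mathbf u=\mathbf v-f\vec\xi$ is helical. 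For the divergence, since $\nabla\cdot\vec\xi=0$ and $\nabla f\cdot\vec\xi=\nabla_{\vec\xi}f=0$, one obtains $\nabla\cdot(f\vec\xi)=0$, and therefore $\nabla\cdot\mathbf u=\nabla\cdot\mathbf v=0$.

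For (c), identity \eqref{2-1} amounts to verifying componentwise that $hw_1=w_3 x_2+\partial_{x_2}v_{\vec\xi}$ and $hw_2=-w_3 x_1-\partial_{x_1}v_{\vec\xi}$ (the third component is tautological). I would expand $\partial_{x_j}v_{\vec\xi}$ using $v_{\vec\xi}=x_2 v_1-x_1 v_2+hv_3$, then recognize the combinations $x_2\partial_{x_1}v_k-x_1\partial_{x_2}v_k=\nabla_{\vec\xi}v_k-h\partial_{x_3}v_k$ and substitute $\nabla_{\vec\xi}v_1=v_2$, $\nabla_{\vec\xi}v_2=-v_1$; the remaining terms repackage precisely into $h(\partial_{x_2}v_3-\partial_{x_3}v_2)=hw_1$ and analogously for $w_2$. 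For \eqref{2-2}, I will use the decomposition $\mathbf w=\vec\zeta+\nabla\times(f\vec\xi)$ together with $\nabla\times(f\vec\xi)=\nabla f\times\vec\xi+f\,\nabla\times\vec\xi$ and $\nabla\times\vec\xi=(0,0,-2)^t$. Taking third components gives
\begin{equation*}
w_3=\zeta_3-x_1\partial_{x_1}f-x_2\partial_{x_2}f-2f.
\end{equation*}
Finally, substituting $\partial_{x_i}f=\partial_{x_i}v_{\vec\xi}/|\vec\xi|^2-2x_i v_{\vec\xi}/|\vec\xi|^4$ and collecting terms via the identity $2(x_1^2+x_2^2)/|\vec\xi|^4-2/|\vec\xi|^2=-2h^2/|\vec\xi|^4$ yields \eqref{2-2} exactly.

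The proof is essentially bookkeeping; the only step where one must be careful is the last algebraic reduction in (c), since one has to turn the bare $-2f$ term and the $r^2$-piece from $\nabla f$ into the clean $-2h^2 v_{\vec\xi}/|\vec\xi|^4$ coefficient using the relation $x_1^2+x_2^2=|\vec\xi|^2-h^2$. Throughout, the role of Lemma \ref{lm2-1} is to convert derivatives along $x_3$ (which are not available once we pass to the cross-section $D$) into derivatives in $(x_1,x_2)$, so the 2D reformulation in Theorem \ref{thm1} becomes consistent.
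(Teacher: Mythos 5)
Your proof is correct and takes essentially the same route as the paper: direct componentwise computation reduced to the helical criteria of Lemma~\ref{lm2-1} and the decomposition $\mathbf u=\mathbf v-\frac{v_{\vec\xi}\vec\xi}{|\vec\xi|^2}$. The paper is terser (deferring (a)--(b) to a cited reference and stating \eqref{2-2} as a ``direct computation''), whereas you fill in those details---in particular organizing \eqref{2-2} via $\nabla\times(f\vec\xi)=\nabla f\times\vec\xi+f\,\nabla\times\vec\xi$ with $\nabla\times\vec\xi=(0,0,-2)^t$---but the underlying argument is the same.
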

\begin{remark}
Note that if $ \mathbf{v} $ satisfies the orthogonality condition, i.e., $ v_{\vec{\xi}}\equiv 0, $ then \eqref{2-1} becomes $ \mathbf{w}=\frac{w_3}{h}\vec{\xi} $. This coincides with the property \eqref{1-8} in \cite{ET}.
\end{remark}
\begin{proof}
  
For $(a)$ and $(b)$, we refer to the proof of Lemma 2.5 in \cite{Jiu1}. Indeed, these can be directly verified by Lemma \ref{lm2-1} and the definition of $ \mathbf{u}$ and $\mathbf{v} $.
  
As for $(c)$, we first prove \eqref{2-1}.  By the definition of $\mathbf{u}$, we observe that   $\mathbf{u}\cdot \vec{\xi}=0$, which implies that
  \begin{equation}\label{u3}
   u_3=\frac{-x_2u_1+x_1u_2}{h}.
  \end{equation}
  We deduce from the above identity and the relation $\mathbf{u}=\mathbf{v}-\frac{v_{\vec{\xi}}\vec{\xi}}{|\vec{\xi}|^2}$ that 
  \begin{equation}\label{2-3}
  	\begin{split}
  		  	v_3=&u_3+\frac{v_{\vec{\xi}}h}{|\vec{\xi}|^2}\\
  		  	=&\frac{-x_2u_1+x_1u_2}{h}+\frac{v_{\vec{\xi}}h}{|\vec{\xi}|^2}\\
  		  	=&\frac{1}{h}\left[-x_2 \left( v_1-\frac{x_2v_{\vec{\xi}} }{|\vec{\xi}|^2}\right)+x_1 \left( v_2+\frac{x_1v_{\vec{\xi}} }{|\vec{\xi}|^2}\right)\right]+\frac{v_{\vec{\xi}}h}{|\vec{\xi}|^2}\\
  		  	=&\frac{-x_2v_1+x_1v_2}{h}+\frac{v_{\vec{\xi}}}{h}.
  	\end{split}
  \end{equation}

 Since  $\mathbf{v}$ is   helical symmetric,  using Lemma \ref{lm2-1}  we find 
   \begin{equation*}
   	\partial_{x_3}v_2=\frac{-x_2\partial_{x_1}v_2+x_1\partial_{x_2}v_2-v_1}{h}.
   \end{equation*}
   By direct calculations, we deduce from the above equation and \eqref{2-3} that 
      \begin{equation*} 
    	\begin{split}
    		w_1&=\partial_{x_2}v_3-\partial_{x_3}v_2\\
    		&=\partial_{x_2}\left(\frac{-x_2v_1+x_1v_2}{h}+\frac{v_{\vec{\xi}}}{h}\right)-\frac{-x_2\partial_{x_1}v_2+x_1\partial_{x_2}v_2-v_1}{h}\\
    		&=\frac{x_2}{h}(\partial_{x_1}v_2-\partial_{x_2}v_1)+\frac{\partial_{x_2} v_{\vec{\xi}}}{h}\\
            &=\frac{x_2}{h}w_3+\frac{\partial_{x_2} v_{\vec{\xi}}}{h}.
    	\end{split}
    \end{equation*}
    Similarly, one has
    \begin{equation*} 
    	\begin{split}
    		w_2 =\frac{-x_1}{h}w_3-\frac{\partial_{x_1} v_{\vec{\xi}}}{h}.
    	\end{split}
    \end{equation*}
\eqref{2-1} then follows from the above calculations immediately.  As for  \eqref{2-2},   using $\mathbf{u}=\mathbf{v}-\frac{v_{\vec{\xi}}\vec{\xi}}{|\vec{\xi}|^2}$ and the definition of vorticity field $  \vec{\zeta}$ and $\mathbf{w} $, one computes  directly that \eqref{2-2} holds. The proof of Lemma \ref{lm2-2} is complete.
\end{proof}

From \cite{ET}, we know that for a helical symmetric  and divergence-free vector field $ \mathbf{v} $ satisfying  \eqref{1-7},  the associated vorticity field $ \mathbf{w} $ is  a helical symmetric vector field. While the following lemma shows that, without   the assumption \eqref{1-7}, $ \mathbf{w} $ is still helical.
\begin{lemma}\label{lm2-3}
For a $C^2$ vector field $\mathbf{v} $, let $\mathbf{w} $ be the vorticity field of $ \mathbf{v} $.  If $\mathbf{v}$ is   helical symmetric  and divergence-free, then $\mathbf{w}$ is   helical symmetric.
	
\end{lemma}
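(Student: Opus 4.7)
The plan is to exploit the fact that the curl operator is covariant under the rigid motion $H_\theta$. Since $H_\theta$ is the composition of a proper rotation $Q_\theta\in SO(3)$ about the $x_3$-axis with a translation along the $x_3$-axis, and since the Levi-Civita symbol is $SO(3)$-invariant, one has the chain-rule identity
\begin{equation*}
(\nabla\times\mathbf{v})\bigl(H_\theta(x)\bigr) \;=\; Q_\theta\,\bigl(\nabla\times(Q_\theta^{-1}\,\mathbf{v}\circ H_\theta)\bigr)(x),\qquad\forall\,\theta\in\mathbb R,\ x\in\Om,
\end{equation*}
valid for any $C^2$ vector field $\mathbf{v}$ on $\Om$. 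I would verify this by a short coordinate computation: writing $\tilde{\mathbf{v}}(x)=Q_\theta^{-1}\mathbf{v}(H_\theta(x))$, applying the chain rule to compute $\partial_{x_j}\tilde v_i$, and contracting against $\epsilon_{mji}$, the algebra collapses to the stated identity via $\epsilon_{mji}(Q_\theta^{-1})_{ik}(Q_\theta)_{lj}=(Q_\theta^{-1})_{mn}\epsilon_{nlk}$, which is just the $\det Q_\theta=1$ incarnation of the $SO(3)$-invariance of $\epsilon$.

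Once the covariance identity is in hand, the conclusion is immediate. The helical symmetry of $\mathbf{v}$ is precisely the statement $Q_\theta^{-1}\,\mathbf{v}\circ H_\theta \equiv \mathbf{v}$, so the right-hand side of the identity reduces to $Q_\theta\,(\nabla\times\mathbf{v})(x)=Q_\theta\,\mathbf{w}(x)$. Therefore $\mathbf{w}(H_\theta(x))=Q_\theta\,\mathbf{w}(x)$ for every $\theta\in\mathbb R$ and $x\in\Om$, which is the defining condition for $\mathbf{w}$ to be helical.

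I anticipate no serious obstacle. The divergence-free hypothesis on $\mathbf{v}$ plays no role in this argument; only helicality and $C^2$ regularity are used. An entirely equivalent infinitesimal proof, obtained by differentiating helicality in $\theta$ at $\theta=0$, would instead invoke Lemma \ref{lm2-1}(b): starting from $\nabla_{\vec\xi}\mathbf{v}=(v_2,-v_1,0)^t$ together with the component formulas $w_i=\epsilon_{ijk}\partial_{x_j}v_k$, one combines the elementary commutators
\begin{equation*}
[\nabla_{\vec\xi},\partial_{x_1}]=\partial_{x_2},\qquad [\nabla_{\vec\xi},\partial_{x_2}]=-\partial_{x_1},\qquad [\nabla_{\vec\xi},\partial_{x_3}]=0
\end{equation*}
to verify $\nabla_{\vec\xi}\mathbf{w}=(w_2,-w_1,0)^t$, and then reapplies Lemma \ref{lm2-1}(b) in the converse direction. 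Either route is short; the main bookkeeping is just the $SO(3)$-invariance of the cross product, respectively of the three commutator relations above.
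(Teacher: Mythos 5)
Your proposal is correct, and it follows a genuinely different route from the paper's own argument. The paper proves the lemma by invoking the decomposition $\mathbf{w}=\frac{w_3}{h}\vec{\xi}+\frac{1}{h}(\partial_{x_2}v_{\vec{\xi}},\, -\partial_{x_1}v_{\vec{\xi}},\,0)^t$ from Lemma~\ref{lm2-2}(c), then computing $0=\nabla\cdot\mathbf{w}=h^{-1}\nabla_{\vec{\xi}}w_3$ to conclude $w_3$ is helical, and finally differentiating the decomposition along $\vec{\xi}$ to obtain $\nabla_{\vec{\xi}}w_1=w_2$ and $\nabla_{\vec{\xi}}w_2=-w_1$. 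Your main argument bypasses that machinery entirely by using the $SO(3)$-covariance of curl together with the translation-invariance of differential operators: once one knows $(\nabla\times\mathbf{v})\circ H_\theta=Q_\theta\,\nabla\times(Q_\theta^{-1}\mathbf{v}\circ H_\theta)$, helicality of $\mathbf{w}$ is a one-line consequence of helicality of $\mathbf{v}$. This is conceptually cleaner and makes transparent the point you note explicitly — that the divergence-free hypothesis plays no role; indeed the paper only needs it (if at all) to invoke Lemma~\ref{lm2-2} as stated, not in any calculation. Your alternative infinitesimal proof via the commutators $[\nabla_{\vec{\xi}},\partial_{x_1}]=\partial_{x_2}$, $[\nabla_{\vec{\xi}},\partial_{x_2}]=-\partial_{x_1}$, $[\nabla_{\vec{\xi}},\partial_{x_3}]=0$ is closer in spirit to the paper's use of $\nabla_{\vec{\xi}}$ and Lemma~\ref{lm2-1}(b), but is still shorter because it works directly with $w_i=\epsilon_{ijk}\partial_{x_j}v_k$ rather than passing through the helical-swirl decomposition. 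What the paper's route buys, by contrast, is that the formula $\eqref{2-1}$ it develops along the way is reused repeatedly in later sections (e.g.\ in deriving the 2D vorticity-stream system), so packaging the argument this way fits the paper's overall economy even if it is not the shortest possible proof of this particular lemma.
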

\begin{proof}
	In view of Lemma \ref{lm2-1}, it suffices to show that 
	\begin{equation}\label{2-4}
		 \nabla_{\vec{\xi}} \mathbf{w}=(w_2,-w_1,0)^t.
	\end{equation}
	 
	 By \eqref{2-1} and straightforward computations, we get 
	 \begin{align*}
	 	0&=\nabla\cdot \mathbf{w}=\partial_{x_1} w_1+\partial_{x_2} w_2+\partial_{x_3} w_3\\
	 	&=\partial_{x_1}\left(\frac{x_2w_3+\partial_{x_2}v_{\vec{\xi}}}{h} \right)+\partial_{x_1}\left(\frac{-x_1w_3+\partial_{x_1}v_{\vec{\xi}}}{h} \right)+\partial_{x_3} w_3\\
	 	&=\frac{x_2\partial_{x_1} w_3-x_1\partial_{x_2} w_3+ h\partial_{x_3} w_3}{h}=\frac{\nabla_{\vec{\xi}} w_3}{h}.
	 \end{align*}
Thus  $\nabla_{\vec{\xi}}  w_3=0$, which implies that $w_3$ is a helical symmetric function.
    
    Using \eqref{2-1} and the fact   $\nabla_{\vec{\xi}}  w_3=\nabla_{\vec{\xi}}  v_{\vec{\xi}}=0$, one computes directly that
     \begin{align*}
    	\nabla_{\vec{\xi}} w_1= \frac{1}{h}\left( x_2 \nabla_{\vec{\xi}}  w_3+ w_3 \nabla_{\vec{\xi}}  x_2+\nabla_{\vec{\xi}}\partial_{x_2} v_{\vec{\xi}} \right)= \frac{-x_1 w_3-\partial_{x_1} v_{\vec{\xi}}}{h}=w_2.
    \end{align*}
  Here we have used the identity $\partial_{x_2}(\nabla_{\vec{\xi}} v_{\vec{\xi}})=\nabla_{\vec{\xi}}\partial_{x_2} v_{\vec{\xi}}+\partial_{x_1} v_{\vec{\xi}}$. Similarly, we find $$\nabla_{\vec{\xi}} w_2=-w_1.$$ Thus \eqref{2-4} holds and $\mathbf{w}$ is   helical symmetric.
\end{proof}

Based on the above three lemmas, we are able to  introduce the  ``stream function'' for a divergence-free  helical symmetric vector field $ \mathbf{v} $ without   the assumption \eqref{1-7}. 
Let $ D $ be the cross-section of $ \Om $, which is a simply-connected bounded  domain with smooth boundary $ \partial D. $ We have
\begin{lemma}\label{lm2-4}
Let $\mathbf{v}  $ be a helical symmetric  and divergence-free vector field and $v_{\vec{\xi}} $ and $\mathbf{u}$  be defined by \eqref{1-21} and \eqref{orth field}.	    If in addition 
	   $$	\mathbf{v}\cdot \mathbf{n}=0,\ \  \text{on}\ \ \partial \Om,$$  where $ \mathbf{n} $ is the outward unit normal to $ \partial \Om$.  
	   Then there exists the unique function  $\varphi: D\mapsto \mathbb R$ such that 
	   \begin{equation}\label{2-5}
	   	 \begin{pmatrix}
	   	 	u_1(x_1,x_2,0) \\
	   	 	u_2(x_1,x_2,0)
	   	 \end{pmatrix}=\frac{1}{|\vec{\xi}|^2}\begin{pmatrix}
	   	- x_1x_2 &   h^2+x_1^2  \\
	   	 -(h^2+x_2^2 ) &  x_1x_2
   	 \end{pmatrix}  \begin{pmatrix}
   	 \partial_{x_1}\varphi(x_1,x_2) \\
   	 \partial_{x_2}\varphi(x_1,x_2)
    \end{pmatrix}, \  \ \ \forall\, (x_1,x_2)\in D 
	   \end{equation}
   and 
      \begin{equation}\label{2-5'}
   	   \varphi=0, \ \  \text{on}\ \ \partial D.
   \end{equation}

Moreover, let $\vec{\zeta}=(\zeta_1,\zeta_2,\zeta_3)^t $ be the vorticity of $ \mathbf{u} $. Then the following relation holds
	 \begin{equation}\label{2-6}
	 	 	\mathcal{L}_{K}\varphi(x_1,x_2)=\zeta_3(x_1,x_2,0), \  \ \ \forall\, (x_1,x_2)\in D,
	 \end{equation}
where   $ \mathcal{L}_{K}\varphi:=-\text{div}(K(x_1,x_2)\nabla\varphi) $ is a second order elliptic operator in divergence form with   the coefficient matrix
	\begin{equation*} 
		K(x_1,x_2)=\frac{1}{h^2+x_1^2+x_2^2}
		\begin{pmatrix}
			h^2+x_2^2 & -x_1x_2 \\
			-x_1x_2 &  h^2+x_1^2
		\end{pmatrix}.
	\end{equation*}
\end{lemma}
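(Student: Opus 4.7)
The plan is to construct $\varphi$ as the potential of a closed $1$-form on the simply-connected cross-section $D$, via the Poincar\'e lemma, and then derive \eqref{2-6} by a direct algebraic manipulation of \eqref{2-5}. Since $\mathbf{u}$ is helical and $\mathbf{u}\cdot\vec{\xi}\equiv 0$, its values on the slice $D\times\{0\}$ determine $\mathbf{u}$ everywhere (the third component is $u_3=(-x_2u_1+x_1u_2)/h$, as already observed in the proof of Lemma \ref{lm2-2}), so the entire construction can be carried out on $D$. The $2\times 2$ matrix appearing in \eqref{2-5} has determinant $h^2/|\vec{\xi}|^2\neq 0$, and inverting it turns \eqref{2-5} into an explicit formula expressing $\nabla\varphi$ in terms of $(u_1,u_2)$. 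The task is therefore to show that this prescription is the gradient of a single-valued function on $D$ which is constant on $\partial D$.

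Closedness is the first (and main) step. Writing $\omega$ for the $1$-form $(\partial_{x_1}\varphi)\,dx_1+(\partial_{x_2}\varphi)\,dx_2$ given by the inverted version of \eqref{2-5}, the compatibility condition $\partial_{x_2}(\partial_{x_1}\varphi)=\partial_{x_1}(\partial_{x_2}\varphi)$ unravels, after expanding, to a single pointwise identity involving first derivatives of $u_1$ and $u_2$ on $D$. I would verify this identity by combining $\nabla\cdot\mathbf{u}=0$ with helical symmetry: Lemma \ref{lm2-1}(b) gives $\nabla_{\vec{\xi}}u_3=0$, which expresses $\partial_{x_3}u_3$ in terms of in-plane derivatives of $u_3$; substituting $u_3=(-x_2u_1+x_1u_2)/h$ into this and then plugging it into $\partial_{x_1}u_1+\partial_{x_2}u_2=-\partial_{x_3}u_3$ produces precisely the required closedness relation. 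Since $D$ is simply connected, the Poincar\'e lemma then yields $\varphi\in C^2(D)$ (unique up to an additive constant) satisfying \eqref{2-5}.

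For the boundary condition, I would use that $\vec{\xi}$ is tangent to $\partial\Omega$ (since $\partial\Omega$ is $\mathcal{H}_h$-invariant), so $\vec{\xi}\cdot\mathbf{n}=0$ and hence $\mathbf{u}\cdot\mathbf{n}=\mathbf{v}\cdot\mathbf{n}=0$ on $\partial\Omega$. Because $\partial\Omega$ is the helical orbit of $\partial D\times\{0\}$, its tangent space at a point $(x_1,x_2,0)$ with $(x_1,x_2)\in\partial D$ is spanned by $\vec{\xi}$ and $\mathbf{t}_D=(t_1,t_2,0)^t$, where $\mathbf{t}_D$ is the planar tangent to $\partial D$; thus $\mathbf{n}$ is proportional to $\mathbf{t}_D\times\vec{\xi}=(ht_2,-ht_1,-t_1x_1-t_2x_2)^t$. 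A short computation, inserting $u_3=(-x_2u_1+x_1u_2)/h$ and \eqref{2-5}, collapses $\mathbf{u}\cdot\mathbf{n}=0$ to $\nabla\varphi\cdot \mathbf{t}_D=0$ on $\partial D$. Since $D$ is simply connected, $\partial D$ is connected, so $\varphi$ is constant on $\partial D$ and fixing the additive constant gives \eqref{2-5'} together with uniqueness.

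Finally, for the identity \eqref{2-6}, I would observe that a direct re-arrangement of \eqref{2-5} shows $(u_2,-u_1)^t=-K\nabla\varphi$, since the $2\times 2$ matrix on the right-hand side equals minus the coefficient matrix $K$ in \eqref{1-10}. Consequently
\begin{equation*}
\zeta_3(x_1,x_2,0)=\partial_{x_1}u_2-\partial_{x_2}u_1=-\partial_{x_1}(K\nabla\varphi)_1-\partial_{x_2}(K\nabla\varphi)_2=-\mathrm{div}(K\nabla\varphi)=\mathcal{L}_K\varphi.
\end{equation*}
The main technical obstacle I foresee is the bookkeeping in the closedness step: teasing the correct pointwise identity out of the 3D divergence-free condition after eliminating $\partial_{x_3}$-derivatives via helical symmetry is a careful but essentially mechanical calculation; once it is in hand, the remaining steps are routine.
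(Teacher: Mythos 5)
Your proposal is correct and takes essentially the same route as the paper's proof: both extract closedness of the same $1$-form from $\nabla\cdot\mathbf{u}=0$ together with $\nabla_{\vec{\xi}}u_3=0$ and $u_3=(-x_2u_1+x_1u_2)/h$, both reduce the boundary condition to $\nabla^\perp\varphi\cdot(n_1,n_2)=0$ on $\partial D$ using $\vec{\xi}\cdot\mathbf{n}=0$, and both obtain \eqref{2-6} by rewriting \eqref{2-5} as $(u_2,-u_1)^t=-K\nabla\varphi$ and taking a divergence. The only differences are cosmetic (Poincar\'e lemma versus an explicit stream function, tangent-vector versus normal-vector bookkeeping on $\partial D$).
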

\begin{proof}
	Since  $\mathbf{v} $ is   helical symmetric  and divergence-free, from Lemma \ref{lm2-2} we get that  $\mathbf{u} $ is also  helical symmetric  and divergence-free. Using identities  $\mathbf{u}\cdot \vec{\xi}=0$ and   $\nabla_{\vec{\xi}} u_3=0$, direct calculation gives 
	$$ 0=\partial_{x_1} u_1+\partial_{x_2} u_2+\partial_{x_3} u_3=\frac{1}{h^2}\partial_{x_1}  \left[(h^2+x_2^2)u_1-x_1x_2u_2\right]+\frac{1}{h^2}\partial_{x_2}  \left[(h^2+x_1^2)u_2-x_1x_2u_1\right].$$ Thus we can define a function  $\varphi: D\mapsto \mathbb R$ such that $$\partial_{x_2} \varphi(x_1,x_2)=\frac{1}{h^2}\left[(h^2+x_2^2)u_1(x_1,x_2,0)-x_1x_2u_2(x_1,x_2,0)\right]$$ and $$\partial_{x_1} \varphi(x_1,x_2)=\frac{1}{h^2}  \left[(h^2+x_1^2)u_2(x_1,x_2,0)-x_1x_2u_1(x_1,x_2,0)\right],$$ which implies \eqref{2-5}.   \eqref{2-6} follows immediately from  \eqref{2-5} and the definition of $ \zeta_3 $, that is,  $\zeta_3=\partial_{x_1} u_2-\partial_{x_2} u_1$.
	
	Next, we verify the boundary condition for $\varphi$. Let $ \mathbf{n}=(n_1,n_2,n_3)^t $ be the outward unit normal to $ \partial \Om$ at the point $(x_1,x_2,0)\in \partial \Om$. Then the helical symmetry of $\Om$ implies that $\vec{\xi}\cdot \mathbf{n}=0$ (See (2.62) in \cite{ET}). That is, $$n_3=\frac{-x_2 n_1+x_1n_2}{h}.$$
	Therefore, by using \eqref{2-5}, we obtain 
	\begin{align*}
		0&=	\mathbf{v}\cdot \mathbf{n}=\left(\mathbf{u} + \frac{v_{\vec{\xi}}\vec{\xi}}{|\vec{\xi}|^2}\right)\cdot \mathbf{n}=\mathbf{u}\cdot \mathbf{n}=u_1n_1+u_2n_2+\frac{-x_2 u_1+x_1u_2}{h}\cdot\frac{-x_2 n_1+x_1n_2}{h}\\
		&=\partial_{x_2}\varphi n_1-\partial_{x_2}\varphi n_2,
	\end{align*} 
	 which implies that for some constant $ C $, $$\varphi\equiv C \ \  \text{on}\ \ \partial D.$$  
	 Without loss of generality, we may take $\varphi\equiv 0 \ \  \text{on}\ \ \partial D.$ The uniqueness of $ \varphi $ follows from the maximum principle of elliptic equations. The proof   is thus finished.
\end{proof}

From the definition of helical functions, we get the following property of $ \varphi $.
\begin{lemma} \label{lm2-5}
For any helical symmetric function $g: \Om\mapsto \mathbb R$, the following identity holds
	\begin{equation*}
		\left (\mathbf{v} \cdot \nabla g\right )(x_1,x_2,0)  = \nabla^\perp\varphi(x_1,x_2)\cdot\nabla g(x_1,x_2,0),\ \ \ \ \ (x_1,x_2)\in D.
	\end{equation*}
\end{lemma}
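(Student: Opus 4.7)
The plan is to reduce the three-dimensional inner product $\mathbf{v}\cdot\nabla g$ at points of the form $(x_1,x_2,0)$ to a purely two-dimensional expression involving only $\varphi$ and $g$, by systematically using (i) the helical symmetry of $g$ to eliminate the $x_3$-derivative, and (ii) the decomposition $\mathbf{v}=\mathbf{u}+\frac{v_{\vec{\xi}}\vec{\xi}}{|\vec{\xi}|^2}$ together with the relations already established for $\mathbf{u}$ in Lemma \ref{lm2-4}.

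First I would observe that because $g$ is helical, Lemma \ref{lm2-1}(a) gives $\nabla_{\vec{\xi}}g=\vec{\xi}\cdot\nabla g=0$, so the ``parallel part'' of $\mathbf{v}$ contributes nothing:
\begin{equation*}
\mathbf{v}\cdot\nabla g=\mathbf{u}\cdot\nabla g+\frac{v_{\vec{\xi}}}{|\vec{\xi}|^2}(\vec{\xi}\cdot\nabla g)=\mathbf{u}\cdot\nabla g.
\end{equation*}
Next, the same relation $\nabla_{\vec\xi}g=0$ rewrites as $\partial_{x_3}g=(x_1\partial_{x_2}g-x_2\partial_{x_1}g)/h$, while $\mathbf{u}\cdot\vec{\xi}=0$ gives $u_3=(-x_2u_1+x_1u_2)/h$ as in \eqref{u3}. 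Substituting both into $\mathbf{u}\cdot\nabla g=u_1\partial_{x_1}g+u_2\partial_{x_2}g+u_3\partial_{x_3}g$ and collecting terms yields
\begin{equation*}
\mathbf{u}\cdot\nabla g=\frac{\partial_{x_1}g\,[(h^2+x_2^2)u_1-x_1x_2u_2]+\partial_{x_2}g\,[(h^2+x_1^2)u_2-x_1x_2u_1]}{h^2}.
\end{equation*}

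Now I would plug in the explicit formulas \eqref{2-5} for $u_1(x_1,x_2,0),u_2(x_1,x_2,0)$ in terms of $\partial_{x_1}\varphi,\partial_{x_2}\varphi$. The key algebraic identity that makes everything collapse is
\begin{equation*}
(h^2+x_1^2)(h^2+x_2^2)-x_1^2x_2^2=h^2(h^2+x_1^2+x_2^2)=h^2|\vec{\xi}|^2,
\end{equation*}
which implies $(h^2+x_2^2)u_1-x_1x_2u_2=h^2\partial_{x_2}\varphi$ and $(h^2+x_1^2)u_2-x_1x_2u_1=-h^2\partial_{x_1}\varphi$ at $(x_1,x_2,0)$. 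Inserting these identities, the $h^2$ factors cancel and we obtain exactly
\begin{equation*}
(\mathbf{v}\cdot\nabla g)(x_1,x_2,0)=\partial_{x_2}\varphi\,\partial_{x_1}g-\partial_{x_1}\varphi\,\partial_{x_2}g=\nabla^{\perp}\varphi\cdot\nabla g,
\end{equation*}
which is the claimed identity.

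The proof is essentially a direct computation; the only non-routine step is recognizing the correct algebraic grouping so that the $2\times2$ matrix appearing in \eqref{2-5} produces the Jacobian pairing $\nabla^{\perp}\varphi\cdot\nabla g$. Once one sees that the coefficient determinant equals $h^2|\vec{\xi}|^2$, the rest is arithmetic, so no genuine analytical obstacle is expected.
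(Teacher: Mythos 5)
Your proof is correct and follows essentially the same route as the paper's: reduce $\mathbf{v}\cdot\nabla g$ to $\mathbf{u}\cdot\nabla g$, eliminate $u_3$ and $\partial_{x_3}g$ via $\mathbf{u}\cdot\vec{\xi}=0$ and $\nabla_{\vec{\xi}}g=0$, substitute \eqref{2-5}, and invoke the determinant identity $(h^2+x_1^2)(h^2+x_2^2)-x_1^2x_2^2=h^2|\vec{\xi}|^2$. The paper packages the algebra as a triple matrix product collapsing to the rotation $\begin{pmatrix}0&-1\\1&0\end{pmatrix}$, while you write the same cancellations term-by-term, but the argument is identical in substance.
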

\begin{proof}
	Since $g$ is  helical symmetric, using \eqref{orth field} and Lemma \ref{lm2-1} we obtain
	$$	\mathbf{v}  \cdot \nabla g=\left(\mathbf{u}+\frac{v_{\vec{\xi}}\vec{\xi}}{|\vec{\xi}|^2}\right)  \cdot \nabla g=	\mathbf{u}  \cdot \nabla g.$$ 
By $\mathbf{u}\cdot \vec{\xi}=0$ and $ \nabla_{\vec{\xi}} g=0 $, we have
\begin{equation*} 
u_3=\frac{ -x_2u_1+x_1u_2}{h},
\end{equation*}
and
\begin{equation*} 
\partial_{x_3} g=\frac{-x_2\partial_{x_1} g+x_1\partial_{x_2} g}{h}.
\end{equation*}
Combining the above equalities with \eqref{2-5}, one computes directly that 
\begin{equation*} 
\begin{split}
\mathbf{u}  \cdot \nabla g=&(u_1(x_1,x_2,0),u_2(x_1,x_2,0))\begin{pmatrix}
1+\frac{x_2^2}{h^2} &   -\frac{x_1x_2}{h^2}  \\
-\frac{x_1x_2}{h^2} & 1+\frac{x_1^2}{h^2}
\end{pmatrix}  \begin{pmatrix}
\partial_{x_1}g(x_1,x_2,0) \\
\partial_{x_2}g(x_1,x_2,0)
\end{pmatrix}\\
=&\frac{1}{|\vec{\xi}|^2}\left (\partial_{x_1}\varphi(x_1,x_2), \partial_{x_2}\varphi(x_1,x_2)\right )\begin{pmatrix}
- x_1x_2 &  -(h^2+x_2^2 ) \\ 
 h^2+x_1^2  &  x_1x_2
\end{pmatrix}  \begin{pmatrix}
1+\frac{x_2^2}{h^2} &   -\frac{x_1x_2}{h^2}  \\
-\frac{x_1x_2}{h^2} & 1+\frac{x_1^2}{h^2}
\end{pmatrix}  \begin{pmatrix}
\partial_{x_1}g(x_1,x_2,0) \\
\partial_{x_2}g(x_1,x_2,0)
\end{pmatrix}\\
=&\left (\partial_{x_1}\varphi(x_1,x_2), \partial_{x_2}\varphi(x_1,x_2)\right )\begin{pmatrix}
0 &  -1 \\ 
1  & 0
\end{pmatrix}   \begin{pmatrix}
\partial_{x_1}g(x_1,x_2,0) \\
\partial_{x_2}g(x_1,x_2,0)
\end{pmatrix}\\
=&\partial_{x_2} \varphi  (x_1,x_2)  \partial_{x_1} g(x_1,x_2,0)-\partial_{x_1} \varphi  (x_1,x_2)  \partial_{x_2} g(x_1,x_2,0)\\
=&\nabla^\perp\varphi(x_1,x_2)\cdot\nabla g(x_1,x_2,0).
\end{split}
\end{equation*}
\end{proof}

Based on Lemmas \ref{lm2-1}-\ref{lm2-5}, we now deduce the 2D evolved equations of helical solutions to \eqref{1-2} without the assumption \eqref{1-7}. Using \eqref{1-2} and Lemma \ref{2-2}, we first obtain the evolved equations of helical swirl $ v_{\vec{\xi}} $ and the third component $ w_3 $ of the vorticity field $ \mathbf{w} $ as follows. 
\begin{lemma} \label{lm2-6}
Let  $(\mathbf{v},p)$ be a  helical symmetric solution pair of the Euler equations \eqref{1-1} in  $\Om$, 
  $v_{\vec{\xi}} $ be defined by \eqref{1-21} and $\mathbf{w}=(w_1,w_2,w_3)^t $ be the vorticity field of $ \mathbf{v} $. Then $v_{\vec{\xi}}$ and $w_3$ satisfy the following transport system  
	\begin{equation}\label{2-7'}
		\begin{cases}
			\partial_t v_{\vec{\xi}}+\mathbf{v}\cdot \nabla  v_{\vec{\xi}}=0,&\text{in}\ \   \Om\times(0,T),\\
			\partial_t w_3+\mathbf{v}\cdot \nabla  w_3= \frac{1}{h}\left(\partial_{x_2} v_{\vec{\xi}}\partial_{x_1} v_3- \partial_{x_1} v_{\vec{\xi}}\partial_{x_2} v_3\right),\ &\text{in}\ \   \Om\times(0,T).
		\end{cases}
	\end{equation}
\end{lemma}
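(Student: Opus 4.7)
\smallskip

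My plan is to derive both equations from the primitive system, using the helical structure (via Lemma \ref{lm2-1}) to kill pressure and stretching contributions along $\vec{\xi}$, and to use the vorticity representation \eqref{2-1} in Lemma \ref{lm2-2} to eliminate $w_1,w_2$ from the third component of the vorticity equation.

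\smallskip

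\textbf{Step 1: the transport equation for $v_{\vec{\xi}}$.} I would compute $\partial_t v_{\vec{\xi}}+\mathbf{v}\cdot\nabla v_{\vec{\xi}}$ directly from the definition $v_{\vec{\xi}}=x_2 v_1-x_1 v_2+h v_3$ and the Euler momentum equation $\partial_t v_i+\mathbf{v}\cdot\nabla v_i=-\partial_{x_i}P$. Expanding by the product rule, the two cross terms $v_2 v_1$ and $-v_1 v_2$ from $\mathbf{v}\cdot\nabla(x_2 v_1-x_1 v_2)$ cancel, so
\[
\partial_t v_{\vec{\xi}}+\mathbf{v}\cdot\nabla v_{\vec{\xi}}=-\bigl(x_2\partial_{x_1}P-x_1\partial_{x_2}P+h\partial_{x_3}P\bigr)=-\nabla_{\vec{\xi}}P.
\]
Since $(\mathbf{v},P)$ is a helical solution pair, $P$ is a helical scalar function, and Lemma \ref{lm2-1}(a) gives $\nabla_{\vec{\xi}}P\equiv 0$. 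This yields the first equation of \eqref{2-7'}.

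\smallskip

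\textbf{Step 2: the equation for $w_3$.} Taking the third component of the vorticity equation \eqref{1-2} gives
\[
\partial_t w_3+\mathbf{v}\cdot\nabla w_3=w_1\partial_{x_1}v_3+w_2\partial_{x_2}v_3+w_3\partial_{x_3}v_3.
\]
Now I would substitute the expressions from \eqref{2-1}, namely $w_1=\tfrac{1}{h}(x_2 w_3+\partial_{x_2}v_{\vec{\xi}})$ and $w_2=\tfrac{1}{h}(-x_1 w_3-\partial_{x_1}v_{\vec{\xi}})$, into the right-hand side. Grouping the $w_3$-terms and the $v_{\vec{\xi}}$-terms separately, the right-hand side rearranges to
\[
\frac{w_3}{h}\bigl(x_2\partial_{x_1}v_3-x_1\partial_{x_2}v_3+h\partial_{x_3}v_3\bigr)+\frac{1}{h}\bigl(\partial_{x_2}v_{\vec{\xi}}\,\partial_{x_1}v_3-\partial_{x_1}v_{\vec{\xi}}\,\partial_{x_2}v_3\bigr)=\frac{w_3}{h}\,\nabla_{\vec{\xi}}v_3+\frac{1}{h}\bigl(\partial_{x_2}v_{\vec{\xi}}\,\partial_{x_1}v_3-\partial_{x_1}v_{\vec{\xi}}\,\partial_{x_2}v_3\bigr).
\]
Since $\mathbf{v}$ is a helical vector field, Lemma \ref{lm2-1}(b) yields $\nabla_{\vec{\xi}}\mathbf{v}=(v_2,-v_1,0)^t$, so in particular $\nabla_{\vec{\xi}}v_3=0$. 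The first term drops out and the claimed second equation of \eqref{2-7'} follows.

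\smallskip

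\textbf{Difficulties.} There is no real obstacle: both identities reduce to clean algebra once one exploits the helical structure through Lemma \ref{lm2-1} (to eliminate $\nabla_{\vec{\xi}}P$ and $\nabla_{\vec{\xi}}v_3$) and Lemma \ref{lm2-2}(c) (to rewrite $w_1,w_2$ in terms of $w_3$ and $v_{\vec{\xi}}$). The only place that requires mild care is the bookkeeping in Step 2, where one must correctly separate the $w_3$-contributions (which assemble into $\nabla_{\vec{\xi}}v_3$ and vanish) from the $v_{\vec{\xi}}$-contributions (which produce the Jacobian-type source term on the right-hand side).
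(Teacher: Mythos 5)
Your proof is correct and follows essentially the same route as the paper: for the $w_3$-equation you substitute the representation \eqref{2-1} of $w_1,w_2$ into the third component of the vorticity equation and use $\nabla_{\vec{\xi}}v_3=0$, which is exactly what the authors do. The only difference is in Step 1, where the paper simply cites Lemma 2.7 of \cite{ET} for the transport of $v_{\vec{\xi}}$, while you give the short self-contained derivation from the momentum equation (cancellation of the cross terms and $\nabla_{\vec{\xi}}P=0$ by helical symmetry of $P$); this is the same computation that the cited lemma encapsulates.
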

\begin{proof}
The equation for $v_{\vec{\xi}}$ was derived directly  by Lemma 2.7 in \cite{ET}. 
For $ w_3 $, by \eqref{1-2} and Lemma \ref{lm2-2}, we get
\begin{equation}\label{2-7} 
\begin{split}
\partial_t w_3+\mathbf{v}\cdot \nabla  w_3=   \mathbf{w}\cdot \nabla  v_3 
= \left( \frac{w_3}{h}\vec{\xi}+\frac{1}{h}(\partial_{x_2}v_{\vec{\xi}}, -\partial_{x_1}v_{\vec{\xi}},0)\right) \cdot \nabla  v_3.
\end{split}
\end{equation}
Since $ \mathbf{v} $ is helical, we have $\nabla_{\vec{\xi}} v_3=\vec{\xi}\cdot \nabla  v_3=0$. Taking this into \eqref{2-7}, we get the equation for $w_3$.
\end{proof}

We observe that both $v_{\vec{\xi}}$ and $w_3$ are helical functions in $ \Omega $, which are uniquely determined by their values on the cross-section $ D $.  If we denote $ v$ and $w $ the corresponding constraint functions of $v_{\vec{\xi}}$ and $w_3$ on   $ D $, i.e.,  $v(x_1,x_2,t):=v_{\vec{\xi}}(x_1,x_2,0,t)$ and $w(x_1,x_2,t):=w_3(x_1,x_2,0,t)$ for any $ (x_1,x_2)\in D $, then  according to Lemma \ref{lm2-6}, it is possible to obtain  equations satisfied by $ (v,w) $. Indeed, taking Lemmas \ref{lm2-1}-\ref{lm2-5} into Lemma \ref{lm2-6}, we get a 2D evolved model of $v $ and $w $, which is the  vorticity-stream formulation of helical solutions to \eqref{1-2} without the assumption \eqref{1-7}.
\begin{proposition}\label{pp2-7}
Let  $(\mathbf{v},p)$ be a  helical symmetric solution pair of incompressible Euler equations \eqref{1-1} in  $\Om$, $ D $ be the cross-section of $ \Om $,  $v_{\vec{\xi}} $ be defined by \eqref{1-21} and $\mathbf{w}=(w_1,w_2,w_3)^t $ be the vorticity field of $ \mathbf{v} $. Let $v(x_1,x_2,t) =v_{\vec{\xi}}(x_1,x_2,0,t)$ and $w(x_1,x_2,t) =w_3(x_1,x_2,0,t)$ for any $ (x_1,x_2)\in D $. Then $(v,w)$ satisfies the following 2D evolved system
	\begin{equation}\label{2-9}
		\begin{cases}
			\partial_t v+\nabla^\perp\varphi\cdot \nabla v=0,\ \ &\text{in}\ \   D\times(0,T),\\
			\partial_t w+\nabla^\perp\varphi\cdot \nabla w=\partial_{x_1} v \partial_{x_2}\left(\frac{x_1\partial_{x_1}\varphi+x_2\partial_{x_2}\varphi}{|{\vec{\xi}}|^2}\right)\\\qquad \qquad -\partial_{x_2} v \partial_{x_1}\left(\frac{x_1\partial_{x_1}\varphi+x_2\partial_{x_2}\varphi}{|{\vec{\xi}}|^2}\right)
			+\frac{2 v \left( x_2\partial_{x_1} v -x_1\partial_{x_2} v \right)}{|{\vec{\xi}}|^4},\ \ &\text{in}\ \   D\times(0,T),\\
			\mathcal{L}_{K}\varphi=w+\frac{2 h^2 v }{|{\vec{\xi}}|^4}+\frac{ x_1\partial_{x_1} v +x_2\partial_{x_2} v }{|{\vec{\xi}}|^2},\ \ &\text{in}\ \  D\times(0,T),\\
			\varphi=0,\ \ &\text{on}\ \   \partial D\times(0,T),\\
		w(\cdot,0)=w_{3}((\cdot,0),0),\ \  v(\cdot,0)=v_{\vec{\xi}}((\cdot,0),0), & \text{in}\ \   D.
		\end{cases}
	\end{equation}
\end{proposition}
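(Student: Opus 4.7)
The plan is to derive the system \eqref{2-9} by restricting the 3D transport equations from Lemma \ref{lm2-6} to the cross-section $x_3=0$ and then rewriting each term using the stream-function apparatus built in Lemmas \ref{lm2-1}--\ref{lm2-5}. Since $\mathbf{v}$ is helical, divergence-free, and satisfies $\mathbf{v}\cdot\mathbf{n}=0$ on $\partial\Omega$, Lemma \ref{lm2-4} furnishes a unique stream function $\varphi$ on $D$ with $\varphi=0$ on $\partial D$ and the matrix identity \eqref{2-5}; this also produces the boundary and elliptic part of the target system via \eqref{2-6}.

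\textbf{Equation for $v$.} By Lemma \ref{lm2-2}(a), $v_{\vec{\xi}}$ is helical, so Lemma \ref{lm2-5} with $g=v_{\vec{\xi}}$ gives
\begin{equation*}
(\mathbf{v}\cdot\nabla v_{\vec{\xi}})(x_1,x_2,0)=\nabla^\perp\varphi(x_1,x_2)\cdot\nabla v(x_1,x_2).
\end{equation*}
Restricting the first identity of \eqref{2-7'} to $x_3=0$ immediately yields $\partial_t v+\nabla^\perp\varphi\cdot\nabla v=0$.

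\textbf{Equation for $\varphi$.} From Lemma \ref{lm2-2}(c), formula \eqref{2-2} gives $\zeta_3=w_3+\frac{2h^2 v_{\vec{\xi}}}{|\vec{\xi}|^4}+\frac{x_1\partial_{x_1}v_{\vec{\xi}}+x_2\partial_{x_2}v_{\vec{\xi}}}{|\vec{\xi}|^2}$. Combining this with $\mathcal L_K\varphi=\zeta_3|_{x_3=0}$ from \eqref{2-6} and evaluating at $x_3=0$ delivers the third equation of \eqref{2-9}.

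\textbf{Equation for $w$.} Apply Lemma \ref{lm2-5} with $g=w_3$ (helical by Lemma \ref{lm2-3}) to rewrite the transport operator on the LHS of the second equation of \eqref{2-7'} as $\partial_t w+\nabla^\perp\varphi\cdot\nabla w$. For the RHS $\tfrac1h(\partial_{x_2}v_{\vec{\xi}}\partial_{x_1}v_3-\partial_{x_1}v_{\vec{\xi}}\partial_{x_2}v_3)$, I express $v_3$ at $x_3=0$ via the decomposition $\mathbf{v}=\mathbf{u}+\frac{v_{\vec{\xi}}\vec{\xi}}{|\vec{\xi}|^2}$ and the identity $u_3=\frac{-x_2 u_1+x_1 u_2}{h}$ from \eqref{u3}. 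Substituting the representation \eqref{2-5} of $(u_1,u_2)$ in terms of $\nabla\varphi$, a direct algebraic simplification (the cross terms $x_1 x_2$ cancel) gives
\begin{equation*}
v_3(x_1,x_2,0)=-h\,\Psi(x_1,x_2)+\frac{h\,v(x_1,x_2)}{|\vec{\xi}|^2},\qquad \Psi:=\frac{x_1\partial_{x_1}\varphi+x_2\partial_{x_2}\varphi}{|\vec{\xi}|^2}.
\end{equation*}
Differentiating and using $\partial_{x_j}(1/|\vec{\xi}|^2)=-2x_j/|\vec{\xi}|^4$, one obtains $\partial_{x_j}v_3=-h\partial_{x_j}\Psi+h\partial_{x_j}v/|\vec{\xi}|^2-2hvx_j/|\vec{\xi}|^4$. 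Plugging this into the Jacobian $\partial_{x_2}v\,\partial_{x_1}v_3-\partial_{x_1}v\,\partial_{x_2}v_3$, the symmetric middle contributions $\partial_{x_1}v\,\partial_{x_2}v/|\vec{\xi}|^2$ cancel, leaving precisely the RHS of the second equation of \eqref{2-9}.

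\textbf{Main obstacle.} The computation is conceptually straightforward once Lemmas \ref{lm2-1}--\ref{lm2-5} are in hand; the only delicate point is the $w$-equation, where one must keep careful track of the three contributions to $\partial_{x_j}v_3$ and verify that exactly the symmetric pair cancels, producing the clean form involving $\Psi$ and the lower-order term $2v(x_2\partial_{x_1}v-x_1\partial_{x_2}v)/|\vec{\xi}|^4$. All other steps are direct applications of the preceding lemmas, and the initial data follow tautologically from the definitions of $v$ and $w$.
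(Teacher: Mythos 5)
Your proposal is correct and follows essentially the same route as the paper: it invokes Lemma \ref{lm2-6} for the transport equations, Lemmas \ref{lm2-3} and \ref{lm2-5} to convert $\mathbf{v}\cdot\nabla$ into $\nabla^\perp\varphi\cdot\nabla$ on the cross-section, the identity $u_3(x_1,x_2,0)=-h\Psi$ from \eqref{2-5} and \eqref{u3} to express $v_3$, and \eqref{2-2} with \eqref{2-6} for the elliptic equation. The expansion of $\partial_{x_j}v_3$ and the cancellation you track are exactly the computation carried out in the paper's proof.
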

\begin{proof}
	Let $\mathbf{u}$ 
	be the orthogonal vector field of $ \mathbf{v} $ defined by \eqref{orth field} and  $\varphi$ be the function given by Lemma \ref{lm2-4}.  Using  $\mathbf{u}\cdot \vec{\xi}=0$ and \eqref{2-5},   one computes directly that  
	\begin{equation}\label{2-8}
		u_3(x_1,x_2,0)=-\frac{h}{|{\vec{\xi}}|^2}\left(x_1\partial_{x_1}\varphi(x_1,x_2)+x_2\partial_{x_2}\varphi(x_1,x_2)\right).
	\end{equation}

For $ v $, it follows from  Lemma \ref{lm2-5} and the first equation of \eqref{2-7'} in Lemma \ref{lm2-6} that
\begin{equation*} 
0=\partial_t v +\mathbf{v}\cdot \nabla  v =\partial_t v + \nabla^\perp\varphi\cdot \nabla v.
\end{equation*}
So the first equation of \eqref{2-9} holds.

For $ w $, by Lemmas \ref{lm2-3} and \ref{lm2-5}, we obtain
\begin{equation*} 
\begin{split}
\partial_t w +\mathbf{v}\cdot \nabla  w =\partial_t w + \nabla^\perp\varphi\cdot \nabla w.
\end{split}
\end{equation*}
By \eqref{2-8}, we have
\begin{equation*} 
\begin{split}
&\frac{1}{h}\left(\partial_{x_2} v \partial_{x_1} v_3- \partial_{x_1} v \partial_{x_2} v_3\right)\\
=&\frac{1}{h}\left(\partial_{x_2} v\partial_{x_1} \left( -\frac{h}{|{\vec{\xi}}|^2}\left(x_1\partial_{x_1}\varphi +x_2\partial_{x_2}\varphi \right)+\frac{v h}{|\vec{\xi}|^2}\right) -\partial_{x_1} v\partial_{x_2} \left( -\frac{h}{|{\vec{\xi}}|^2}\left(x_1\partial_{x_1}\varphi +x_2\partial_{x_2}\varphi \right)+\frac{v h}{|\vec{\xi}|^2}\right) \right) \\
=&\partial_{x_1} v \partial_{x_2}\left(\frac{x_1\partial_{x_1}\varphi+x_2\partial_{x_2}\varphi}{|{\vec{\xi}}|^2}\right) -\partial_{x_2} v \partial_{x_1}\left(\frac{x_1\partial_{x_1}\varphi+x_2\partial_{x_2}\varphi}{|{\vec{\xi}}|^2}\right)
+\frac{2 v \left( x_2\partial_{x_1} v -x_1\partial_{x_2} v \right)}{|{\vec{\xi}}|^4}.
\end{split}
\end{equation*}
Combining the above two equalities with  the second equation of \eqref{2-7'} in Lemma \ref{lm2-6}, we get the second equation of \eqref{2-9}.

Finally, using \eqref{2-2} in Lemma \ref{lm2-2} and \eqref{2-6}, we can deduce the third equation of \eqref{2-9}. The proof is thus complete.
\end{proof}
\begin{remark}
We give a remark that if the orthogonality condition \eqref{1-7} holds, then $ v(x_1,x_2,t)\equiv0 $ in \eqref{2-9} and we immediately get the 2D model \eqref{1-9} from \eqref{2-9}. 
\end{remark}
Indeed, we can recover helical solution pairs $  (\mathbf{v},\mathbf{w}) $ of vorticity equations \eqref{1-2} without the orthogonality condition \eqref{1-7} by solution pairs $(v,w)$  of the system \eqref{2-9}. This implies that there is a one-to-one correspondence between the solution of the 2D  equations  \eqref{2-9} and the helical symmetric solution of \eqref{1-2} with swirl.   We have
\begin{proposition}\label{pp2-8}
Suppose that  $(v,w)$ is   a   solution pair  of the system \eqref{2-9} in $D\subset \mathbb R^2$. Then one can recover a helical vector field $ \mathbf{v} $ and its helical vorticity field $\mathbf{w} $  in $ \Om=\{x=(x_1,x_2,x_3)^t\in\mathbb R^3\ |\ R_{x_3/h}\left((x_1,x_2)^t\right)\in D\} $ 
	such that $(\mathbf{v}, \mathbf{w}) $ solves vorticity equations \eqref{1-2}.
\end{proposition}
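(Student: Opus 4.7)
The plan is to reverse-engineer the derivation of Theorem \ref{thm1}, exploiting the principle that a helical vector field on $\Omega$ is uniquely determined by its restriction to the cross-section $D$, and that the whole 3D Euler system, being helically invariant, is likewise determined by its trace on the slice $\{x_3=0\}$. First I would set $\mathbf{v}(x_1,x_2,x_3):=Q_{x_3/h}\tilde{\mathbf{v}}(R_{-x_3/h}(x_1,x_2))$ and $\mathbf{w}$ as in the formula of Theorem \ref{thm1}; from the definition it is immediate, via the group law $Q_{\theta+x_3/h}=Q_\theta Q_{x_3/h}$ and $R_{-x_3/h-\theta}\circ R_\theta=R_{-x_3/h}$, that $\mathbf{v}(H_\theta x)=Q_\theta\mathbf{v}(x)$, so $\mathbf{v}$ is helical, and in particular $\mathbf{v}\cdot\mathbf{n}=0$ on $\partial\Omega$ follows from $\varphi=0$ on $\partial D$ by the same boundary algebra used in Lemma \ref{lm2-4}, read backwards.

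Next I would verify that $\mathbf{v}$ is divergence-free. Since $\nabla\cdot\mathbf{v}$ is a helical scalar, it suffices to check the identity at $x_3=0$, where $\mathbf{v}=\tilde{\mathbf{v}}$ and the derivative $\partial_{x_3}v_3$ can be exchanged for tangential derivatives through Lemma \ref{lm2-1}(b). The prescribed matrix identity $(\tilde u_1,\tilde u_2)^t=|\vec\xi|^{-2}M(x_1,x_2)\nabla\varphi$ is built exactly so that the resulting 2D expression for $\mathrm{div}\,\tilde{\mathbf v}$ vanishes (this is the same computation as in Lemma \ref{lm2-4}, reversed), while the contribution from the swirl piece $v\vec\xi/|\vec\xi|^2$ is eliminated because $\nabla\cdot(f\vec\xi)=\nabla_{\vec\xi}f$ for any helical $f$, and $v$ is helical.

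For the vorticity, I would simply compute $\mathbf{w}=\nabla\times\mathbf v$. Because $\mathbf v$ is helical and divergence-free, Lemma \ref{lm2-3} gives that $\mathbf w$ is helical, and Lemma \ref{lm2-2}(c) yields both the decomposition $\mathbf w=\frac{w_3}{h}\vec\xi+\frac{1}{h}(\partial_{x_2}v_{\vec\xi},-\partial_{x_1}v_{\vec\xi},0)^t$ and the identity $w_3=\zeta_3-\frac{2h^2v_{\vec\xi}}{|\vec\xi|^4}-\frac{x_1\partial_{x_1}v_{\vec\xi}+x_2\partial_{x_2}v_{\vec\xi}}{|\vec\xi|^2}$, where $\zeta_3$ is the vertical vorticity of the orthogonal part $\mathbf{u}$. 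By Lemma \ref{lm2-4}, $\zeta_3=\mathcal L_K\varphi$ on the slice, and the third equation of \eqref{2-9} then forces $w_3|_{x_3=0}=w$; helical extension recovers precisely the formula for $\mathbf{w}$ stated in Theorem \ref{thm1}, confirming that our candidate field agrees with $\nabla\times\mathbf v$.

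Finally, I would verify the vorticity equation $\partial_t\mathbf w+(\mathbf v\cdot\nabla)\mathbf w=(\mathbf w\cdot\nabla)\mathbf v$. Each of these three terms is a helical vector field, so the equation holds on $\Omega$ if and only if it holds on $\{x_3=0\}$. Using the decomposition of Lemma \ref{lm2-2}(c), checking it on this slice is equivalent to a pair of scalar equations for the trace of $v_{\vec\xi}$ and $w_3$, which are exactly the system \eqref{2-7'} of Lemma \ref{lm2-6}. I would then retrace the derivation of Proposition \ref{pp2-7}, step by step: Lemma \ref{lm2-5} is an equality, the expressions $\tilde u_3=-|\vec\xi|^{-2}h(x_1\partial_{x_1}\varphi+x_2\partial_{x_2}\varphi)$ and $v_3=\tilde u_3+vh/|\vec\xi|^2$ are reversible, and so the first two equations of \eqref{2-9} translate back termwise into \eqref{2-7'}. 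I expect the main obstacle to be precisely this bookkeeping in the backward direction: one must check that every algebraic manipulation used to go from \eqref{2-7'} to \eqref{2-9} is an equivalence under the helical hypothesis (rather than a one-way deduction), so that starting from a solution of \eqref{2-9} one really closes up to a full solution of \eqref{1-2}; once this equivalence is in hand, the proposition follows.
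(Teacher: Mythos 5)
Your construction of $\mathbf{u}$, $\mathbf{v}$ and $\mathbf{w}$, the helicity and divergence-free checks, and the identification $w_3|_{x_3=0}=w$ via \eqref{2-2}, \eqref{2-6} and the third equation of \eqref{2-9} all coincide with the paper's argument. The gap is in the last step. You assert that, because each term of $\partial_t\mathbf{w}+(\mathbf{v}\cdot\nabla)\mathbf{w}-(\mathbf{w}\cdot\nabla)\mathbf{v}$ is helical and $\mathbf{w}$ admits the decomposition of Lemma \ref{lm2-2}(c), the three-component vorticity equation is ``equivalent to'' the pair of scalar equations \eqref{2-7'} for $v_{\vec{\xi}}$ and $w_3$. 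That equivalence is precisely the nontrivial content of the proposition, not a formal consequence of the decomposition: Lemma \ref{lm2-6} only gives the one-way implication (vorticity equation $\Rightarrow$ \eqref{2-7'}), and knowing that $w_3$ and $v_{\vec{\xi}}$ satisfy \eqref{2-7'} does not by itself tell you that the first and second components $w_1=\frac{1}{h}(x_2w_3+\partial_{x_2}v_{\vec{\xi}})$ and $w_2=\frac{1}{h}(-x_1w_3-\partial_{x_1}v_{\vec{\xi}})$ satisfy their own evolution equations, since these involve the extra derivatives $\partial_{x_i}v_{\vec{\xi}}$.

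The paper closes this by differentiating the transport equation for $v_{\vec{\xi}}$ in $x_i$ (its \eqref{2-11}) and then computing $h\bigl(\partial_t w_1+(\mathbf{v}\cdot\nabla)w_1-(\mathbf{w}\cdot\nabla)v_1\bigr)$ directly, using \eqref{2-1}, the identities $\nabla_{\vec{\xi}}v_3=0$, $\nabla_{\vec{\xi}}v_1=v_2$, $\nabla_{\vec{\xi}}v_{\vec{\xi}}=0$, and crucially $\nabla\cdot\mathbf{v}=0$, to show the expression vanishes (equations \eqref{2-12}--\eqref{2-15}); the $w_2$ component is analogous. Your closing paragraph does flag that the backward direction needs care, but you locate the difficulty in reversing the passage from \eqref{2-7'} to \eqref{2-9}, which is indeed just reversible bookkeeping; the real work is the passage from the two scalar equations back up to the full three-dimensional system \eqref{1-2}, and that step requires the additional computation above rather than a symmetry argument.
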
 
\begin{proof}
Let $(v,w)$ be  a   solution pair  of the system \eqref{2-9} in $ D $ and $\varphi$ be the associated stream function. We first recover $ \mathbf{v} $ and $ \mathbf{w} $. From Lemmas \ref{lm2-1}-\ref{lm2-4}, we define a vector field $ \tilde{\mathbf{u}}=(\tilde{u}_1,\tilde{u}_2,\tilde{u}_3)^t $ in $ D $ such that  $\tilde{u}_1$ and $\tilde{u}_2$ satisfy  \eqref{2-5} and $\tilde{u}_3$  satisfies \eqref{u3}, i.e.,
\begin{equation*} 
   	 \begin{pmatrix}
\tilde{u}_1(x_1,x_2) \\
\tilde{u}_2(x_1,x_2)
\end{pmatrix}=\frac{1}{|\vec{\xi}|^2}\begin{pmatrix}
- x_1x_2 &   h^2+x_1^2  \\
-(h^2+x_2^2 ) &  x_1x_2
\end{pmatrix}  \begin{pmatrix}
\partial_{x_1}\varphi(x_1,x_2) \\
\partial_{x_2}\varphi(x_1,x_2)
\end{pmatrix}, 
\end{equation*}
\begin{equation*} 
\tilde{u}_3(x_1,x_2)=\frac{-x_2\tilde{u}_1(x_1,x_2)+x_1\tilde{u}_2(x_1,x_2)}{h}.
\end{equation*}
We then extend $\tilde{\mathbf{u}}$  to a helical symmetric vector field $ \mathbf{u} $ in $ \Om $  naturally by letting 
$$\mathbf{u}(x',x_3)=Q_{x_3/h}\tilde{\mathbf{u}}\left( R_{-x_3/h} x'\right),\ \ \  \ \  x'=(x_1,x_2)^t, \ \ \  (x',x_3)\in\Om.$$
So $ \mathbf{u} $ is a helical vector field which satisfies $ \mathbf{u}\cdot \vec{\xi}=0 $. By \eqref{2-5} and \eqref{u3}, one can check that $ \mathbf{u} $ is divergence-free, i.e., $ \nabla\cdot \mathbf{u}=0 $. 

Now, we  extend $v$  to a helical symmetric function $ v_{\vec{\xi}} $ in $ \Om $  by letting 
$$v_{\vec{\xi}}(x',x_3)=v\left( R_{-x_3/h} x'\right),\ \ \  \ \  x'=(x_1,x_2)^t, \ \ \  (x',x_3)\in\Om $$ and 
set 
\begin{equation}\label{2-10} 
\mathbf{v}=\mathbf{u}+\frac{v_{\vec{\xi}}\vec{\xi}}{|\vec{\xi}|^2},\ \ \ \ \text{in}\ \Om.
\end{equation}
Then $ \mathbf{v} $ is a helical vector field  and  $ \nabla\cdot \mathbf{v}=0 $. Moreover, the helical swirl of $  \mathbf{v} $ is $ v_{\vec{\xi}}. $ Define $ \mathbf{w}=(w_1,w_2,w_3)^t=\nabla\times \mathbf{v} $ the vorticity field of $  \mathbf{v}  $ and $ \vec{\zeta}=(\zeta_1,\zeta_2,\zeta_3)^t=\nabla\times \mathbf{u} $ the vorticity field of $  \mathbf{u}  $. From Lemma \ref{lm2-3}, $ \mathbf{w} $ is helical.

We need to figure out the relation between $ \mathbf{w} $ that $ w $. Indeed, it follows from \eqref{2-1}, \eqref{2-2}, \eqref{2-6} and the third equation of \eqref{2-9} that  for $ (x_1,x_2)\in D $
\begin{equation*}
\begin{split}
w_3(x_1,x_2,0)=\zeta_3(x_1,x_2,0)-\frac{2h^2 v(x_1,x_2)}{|\vec{\xi}|^4}-\frac{x_1 \partial_{x_1}v(x_1,x_2)+x_2\partial_{x_2}v(x_1,x_2)}{|\vec{\xi}|^2}=w(x_1,x_2).
\end{split}
\end{equation*}
This implies that $ w $ is the constraint function of the third component $ w_3 $ of $ \mathbf{w} $ on the cross-section $ D $.

Finally, we prove that $ \mathbf{v} $ defined by \eqref{2-10} and its vorticity field $ \mathbf{w} $ satisfy vorticity equations \eqref{1-2}, i.e., $ \partial_t \mathbf{w}+(\mathbf{v}\cdot\nabla)\mathbf{w}=(\mathbf{w}\cdot\nabla)\mathbf{v} $.
For $ w_3 $, it follows from the second equation of \eqref{2-9} and former deductions that the second equation of \eqref{2-7'} holds. So by \eqref{2-7}, there holds
\begin{equation}\label{2-10'} 
\partial_t w_3+(\mathbf{v}\cdot\nabla)w_3=(\mathbf{w}\cdot\nabla)v_3.
\end{equation}

For $ w_2 $, taking the derivative of the first equation of \eqref{2-7'} with respect to $ x_i $, we get
\begin{equation}\label{2-11}
\partial_t \left (\partial_{x_i}v_{\vec{\xi}}\right )+\left( \mathbf{v}\cdot \nabla \right)  \left (\partial_{x_i}v_{\vec{\xi}}\right )+\left (\partial_{x_i}\mathbf{v}\cdot \nabla \right ) v_{\vec{\xi}}=0.
\end{equation}
Here $ \left (\partial_{x_i}\mathbf{v}\cdot \nabla \right ) v_{\vec{\xi}}:=\sum_{j=1}^{3}\partial_{x_i}v_j\partial_{x_j}v_{\vec{\xi}} $. So by \eqref{2-1}, \eqref{2-10'} and \eqref{2-11}, we have
\begin{equation}\label{2-12}
\begin{split}
&h\left( \partial_t w_1+\left (\mathbf{v}\cdot\nabla\right )w_1-\left (\mathbf{w}\cdot\nabla\right )v_1\right)\\
=&\partial_t\left (x_2 w_3+\partial_{x_2}v_{\vec{\xi}}\right )+\left (\mathbf{v}\cdot\nabla\right )\left (x_2 w_3+\partial_{x_2}v_{\vec{\xi}}\right )-h\left (\mathbf{w}\cdot\nabla\right )v_1\\
=&x_2\partial_t w_3+\partial_t\left (\partial_{x_2}v_{\vec{\xi}}\right )+x_2\left (\mathbf{v}\cdot\nabla\right )w_3 + w_3v_2+\left (\mathbf{v}\cdot\nabla\right )\left (\partial_{x_2}v_{\vec{\xi}}\right )-h\left (\mathbf{w}\cdot\nabla\right )v_1\\
=&x_2\left (\mathbf{w}\cdot\nabla\right )v_3-\left (\partial_{x_2}\mathbf{v}\cdot \nabla \right ) v_{\vec{\xi}}+ w_3v_2-h\left (\mathbf{w}\cdot\nabla\right )v_1.
\end{split}
\end{equation} 
Since  $ \mathbf{v} $ and $ v_{\vec{\xi}} $ are helical symmetric, we have
$ \nabla_{\vec{\xi}}v_3=0, \nabla_{\vec{\xi}}v_1=v_2, \nabla_{\vec{\xi}}v_{\vec{\xi}}=0. $ Combining these with \eqref{2-1}, we obtain
\begin{equation}\label{2-13}
x_2\left (\mathbf{w}\cdot\nabla\right )v_3=\frac{x_2}{h}\left(\partial_{x_2}v_{\vec{\xi}}\partial_{x_1}v_3-\partial_{x_1}v_{\vec{\xi}}\partial_{x_2}v_3 \right) 
\end{equation}
and
\begin{equation}\label{2-14}
h\left (\mathbf{w}\cdot\nabla\right )v_1=w_3v_2+ \partial_{x_2}v_{\vec{\xi}}\partial_{x_1}v_1-\partial_{x_1}v_{\vec{\xi}}\partial_{x_2}v_1.
\end{equation}
Taking \eqref{2-13}, \eqref{2-14} into \eqref{2-12} and using the fact that $ \nabla\cdot \mathbf{v}=0 $, $ \partial_{x_3}v_3=\frac{1}{h}\left(-x_2\partial_{x_1}v_3+x_1\partial_{x_2}v_3 \right) $ and  $ \partial_{x_3}v_{\vec{\xi}}=\frac{1}{h}\left(-x_2\partial_{x_1}v_{\vec{\xi}}+x_1\partial_{x_2}v_{\vec{\xi}} \right)  $, we get
\begin{equation}\label{2-15}
\begin{split}
&h\left( \partial_t w_1+\left (\mathbf{v}\cdot\nabla\right )w_1-\left (\mathbf{w}\cdot\nabla\right )v_1\right)\\
=&\frac{x_2}{h}\left(\partial_{x_2}v_{\vec{\xi}}\partial_{x_1}v_3-\partial_{x_1}v_{\vec{\xi}}\partial_{x_2}v_3 \right)-\left( \partial_{x_2}v_1\partial_{x_1}v_{\vec{\xi}}+\partial_{x_2}v_2\partial_{x_2}v_{\vec{\xi}}+\partial_{x_2}v_3\partial_{x_3}v_{\vec{\xi}} \right)\\
& +w_3v_2-\left(w_3v_2+ \partial_{x_2}v_{\vec{\xi}}\partial_{x_1}v_1-\partial_{x_1}v_{\vec{\xi}}\partial_{x_2}v_1 \right) \\
=&\frac{x_2}{h}\left(\partial_{x_2}v_{\vec{\xi}}\partial_{x_1}v_3-\partial_{x_1}v_{\vec{\xi}}\partial_{x_2}v_3 \right)+\partial_{x_3}v_3\partial_{x_2}v_{\vec{\xi}}-\partial_{x_2}v_3\partial_{x_3}v_{\vec{\xi}}\\
=&\frac{x_2}{h} \partial_{x_2}v_{\vec{\xi}}\partial_{x_1}v_3+\frac{1}{h}\left(-x_2\partial_{x_1}v_3+x_1\partial_{x_2}v_3 \right)\partial_{x_2}v_{\vec{\xi}}-\frac{x_1}{h}\partial_{x_2}v_3\partial_{x_2}v_{\vec{\xi}}=0.
\end{split}
\end{equation}
Thus, we have $ \partial_t w_1+\left (\mathbf{v}\cdot\nabla\right )w_1-\left (\mathbf{w}\cdot\nabla\right )v_1=0. $ Similarly, we can get $ \partial_t w_2+\left (\mathbf{v}\cdot\nabla\right )w_2-\left (\mathbf{w}\cdot\nabla\right )v_2=0. $

To conclude,  $\mathbf{v}$ defined by \eqref{2-10} and its vorticity field $\mathbf{w}$ are helical vector  fields satisfying vorticity equations \eqref{1-2}. The proof is thus complete. 
\end{proof}

\textbf{Proof of Theorem \ref{thm1}:}
Theorem \ref{thm1} follows immediately from Propositions \ref{pp2-7} and \ref{pp2-8}.

\section{Elliptic equation for traveling-rotating helical vortices}\label{sec3}

Let $\bar \alpha$ be a fixed constant. For traveling-rotating  helical vortices of \eqref{1-2}, it suffices to find rotating invariant  solutions to \eqref{2-9} being of the following form:
\begin{equation}\label{3-1}
	w(x,t)=W(R_{\bar \alpha t}x),\ \ \  v(x,t)=V(R_{\bar \alpha t}x),\ \ \ \varphi(x,t)=\Phi(R_{\bar \alpha t}x). 
\end{equation}
Note that for a general domain $ D $ that may not be rotationally invariant, $\bar \alpha $ has to be 0. While when $D$ is rotationally invariant such as  a disk, an annulus or the whole space $\mathbb R^2$,  we can take   $\bar \alpha $  to be any real number. 

For a function $	f(x,t)=F(y)=F(R_{\bar \alpha t}x)$, straightforward computation gives the following identities:
\begin{equation*}
	\begin{split}
		\partial_t f(x,t)=-\bar \alpha (R_{\bar \alpha t}x)^\perp\cdot (\nabla_y F)(R_{\bar \alpha t}x)&= 	\nabla_y^\perp\left( -\frac{\bar \alpha}{2}|y|^2\right)\cdot \nabla_y F(y),\ \ \ y= R_{\bar \alpha t}x,\\
	\nabla_x 	f(x,t)=	&R_{\bar \alpha t} (\nabla_y F)(R_{\bar \alpha t}x).
	\end{split}
\end{equation*}
In what follows, we will use change of variable $x\mapsto y=R_{\bar \alpha t}x$ and still denote $y$ as $x$ for simplicity of notations. Substituting \eqref{3-1} into the first equation of \eqref{2-9}, we have
\begin{equation}\label{3-2}
	\nabla^\perp\left(\Phi(x)-\frac{\bar \alpha}{2}|x|^2\right)\cdot \nabla V(x)= 0. 
\end{equation}
 So if there exists a function $F_1$ such that 
\begin{equation}\label{3-3}
	V(x)=F_1\left(\Phi(x)-\frac{\bar \alpha}{2}|x|^2\right), 
\end{equation}  
then \eqref{3-2} holds.

Substituting \eqref{3-1} into the second equation in \eqref{2-9}, we obtain 
\begin{equation}\label{3-4}
	\nabla^\perp\left(\Phi -\frac{\bar \alpha}{2}|x|^2\right)\cdot \nabla W =\partial_{x_2} V\partial_{x_1} U_3-\partial_{x_1} V\partial_{x_2} U_3+\frac{2 V \left( x_2\partial_{x_1} V -x_1\partial_{x_2} V\right)}{|{\vec{\xi}}|^4},
\end{equation} 
where $U_3(x)=-\frac{x_1\partial_{x_1}\Phi(x)+x_2\partial_{x_2}\Phi(x)}{|{\vec{\xi}}|^2}$. From \eqref{3-3}, we observe that 
\begin{equation*}\begin{split}
		&\partial_{x_2} V\partial_{x_1} U_3-\partial_{x_1} V\partial_{x_2} U_3\\
		=& F_1'\left(\Phi(x)-\frac{\bar \alpha}{2}|x|^2\right)\left(\nabla^\perp\left(\Phi -\frac{\bar \alpha}{2}|x|^2\right)\cdot \nabla U_3 \right)\\
		=&\nabla^\perp\left(\Phi -\frac{\bar \alpha}{2}|x|^2\right)\cdot \nabla \left(F_1'\left(\Phi(x)-\frac{\bar \alpha}{2}|x|^2\right) U_3\right),
	\end{split}
\end{equation*}
and 
\begin{equation*} 
	\frac{2 V \left( x_2\partial_{x_1} V -x_1\partial_{x_2} V\right)}{|{\vec{\xi}}|^4} =\nabla^\perp\left(\Phi -\frac{\bar \alpha}{2}|x|^2\right)\cdot \nabla\left( \frac{F_1\left(\Phi(x)-\frac{\bar \alpha}{2}|x|^2\right) F_1'\left(\Phi(x)-\frac{\bar \alpha}{2}|x|^2\right) }{|{\vec{\xi}}|^2}\right). 
\end{equation*}
Taking the above two identities into \eqref{3-4}, we get 
\begin{equation}\label{3-5}
	\nabla^\perp\left(\Phi -\frac{\bar \alpha}{2}|x|^2\right)\cdot \nabla \left(W- F_1'\left(\Phi(x)-\frac{\bar \alpha}{2}|x|^2\right) U_3- \frac{F_1\left(\Phi(x)-\frac{\bar \alpha}{2}|x|^2\right) F_1'\left(\Phi(x)-\frac{\bar \alpha}{2}|x|^2\right) }{|{\vec{\xi}}|^2}\right)=0.
\end{equation} 
So if there exists a function $F_2$  such that
\begin{equation}\label{3-6}
	W(x)= F_1'\left(\Phi(x)-\frac{\bar \alpha}{2}|x|^2\right) U_3(x)+ \frac{F_1\left(\Phi(x)-\frac{\bar \alpha}{2}|x|^2\right) F_1'\left(\Phi(x)-\frac{\bar \alpha}{2}|x|^2\right) }{|{\vec{\xi}}|^2}+F_2\left(\Phi(x)-\frac{\bar \alpha}{2}|x|^2\right),
\end{equation} 
then \eqref{3-5} holds.

Now,  substituting \eqref{3-1} into the third equation of \eqref{2-9}, we obtain
\begin{equation}\label{3-7}
	\mathcal{L}_{K}\Phi=W+\frac{2 h^2 V }{|{\vec{\xi}}|^4}+\frac{ x_1\partial_{x_1} V +x_2\partial_{x_2} V}{|{\vec{\xi}}|^2}.
\end{equation} 
Taking $U_3(x)=-\frac{x_1\partial_{x_1}\Phi(x)+x_2\partial_{x_2}\Phi(x)}{|{\vec{\xi}}|^2}$, \eqref{def of xi}, \eqref{3-3} and \eqref{3-6} into  \eqref{3-7}, we deduce that 
\begin{equation}\label{3-8}
	\begin{split}
		\mathcal{L}_{K}\Phi(x)
		&=\frac{2 h^2    F_1\left(\Phi(x)-\frac{\bar \alpha}{2}|x|^2\right)}{(h^2+x_1^2+x_2^2)^2}+\frac{F_1\left(\Phi(x)-\frac{\bar \alpha}{2}|x|^2\right) F_1'\left(\Phi(x)-\frac{\bar \alpha}{2}|x|^2\right) }{h^2+x_1^2+x_2^2}\\
		&\quad -\frac{\bar \alpha |x|^2 F_1'\left(\Phi(x)-\frac{\bar \alpha}{2}|x|^2\right) }{h^2+x_1^2+x_2^2}+F_2\left(\Phi(x)-\frac{\bar \alpha}{2}|x|^2\right), \ \  \ x\in D.
	\end{split}
\end{equation} 

Therefore, we conclude that for rotating invariant solutions of \eqref{2-9} with angular velocity $ \bar{\alpha} $, it suffices to solve a non-autonomous semilinear elliptic equation 
for $ \Phi $:
\begin{equation}\label{3-9} 
\begin{cases}
\mathcal{L}_{K}\Phi=\frac{2h^2F_1\left(\Phi-\frac{\bar\alpha}{2}|x|^2 \right)}{(h^2+|x|^2)^2}+\frac{F_1\left(\Phi-\frac{\bar\alpha}{2}|x|^2 \right)F_1'\left(\Phi-\frac{\bar\alpha}{2}|x|^2 \right)}{h^2+|x|^2}-\frac{\bar\alpha |x|^2 F_1'\left(\Phi-\frac{\bar\alpha}{2}|x|^2 \right)}{h^2+|x|^2}+F_2\left(\Phi-\frac{\bar\alpha}{2}|x|^2 \right),\ \ &\text{in}\ \  D,\\
\Phi=0,\ \ &\text{on}\ \   \partial D,\\
\end{cases}
\end{equation}
where profile functions $F_1$ and $F_2$ can be chosen arbitrary. Let $ \zeta =\mathcal{L}_{K}\Phi $ and $\mathcal{G}_K $ be the Green's operator of the elliptic operator $ \mathcal{L}_{K} $ in $ D $ with Dirichlet condition (see Lemma \ref{lm4-1} in section 4).   Then $ \Phi=\mathcal{G}_K\zeta $ and \eqref{3-9} is equivalent to the following equations for $ \zeta $
\begin{equation}\label{3-10} 
\begin{split}
\zeta=&\frac{2h^2F_1\left(\mathcal{G}_K\zeta-\frac{\bar\alpha}{2}|x|^2 \right)}{(h^2+|x|^2)^2}+\frac{F_1\left(\mathcal{G}_K\zeta-\frac{\bar\alpha}{2}|x|^2 \right)F_1'\left(\mathcal{G}_K\zeta-\frac{\bar\alpha}{2}|x|^2 \right)}{h^2+|x|^2}\\
&-\frac{\bar\alpha |x|^2 F_1'\left(\mathcal{G}_K\zeta-\frac{\bar\alpha}{2}|x|^2 \right)}{h^2+|x|^2}+F_2\left(\mathcal{G}_K\zeta-\frac{\bar\alpha}{2}|x|^2 \right),\ \  \text{in}\ \  D.
\end{split}
\end{equation}
Solutions of  \eqref{3-10} correspond to traveling-rotating helical vortices of \eqref{1-2}. By  constructing   families of concentrated solutions to \eqref{3-10} with the proper choice of $ F_1 $ and $ F_2 $, 
 we will finally construct  families  of concentrated helical vortices with non-zero helical swirl in infinite cylinders, whose vorticity field $ \textbf{w} $ tends to a helical vortex filament  \eqref{1007} evolved by the binormal curvature flow as $ \varepsilon\to0 $.
\section{Construction of traveling-rotating helical vortices in an infinite pipe}\label{sec4}
As stated above,  we will prove the existence of concentrated solutions $ \zeta_\varepsilon $ of \eqref{3-10} for suitably selected $ F_1 $ and $ F_2 $, such that the corresponding helical vorticity field $ \mathbf{w}_\varepsilon $ satisfies \eqref{1-2} in an infinite cylinder without the assumption \eqref{1-7} and  concentrates near the helix \eqref{1007} in topological and distributional sense. In what follows, we always assume $h>0$ and $D=B_R(0)\in\mathbb R^2$ for some  $R>0$.

\subsection{Green's function of $\mathcal{L}_{K}$} 
To construct concentrated solutions of \eqref{3-10}, the first key point is the asymptotic expansion of Green's function of $\mathcal{L}_{K}$. Since the Green's operator $ \mathcal{G}_K $ in \eqref{3-10} is the same as that in \eqref{1-9}, we give some properties of the existence and decomposition of Green's function $ G_K(x,y) $ for the operator $\mathcal{L}_{K}$ in bounded domains, which has been already found in the previous literature \cite{CW2}. 
 
 \begin{lemma}[Theorem 1.3, \cite{CW2}]\label{lm4-1}
 	 Let $U\subset \mathbb R^2$ be a bounded domain with smooth boundary. Let $ \Gamma(x)=-\frac{1}{2\pi}\ln|x|$ be the fundamental solution of the Laplacian $ -\Delta $ in $ \mathbb{R}^2 $ and $ T_x $ be a $ C^\infty $ positive-definite matrix-valued function determined by $ K $ satisfying
 	 \begin{equation}\label{matrix T}
 	 	T_x^{-1}(T_x^{-1})^{t}=K(x)\ \ \ \ \ \ \forall x\in U.
 	 \end{equation}
Then there exists a function $G_K(x,y)$ defined on $U\times U$ such that for any $f\in L^q(U)$ with $q>2$, the solution of the following linear problem
 	  \begin{equation}\label{4-1}
 	 	\begin{cases}
 	 		\mathcal{L}_Ku=-\mathrm{div}(K(x)\nabla u)= f,\ \ &x\in U,\\
 	 		u=0,\ \ &x\in\partial U,
 	 	\end{cases}
 	 \end{equation}
  can be represented by 
     	\begin{equation}\label{4-2}
     	\begin{split}
     		u(x)=\mathcal{G}_Kf(x)=\int_{U}G_K(x,y)f(y)\mathrm d y,\ \ \ \  \ \ x\in U.
     	\end{split}
     \end{equation}
      Moreover, $G_K(x,y)$ has the following decomposition
      \begin{equation}\label{4-3}
      	G_K(x, y)=\frac{\sqrt{\det K(x)}^{-1}+\sqrt{\det K(y)}^{-1}}{2}\Gamma\left (\frac{T_x+T_y}{2}(x-y)\right )+H_0(x,y),
      \end{equation}
      where $H_0(x,y)\in C_{loc}^{0, \gamma}(  U\times U)$ for some $\gamma\in (0,1)$ and $$H_0(x,y)\leq C, \ \ \ \forall\, x,y\in U $$ for some constant $C$.
 \end{lemma}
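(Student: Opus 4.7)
The plan is to construct $G_K$ by the classical parametrix (Levi) method for uniformly elliptic operators in divergence form, then read off the decomposition \eqref{4-3} by choosing the right symmetric singular part.

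First, since $K\in C^\infty(\bar U)$ is uniformly positive-definite, Lax--Milgram gives unique weak solutions $u\in H^1_0(U)$ to $\mathcal{L}_K u=f$ for $f\in H^{-1}(U)$; standard $W^{2,p}$/Schauder theory upgrades this to classical solutions when $f\in L^q$, $q>2$, and yields a symmetric Green's function $G_K(x,y)$ representing the solution as in \eqref{4-2}. The symmetry $G_K(x,y)=G_K(y,x)$ follows from the self-adjointness of $\mathcal{L}_K$. Next I isolate the singular part. For each fixed $x_0\in U$, the frozen operator $-\mathrm{div}(K(x_0)\nabla\cdot)$ has an explicit fundamental solution obtained via the linear change of variables $z\mapsto T_{x_0}z$, which turns the frozen operator into $-\Delta$; using \eqref{matrix T} one checks that $|\det T_{x_0}|=\sqrt{\det K(x_0)}^{-1}$, so the fundamental solution is $\sqrt{\det K(x_0)}^{-1}\Gamma(T_{x_0}z)$. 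Symmetrizing in the two arguments motivates the candidate singular part
\[
S(x,y):=\frac{\sqrt{\det K(x)}^{-1}+\sqrt{\det K(y)}^{-1}}{2}\,\Gamma\!\left(\tfrac{T_x+T_y}{2}(x-y)\right),
\]
and one defines $H_0(x,y):=G_K(x,y)-S(x,y)$.

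The key step is to show that $H_0$ has the claimed H\"older regularity. Applying $\mathcal{L}_K^x$ to $S(\cdot,y)$ and using $K(x)-K(y)=O(|x-y|)$ together with the Taylor expansion of $T_x$ around $y$, the symmetric averaging in $S$ produces cancellation of the leading obstruction, so that $\mathcal{L}_K^x S(x,y)-\delta(x-y)$ has only an integrable $O(|x-y|^{-1})$ singularity in $x$; hence $\mathcal{L}_K^x H_0(\cdot,y)\in L^p_{\mathrm{loc}}(U)$ for every $p<2$, while the Dirichlet datum $H_0(\cdot,y)|_{\partial U}=-S(\cdot,y)|_{\partial U}$ is smooth for $y\in U$ fixed. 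The $W^{2,p}$ theory together with Morrey's embedding gives $H_0(\cdot,y)\in C^{0,\gamma}_{\mathrm{loc}}$ with $\gamma=1-2/p$; the analogous estimate in $y$ plus interpolation yield the joint regularity $H_0\in C^{0,\gamma}_{\mathrm{loc}}(U\times U)$. For the uniform upper bound, one combines the Green's function pointwise estimate $G_K(x,y)\le C|\log|x-y||$ (from De Giorgi--Nash--Moser applied to $\mathcal{L}_K$) with the explicit lower bound of the same logarithmic order of $S(x,y)$ near the diagonal; away from the diagonal, both $G_K$ and $S$ are smooth and bounded on compacta.

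The main obstacle will be the careful bookkeeping in computing $\mathcal{L}_K^x S(x,y)$ and verifying that the symmetric averaging kills the $O(|x-y|^{-2})$ terms that would otherwise appear. This requires expressing the derivatives of $T_x$ in terms of those of $K(x)$ via \eqref{matrix T} and exploiting the fact that the mid-point rule evaluation $\tfrac{T_x+T_y}{2}$ is accurate to second order in $|x-y|$, while simultaneously the prefactor $\tfrac12(\sqrt{\det K(x)}^{-1}+\sqrt{\det K(y)}^{-1})$ cancels the leading variation of the determinant. Once this cancellation is in place, the rest is routine elliptic regularity.
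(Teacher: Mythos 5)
This lemma is not proved in the present paper: it is quoted verbatim as Theorem 1.3 of \cite{CW2} and used as a black box, so there is no in-paper argument to compare against. Your parametrix (Levi-method) construction is the standard route to such a Green's function decomposition and is almost certainly the one taken in the cited reference; the identification $|\det T_x|=\sqrt{\det K(x)}^{-1}$ from the relation $T_x^{-1}(T_x^{-1})^t=K(x)$ and the frozen-coefficient fundamental solution are both correct. A few details warrant tightening in the execution. First, it is not really the symmetric averaging in $S$ that removes the worst singularity: already the asymmetric parametrix $\sqrt{\det K(y)}^{-1}\Gamma(T_y(x-y))$ yields $\mathcal{L}_K^x P-\delta_y=O(|x-y|^{-1})$, because the smoothness of $K$ gives $K(x)-K(y)=O(|x-y|)$, which tempers the $D^2\Gamma=O(|x-y|^{-2})$ singularity. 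Symmetrizing buys $S(x,y)=S(y,x)$, hence $H_0(x,y)=H_0(y,x)$, and makes the joint H\"older estimate cleaner, but it does not lower the order of the error. Second, the Morrey exponent from $W^{2,p}_{\mathrm{loc}}$ with $1<p<2$ into $C^{0,\gamma}$ is $\gamma=2-2/p$, via $W^{2,p}\hookrightarrow W^{1,2p/(2-p)}\hookrightarrow C^{0,2-2/p}$; your quoted $1-2/p$ is negative in this range. Third, the global one-sided bound $H_0\le C$ on all of $U\times U$ does not follow from the local H\"older estimate alone and needs a uniform-up-to-the-boundary argument: for instance, observe that the boundary trace $H_0(\cdot,y)|_{\partial U}=-S(\cdot,y)|_{\partial U}$ is bounded above since $\Gamma$ is bounded below on bounded sets, and then close with a global $L^p$-to-$L^\infty$ elliptic estimate for $\mathcal{L}_K$ using the uniformly controlled source $\|\mathcal{L}_K^x H_0(\cdot,y)\|_{L^p(U)}\le C$ for $1<p<2$.
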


\subsection{Variational setting and existence of maximizers}
We observe that profile functions $F_1$ and $F_2$ in \eqref{3-10} can be arbitrary.  
In the following,   we will choose
\begin{equation*} 
F_1(s)=\ep^{-1}a (s-\mu_\ep)_+,\ \ \   F_2(s)=\ep^{-2}(b+\bar\alpha a \ep) \textbf{1}_{\{s-\mu_\ep>0\}},\ \ \  s\in \mathbb R,
\end{equation*}
in \eqref{3-10}, where $0<\ep<1$, $a>0, b\geq0$ are arbitrary fixed constants, $\bar \alpha=\alpha|\ln\ep|$  with $\alpha=\frac{\kappa}{4\pi h\sqrt{h^2+r_*^2}}$ and $ \mu_\ep $ is a constant to be determined later, see Lemma \ref{lm4-2}. From \eqref{3-10}, we get  equations for $ \zeta $
	\begin{equation}\label{4-5}  
	\begin{split}
\zeta =&\frac{1}{\ep^2}\frac{2h^2a\ep +a^2(h^2+|x|^2)}{(h^2+|x|^2)^2 }\left (\mathcal{G}_K\zeta-\frac{\bar\alpha}{2}|x|^2-\mu_\ep\right )_+\\ &+\frac{1}{\ep^2}\left(\frac{h^2\bar \alpha a\ep}{h^2+|x|^2}+b\right)\textbf{1}_{\{ \mathcal{G}_K\zeta-\frac{\bar\alpha}{2}|x|^2-\mu_\ep>0\} }	\ \  \ \ a.e.\  \text{in}\  D.
\end{split}
\end{equation}
Note that the case $a=0$ corresponds to helical vortices with  the assumption \eqref{1-7}, which has been considered in \cite{CL23, CW, CW2, DDMW2}. To construct helical vortices of \eqref{1-2} without the assumption \eqref{1-7}, we   focus our attention on the case $a>0$.  

Set $|x|=(x_1^2+x_2^2)^\frac{1}{2}$. Let $ 	i_\ep\left(\cdot, \cdot\right) $ be a function defined by
\begin{equation}\label{4-6}
i_\ep\left(r, s\right)=\frac{2h^2 a \ep s_+}{(h^2+r^2)^2}+\frac{ a^2 s_+}{h^2+r^2 }+\frac{h^2 \bar\alpha a \ep \textbf{1}_{\{s>0\}}}{h^2+r^2 }+b\textbf{1}_{\{s>0\}},\ \ \ \ \ r,s\geq 0.
\end{equation} 
Then the right hand side of \eqref{4-5} equals $\ep^{-2} i_\ep\left(|x|, \mathcal{G}_K\zeta-\frac{\bar\alpha}{2}|x|^2-\mu_\ep\right)$.

Let $I_\ep(r,s)=\int_0^s i_\ep(r,\tau) \mathrm d \tau$  for $ s\geq 0 $ and $J_\ep(r, \cdot)$ be the  modified conjugate function of $I_\ep(r,\cdot)$ defined by 
\begin{equation*}
	J_\ep(r,s)=\begin{cases}
		\sup\limits_{t\in\mathbb{R}}[st-I_\ep(r,t)], \ \ \ &\text{if}\ \ s>0,\\
		0, \ \ \ &\text{if}\ \ s=0.
	\end{cases}
\end{equation*}  
Then direct calculation  yields
 \begin{equation}\label{4-8}
 	J_\ep\left(r, s\right)=\frac{(h^2+r^2)^2\left(s-\frac{h^2\bar\alpha a\ep}{h^2+r^2}-b\right)_+^2}{4h^2a\ep+2a^2(h^2+r^2)}.
 \end{equation}

Let $ \Lambda>1 $. We   consider maximization problem of the following  functional for $ \zeta $
   \begin{equation}\label{4-10}
   E_\ep(\zeta)=\frac{1}{2}\int_{D} \zeta\mathcal{G}_{K} \zeta \mathrm{d} x-\frac{\bar{\alpha}}{2}\int_{D} |x|^2 \zeta \mathrm{d} x-\ep^{-2}\int_{D}  J_\ep(|x|, \ep^2\zeta(x)) \mathrm d x, 
   \end{equation}
over the constraint set
\begin{equation}\label{4-9}
	\mathcal{M}_{\kappa, \Lambda} =\left\{ \zeta\in L^1\cap L^\infty(D) \ \Big| \ 0\leq \zeta\leq \frac{\Lambda}{\ep^2},\,\,\, \int_{D} \zeta dx=\kappa\right\}. 
\end{equation}
The choice of $ \Lambda $ will be given in Lemma \ref{lm4-11}.

Our goal is to find maximizers of $E_\ep$ relative to $\mathcal{M}_{\kappa, \Lambda}$ and give their asymptotic behavior as $ \varepsilon\to 0 $. To begin with, we  prove the existence and profiles of maximizers of $E_\ep$.

\begin{lemma}\label{lm4-2}
	For every $\ep\in (0,1)$ and $\Lambda>\max\{\alpha a+b,1+\pi R^2/\kappa\}$, 
	there exists $ \zeta_{\ep, \Lambda} \in \mathcal{M}_{\ep,\Lambda}$ being a maximizer  of $E_\ep$ over $\mathcal{M}_{\kappa, \Lambda}$, i.e., 
	\begin{equation}\label{4-12}
		E_\ep(\zeta_{\ep, \Lambda})= \max_{\zeta\in \mathcal{M}_{\ep,\Lambda}}E_\ep(\zeta)<+\infty.
	\end{equation}
	Moreover, there exists a Lagrange multiplier $ \mu_{\ep, \Lambda} $ with $$\mu_{\ep, \Lambda} \ge- \frac{\alpha R^2}{2}\ln\frac{1}{\ep}-\frac{(h^2+R^2)^2\left(  \Lambda -\frac{h^2\bar\alpha a\ep}{h^2+R^2}-b\right)_+}{2h^2a \ep+ a^2(h^2+R^2)},$$ such that
	\begin{equation}\label{4-13}
	\begin{split}
	\zeta_{\ep, \Lambda}=&\frac{1}{\ep^2}\left(\frac{2h^2 a \ep +a^2(h^2+r^2)}{(h^2+r^2)^2 }\psi_{\ep, \Lambda}+\frac{h^2\bar\alpha a\ep}{h^2+r^2}+b\right)\textbf{1}_{\{0<\psi_{\ep, \Lambda}<(h^2+r^2)^2(\Lambda-\frac{h^2\bar\alpha a\ep}{h^2+r^2}-b)/(2h^2a\ep +a^2(h^2+r^2))\} }\\
	&+\frac{\Lambda}{\ep^2}\textbf{1}_{\{\psi_{\ep, \Lambda}\geq(h^2+r^2)^2(\Lambda-\frac{h^2\bar\alpha a\ep}{h^2+r^2}-b)/(2h^2a\ep +a^2(h^2+r^2))\}} \ \ a.e.\  \text{in}\  D,
		\end{split}
	\end{equation}
	where $ r=|x| $ and 
	\begin{equation}\label{4-14}
		\psi_{\ep, \Lambda}=\mathcal{G}_K\zeta_{\ep, \Lambda}-\frac{\bar{\alpha} r^2}{2}-\mu_{\ep, \Lambda}.
	\end{equation}
    
\end{lemma}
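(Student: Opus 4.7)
The plan is to apply the direct method of the calculus of variations, then derive the stated pointwise formula via admissible variations.

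\textbf{Step 1: Existence via the direct method.} On $\mathcal M_{\kappa,\Lambda}$ the functional $E_\ep$ is bounded above: the Green's energy $\tfrac12\int \zeta\mathcal G_K\zeta\,dx$ is controlled since Lemma \ref{lm4-1} shows $G_K$ has only a logarithmic singularity, hence $\mathcal G_K:L^\infty(D)\to L^\infty(D)$ boundedly; the rotation term is bounded by $\tfrac{\bar\alpha R^2\kappa}{2}$; and the $J_\ep$--term is non-positive. Pick a maximizing sequence $\{\zeta_n\}\subset \mathcal M_{\kappa,\Lambda}$. The uniform $L^\infty$ bound gives weak-$\ast$ convergence along a subsequence to some $\zeta_{\ep,\Lambda}\in L^\infty(D)$, and the pointwise bounds $0\le\zeta\le\Lambda/\ep^2$ together with the mass constraint pass to the limit. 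The quadratic term is weak-$\ast$ continuous by compactness of $\mathcal G_K$ (apply $W^{2,p}$ regularity for the uniformly elliptic problem \eqref{4-1} plus Sobolev embedding), the linear rotation term is weak-$\ast$ continuous, and convexity of $J_\ep(|x|,\cdot)$---it is a Legendre transform---makes $\int J_\ep(|x|,\ep^2\zeta)\,dx$ weakly-$\ast$ lower semi-continuous. Therefore $E_\ep$ is weakly-$\ast$ upper semi-continuous and attains its supremum at $\zeta_{\ep,\Lambda}$.

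\textbf{Step 2: Euler--Lagrange characterization.} For a test function $\eta\in L^\infty(D)$ with $\int \eta=0$ supported in $\{0<\zeta_{\ep,\Lambda}<\Lambda/\ep^2\}$, the family $\zeta_{\ep,\Lambda}+t\eta$ lies in $\mathcal M_{\kappa,\Lambda}$ for $|t|$ small; maximality forces the existence of a Lagrange multiplier $\mu_{\ep,\Lambda}$ with
\[
\psi_{\ep,\Lambda}:=\mathcal G_K\zeta_{\ep,\Lambda}-\tfrac{\bar\alpha}{2}|x|^2-\mu_{\ep,\Lambda}=\partial_s J_\ep(|x|,\ep^2\zeta_{\ep,\Lambda})\quad\text{on}\ \{0<\zeta_{\ep,\Lambda}<\Lambda/\ep^2\}.
\]
One-sided perturbations using non-negative $\eta$ supported in $\{\zeta_{\ep,\Lambda}=0\}$ compensated by negative mass on the interior force $\psi_{\ep,\Lambda}\le 0$ where $\zeta_{\ep,\Lambda}=0$, and symmetrically $\psi_{\ep,\Lambda}\ge\partial_s J_\ep(|x|,\Lambda)$ where $\zeta_{\ep,\Lambda}=\Lambda/\ep^2$. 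Direct differentiation of \eqref{4-8} gives
\[
\partial_s J_\ep(r,s)=\frac{(h^2+r^2)^2\bigl(s-\tfrac{h^2\bar\alpha a\ep}{h^2+r^2}-b\bigr)_+}{2h^2a\ep+a^2(h^2+r^2)},
\]
and algebraically solving the equality for $\zeta_{\ep,\Lambda}$ in the interior regime, combined with the two boundary regimes, reproduces formula \eqref{4-13} verbatim.

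\textbf{Step 3: Bound on $\mu_{\ep,\Lambda}$.} The constraint $\int\zeta_{\ep,\Lambda}=\kappa>0$ forces $\{\zeta_{\ep,\Lambda}>0\}$ to have positive measure; the hypothesis $\Lambda>1+\pi R^2/\kappa$ together with $\ep<1$ forces $\{\zeta_{\ep,\Lambda}<\Lambda/\ep^2\}$ to have positive measure as well (otherwise $\kappa\ge \Lambda\pi R^2/\ep^2$). If these two sets intersect, pick $x_0$ in the intersection; then the interior equality gives
\[
\mu_{\ep,\Lambda}=\mathcal G_K\zeta_{\ep,\Lambda}(x_0)-\tfrac{\bar\alpha}{2}|x_0|^2-\partial_s J_\ep(|x_0|,\ep^2\zeta_{\ep,\Lambda}(x_0)).
\]
Using $\mathcal G_K\zeta_{\ep,\Lambda}\ge 0$ on $D$ (maximum principle for $\mathcal L_K$ with Dirichlet data), $|x_0|\le R$, and monotonicity of $\partial_s J_\ep(r,s)$ in both $r\in[0,R]$ and $s\in[0,\Lambda]$, yields the asserted lower bound on $\mu_{\ep,\Lambda}$. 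If the two sets do not intersect (bang--bang case), a point $x_0$ with $\zeta_{\ep,\Lambda}(x_0)=0$ gives the stronger estimate $\mu_{\ep,\Lambda}\ge -\tfrac{\bar\alpha R^2}{2}$.

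\textbf{Main obstacle.} The most delicate technical point is the weak-$\ast$ upper semi-continuity of the Green's energy. Uniform ellipticity of $\mathcal L_K$ combined with the refined decomposition \eqref{4-3} provides the compactness of $\mathcal G_K$ that makes $\zeta\mapsto\int\zeta\mathcal G_K\zeta\,dx$ sequentially weakly-$\ast$ continuous, but one must verify that the resulting estimates are independent of $\ep$ so that the same argument works uniformly as $\ep\to 0$ in subsequent sections. A smaller subtlety is the monotonicity of $\partial_s J_\ep(r,\Lambda)$ in $r$ needed to replace $|x_0|$ by $R$ in the multiplier bound, which requires a direct comparison of the factors in \eqref{4-8}.
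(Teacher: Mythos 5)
Your Step 1 and the first half of Step 2 match the paper in substance: direct method for existence, then perturbations (equivalently, the bathtub principle of Lieb--Loss \S1.14 that the paper invokes) to obtain a Lagrange multiplier $\mu_{\ep,\Lambda}$ and the three-regime conditions
\[
\psi_{\ep,\Lambda}\le 0 \text{ on } \{\zeta_{\ep,\Lambda}=0\},\quad \psi_{\ep,\Lambda}=\partial_s J_\ep(|x|,\ep^2\zeta_{\ep,\Lambda}) \text{ on } \{0<\zeta_{\ep,\Lambda}<\Lambda/\ep^2\},\quad \psi_{\ep,\Lambda}\ge \partial_s J_\ep(|x|,\Lambda) \text{ on } \{\zeta_{\ep,\Lambda}=\Lambda/\ep^2\}.
\]
Where you go wrong is the final sentence of Step 2, the claim that these relations ``reproduce formula \eqref{4-13} verbatim.'' They do not, because $\partial_s J_\ep(r,s)$ vanishes identically for $0\le s\le \tfrac{h^2\bar\alpha a\ep}{h^2+r^2}+b$. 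Consequently the interior equality only tells you $\psi_{\ep,\Lambda}=0$ on the set $\{0<\zeta_{\ep,\Lambda}\le \ep^{-2}(h^2\bar\alpha a\ep/(h^2+r^2)+b)\}$; it does \emph{not} pin down the value of $\zeta_{\ep,\Lambda}$ there. Formula \eqref{4-13}, on the other hand, forces $\zeta_{\ep,\Lambda}=0$ everywhere on $\{\psi_{\ep,\Lambda}\le 0\}$, and in particular on $\{\psi_{\ep,\Lambda}=0\}$. To pass from the Euler--Lagrange relations to \eqref{4-13} one must additionally prove that $\{\psi_{\ep,\Lambda}=0\}$ has Lebesgue measure zero. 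The paper does exactly this: it notes $\{0<\zeta_{\ep,\Lambda}\le \ep^{-2}(\cdot)\}\subseteq\{\psi_{\ep,\Lambda}=0\}$, and on the latter set, using the fact that second derivatives of a $W^{2,p}$ function vanish a.e.\ on a level set, one has $0=\mathcal L_K\psi_{\ep,\Lambda}=\zeta_{\ep,\Lambda}+\tfrac{2h^4\bar\alpha}{(h^2+r^2)^2}>0$ a.e., which is absurd. This step is not cosmetic --- without it, the maximizer's profile is not determined on a potentially positive-measure set, and the later identification of $\zeta_{\ep,\Lambda}$ as a solution of \eqref{4-5} would fail.

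Step 3 is a genuinely different route from the paper's (the paper argues by contradiction from the bathtub infimum formula \eqref{4-16}, while you evaluate the variational relation at a point). Your route requires, and you correctly flag, the monotonicity of $\partial_s J_\ep(r,\Lambda)$ in $r$, which is a calculation the paper avoids by comparing $\mu_{\ep,\Lambda}$ directly to the measure condition. However, your positive-measure assertion for $\{\zeta_{\ep,\Lambda}<\Lambda/\ep^2\}$ is not airtight as written: supposing $\zeta_{\ep,\Lambda}\equiv\Lambda/\ep^2$ gives $\kappa=\Lambda\pi R^2/\ep^2$, and combining this with $\Lambda>1+\pi R^2/\kappa$ and $\ep<1$ yields only $\kappa^2>\kappa\pi R^2+(\pi R^2)^2$, which is not a contradiction for all admissible $(\kappa,R)$. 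You would instead want to argue via the strict inequality $\kappa\ep^2/\Lambda<\pi R^2$ that makes the constraint set have nonempty interior, as the paper does. So Step 3 can likely be salvaged, but the decisive gap in your proof is the missing measure-zero argument for $\{\psi_{\ep,\Lambda}=0\}$ in Step 2.
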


\begin{proof}
	We may take a sequence $\zeta_{j}\in \mathcal{M}_{\ep,\Lambda}$ such that as $j\to +\infty$
	\begin{equation*}
		\begin{split}
			E_\ep(\zeta_{j}) & \to \sup\{E_\ep(\zeta)~|~\zeta\in \mathcal{M}_{\ep,\Lambda}\}.
		\end{split}
	\end{equation*}
	Since $ \mathcal{M}_{\ep,\Lambda} $ is closed in $ L^\infty(D) $ weak star topology and $ L^2(D) $ weak  topology, there exists $ \zeta_{\ep, \Lambda}\in L^{\infty}(D) $ such that as $j\to +\infty$
	\begin{equation*}
		\zeta_{j}\to \zeta_{\ep, \Lambda} \ \ \ \ \ \ ~~\text{weakly~in}\ L^{2}(D)\ \text{and~weakly~star~in}\ L^{\infty}(D).
	\end{equation*}
	It is easily checked that $\zeta_{\ep, \Lambda}\in \mathcal{M}_{\ep,\Lambda}$. Since the Green's operator $ \mathcal{G}_{K} $ is  compact  from $ L^2(D) $ to $L^2(D)$, we have
	\begin{equation*}
		\lim_{j\to +\infty}\int_D{\zeta_{j} \mathcal{G}_{K}\zeta_{j}}   \mathrm d x = \int_D{\zeta_{\ep, \Lambda} \mathcal{G}_{K}\zeta_{\ep, \Lambda}}    \mathrm d x.
	\end{equation*}
	On the other hand, it follows from $|x|^2\in L^2(D)$ that
	\begin{equation*}
		\begin{split}
			\lim_{j\to +\infty} \int_{D}|x|^2\zeta_j   \mathrm d x    = \int_{D}|x|^2\zeta_{\ep, \Lambda}     \mathrm d x.
		\end{split}
	\end{equation*}
Moreover, due to the convexity of $J_\ep(r,s)$ relative to $s$, we infer from the lower semicontinuity of $J_\ep(r,s)$  that
	\begin{equation*}
			\liminf_{j\to +\infty}\int_D J_\ep(r, \ep^2\zeta_j(x))    \mathrm d x \ge \int_D J_\ep(r, \ep^2\zeta_{\ep, \Lambda}(x))   \mathrm d x.
	\end{equation*}
To conclude, we have $E_\ep(\zeta_{\ep, \Lambda})=\lim_{j\to +\infty}E_\ep(\zeta_j)=\sup_{\zeta\in \mathcal{M}_{\ep,\Lambda}} E_\ep(\zeta)$. The existence of maximizers of $E_\ep$ over $\mathcal{M}_{\kappa, \Lambda}$ is proved.
	
	We now turn to prove the profile $\eqref{4-13}$ of maximizers $ \zeta_{\ep, \Lambda} $. We may assume $\zeta_{\ep, \Lambda} \not\equiv 0$, otherwise the conclusion is obtained by letting $\mu_{\ep, \Lambda}=0$.
	We consider a family of variations of $\zeta_{\ep, \Lambda}$
	\begin{equation*}
		\zeta_{(s)}=\zeta_{\ep, \Lambda}+s(\zeta-\zeta_{\ep, \Lambda}),\ \ \ s\in[0,1],
	\end{equation*}
	defined for arbitrary $\zeta\in \mathcal{M}_{\ep,\Lambda}$. Since $\zeta_{\ep, \Lambda}$ is a maximizer, one computes directly that
	\begin{equation*}
		\begin{split}
			0 & \ge \frac{d}{ds}E_\ep(\zeta_{(s)})|_{s=0^+} \\
			& =\int_{D}(\zeta-\zeta_{\ep, \Lambda})\left(\mathcal{G}_K\zeta_{\ep, \Lambda}-\frac{\bar{\alpha} r^2}{2}  - \frac{(h^2+r^2)^2\left(\ep^2\zeta_{\ep, \Lambda}-\frac{h^2\bar \alpha a\ep}{h^2+r^2}-b\right)_+}{2h^2a\ep+ a^2(h^2+r^2)}\right)  \mathrm d x.
		\end{split}
	\end{equation*}
	This implies that for any  $\zeta\in \mathcal{M}_{\ep,\Lambda}$, there holds
	\begin{equation*}
		\begin{split}
				\int_{D}\zeta_{\ep, \Lambda} \left(\mathcal{G}_K\zeta_{\ep, \Lambda}-\frac{\bar{\alpha} r^2}{2} - \frac{(h^2+r^2)^2\left(\ep^2\zeta_{\ep, \Lambda}-\frac{h^2\bar \alpha a\ep}{h^2+r^2}-b\right)_+}{2h^2a\ep+ a^2(h^2+r^2)}\right)   \mathrm d x \\
			\ge \int_{D}\zeta   \left(\mathcal{G}_K\zeta_{\ep, \Lambda}-\frac{\bar{\alpha} r^2}{2} - \frac{(h^2+r^2)^2\left(\ep^2\zeta_{\ep, \Lambda}-\frac{h^2\bar \alpha a\ep}{h^2+r^2}-b\right)_+}{2h^2a\ep+ a^2(h^2+r^2)}\right)\ \mathrm d x.
		\end{split}
	\end{equation*}
	By  the classical bathtub principle (see \cite{Lieb}, \S1.14x), we obtain that there exists a  Lagrange multiplier $ \mu_{\ep, \Lambda} $ such taht 
	\begin{equation}\label{4-15}
		\begin{split}
			\mathcal{G}_K\zeta_{\ep, \Lambda} -\frac{\bar{\alpha} r^2}{2}-\frac{(h^2+r^2)^2\left(\ep^2\zeta_{\ep, \Lambda}-\frac{h^2\bar \alpha a\ep}{h^2+r^2}-b\right)_+}{2h^2a\ep+ a^2(h^2+r^2)} &\ge \mu_{\ep, \Lambda}  \ \ \  \text{whenever}\  \zeta_{\ep, \Lambda} =\frac{\Lambda}{\ep^2}, \\
			\mathcal{G}_K\zeta_{\ep, \Lambda} -\frac{\bar{\alpha} r^2}{2}-\frac{(h^2+r^2)^2\left(\ep^2\zeta_{\ep, \Lambda}-\frac{h^2\bar \alpha a\ep}{h^2+r^2}-b\right)_+}{2h^2a\ep+ a^2(h^2+r^2)} &=\mu_{\ep, \Lambda}  \ \ \ \text{whenever}\  0<\zeta_{\ep, \Lambda} <\frac{\Lambda}{\ep^2}, \\
			\mathcal{G}_K\zeta_{\ep, \Lambda} -\frac{\bar{\alpha} r^2}{2}- \frac{(h^2+r^2)^2\left(\ep^2\zeta_{\ep, \Lambda}-\frac{h^2\bar \alpha a\ep}{h^2+r^2}-b\right)_+}{2h^2a\ep+ a^2(h^2+r^2)}& \le\mu_{\ep, \Lambda} \ \ \  \text{whenever}\  \zeta_{\ep, \Lambda} =0.
		\end{split}
	\end{equation}
Moreover, $ \mu_{\ep, \Lambda} $ satisfies
	\begin{equation}\label{4-16}
		\mu_{\ep, \Lambda}=\inf\left\{t:\left|\left\{\mathcal{G}_K\zeta_{\ep, \Lambda}-\frac{\bar{\alpha} r^2}{2}- \frac{(h^2+r^2)^2\left(\ep^2\zeta_{\ep, \Lambda}-\frac{h^2\bar\alpha a\ep}{h^2+r^2}-b\right)_+}{2h^2a\ep+ a^2(h^2+r^2)} >t\right\}\right|\le \frac{\kappa\ep^2}{\Lambda}\right\}\in \mathbb{R}.
	\end{equation} 
	Set $$	\psi_{\ep, \Lambda}=\mathcal{G}_K\zeta_{\ep, \Lambda}-\frac{\bar{\alpha} r^2}{2}-\mu_{\ep, \Lambda}.$$ Note that   from the second equation in \eqref{4-15}, we have $\{0<\zeta_{\ep, \Lambda}\le \ep^{-2}(h^2\bar \alpha a\ep/(h^2+r^2)+b)  \}\subseteq\{\psi_{\ep, \Lambda}=0\}$. While on the set $\{\psi_{\ep, \Lambda}=0\}$, by   properties of Sobolev spaces, one has 
	\begin{equation*}
		0=\mathcal{L}_{K}(\mathcal{G}_K\zeta_{\ep, \Lambda})-\mathcal{L}_{K}\left (\frac{\bar{\alpha} |x|^2}{2}\right )=\zeta_{\ep, \Lambda}+\frac{2 h^4\bar{\alpha}}{(h^2+r^2)^2}>0\ \ \ \text{a.e.}
	\end{equation*}
	 Thus, we conclude that the Lesbague measure of $ \{\psi_{\ep, \Lambda}=0\} $ is 0, which implies that $|\{0<\zeta_{\ep, \Lambda}\le  \ep^{-2}(h^2\bar \alpha a\ep/(h^2+r^2)+b)\}|=|\{\psi_{\ep, \Lambda}=0\}|=0$. 
 $\eqref{4-13}$ follows immediately from the above conclusion and \eqref{4-15}.

	It remains to show that  $  \mu_{\ep, \Lambda} \ge- \frac{\alpha R^2}{2}\ln\frac{1}{\ep}-\frac{(h^2+R^2)^2\left(  \Lambda -\frac{h^2\bar \alpha a\ep}{h^2+R^2}-b\right)_+}{2h^2a \ep+ a^2(h^2+R^2)}$. We prove it  by contradiction. Suppose on the contrary that $\mu_{\ep, \Lambda} < - \frac{\alpha R^2}{2}\ln\frac{1}{\ep}-\frac{(h^2+R^2)^2\left(  \Lambda -\frac{h^2\bar \alpha a\ep}{h^2+R^2}-b\right)_+}{2h^2a \ep+ a^2(h^2+R^2)}$. On the one hand, from \eqref{4-16}, we find that the Lesbague measure of the set  \begin{equation*}
		 \left\{\mathcal{G}_K\zeta_{\ep, \Lambda}-\frac{\bar{\alpha} r^2}{2} - \frac{(h^2+r^2)^2\left(\ep^2\zeta_{\ep, \Lambda}-\frac{h^2\bar \alpha a\ep}{h^2+r^2}-b\right)_+}{2h^2a\ep+ a^2(h^2+r^2)} >- \frac{\alpha R^2}{2}\ln\frac{1}{\ep}-\frac{(h^2+R^2)^2\left(  \Lambda -\frac{h^2\bar \alpha a\ep}{h^2+R^2}-b\right)_+}{2h^2a \ep+ a^2(h^2+R^2)}\right\}  
	\end{equation*}  
is not larger than $ \frac{\kappa\ep^2}{\Lambda}$. 

On the other hand, it can be seen that for any $x\in D$, $\mathcal{G}_K\zeta_{\ep, \Lambda}(x)>0$ and 
$$\frac{\bar{\alpha} r^2}{2} + \frac{(h^2+r^2)^2\left(\ep^2\zeta_{\ep, \Lambda}-\frac{h^2\bar \alpha a\ep}{h^2+r^2}-b\right)_+}{2h^2a\ep+ a^2(h^2+r^2)} \leq \frac{\alpha R^2}{2}\ln\frac{1}{\ep}+\frac{(h^2+R^2)^2\left(  \Lambda -\frac{h^2\bar \alpha a\ep}{h^2+R^2}-b\right)_+}{2h^2a \ep+ a^2(h^2+R^2)}.$$ This implies that the set 
 \begin{equation*}
 \left\{\mathcal{G}_K\zeta_{\ep, \Lambda}-\frac{\bar{\alpha} r^2}{2}  - \frac{(h^2+r^2)^2\left(\ep^2\zeta_{\ep, \Lambda}-\frac{h^2\bar \alpha a\ep}{h^2+r^2}-b\right)_+}{2h^2a\ep+ a^2(h^2+r^2)} >- \frac{\alpha R^2}{2}\ln\frac{1}{\ep}-\frac{(h^2+R^2)^2\left(  \Lambda -\frac{h^2\bar \alpha a\ep}{h^2+R^2}-b\right)_+}{2h^2a \ep+ a^2(h^2+R^2)}\right\} 
\end{equation*}  equals exactly to $D$, whose measure is $\pi R^2$. This immediately leads to a contradiction, since  $\frac{\kappa\ep^2}{\Lambda}<\pi R^2$ due to   assumptions $\ep<1$ and $\Lambda>\pi R^2/\kappa$. The proof is thus complete.
\end{proof}

We observe that the appearance of last ``vortex patch'' term of $ \zeta_{\ep, \Lambda} $ in profile \eqref{4-13} makes  $ \zeta_{\ep, \Lambda} $ to not satisfy \eqref{4-5}, which implies that $  \mathcal{G}_K\zeta_{\ep, \Lambda} $ does not satisfy the semilinear equation \eqref{3-9}. 
In  the following, by analysing the asymptotic behavior of  maximizers
$ \zeta_{\ep, \Lambda} $, we will show that the last term of $ \zeta_{\ep, \Lambda} $ in \eqref{4-13} indeed vanishes when choosing the parameter $ \Lambda $ sufficiently large.
 \subsection{lower bounds of   $ E_\ep(\zeta_{\ep,\Lambda}) $ and $ \mu_{\ep,\Lambda} $}
For convenience, we will use $ C_1, C_2,\cdots $ to denote generic positive constants independent of $ \ep $ and $ \Lambda $ that may change from line to line. Let us define a function $ Y(x):D\to\mathbb{R} $ which will be frequently used in this subsection. For $ x\in D$, we define
\begin{equation}\label{4-17}
	Y(x):=\frac{\kappa}{2\pi\sqrt{\det K(x)}}-\alpha|x|^2=\frac{\kappa\sqrt{h^2+|x|^2}}{2\pi h}-\alpha|x|^2.
\end{equation}
Clearly, $ Y $ is radially symmetric in $ D=B_{R}(0) $. Moreover, one computes directly that
\begin{lemma}\label{lm4-3}
The maximizers set of $ Y $ in $ D $ is $ \{x\mid |x|=r_*\} $. That is, $ Y|_{\partial B_{r_*}(0)}=\max_{D}Y $. Moreover, up to a rotation the maximizer is unique.
\end{lemma}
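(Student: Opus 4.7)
The plan is to exploit the radial symmetry of $Y$: writing $r=|x|$, define $y(r):=Y(x)=\frac{\kappa\sqrt{h^{2}+r^{2}}}{2\pi h}-\alpha r^{2}$ on $[0,R]$, and reduce the problem to a one-dimensional calculus exercise. The lemma then follows by locating the critical points of $y$ and checking the sign of $y'$ on either side.

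Differentiating gives
\begin{equation*}
y'(r)=\frac{\kappa r}{2\pi h\sqrt{h^{2}+r^{2}}}-2\alpha r=r\left(\frac{\kappa}{2\pi h\sqrt{h^{2}+r^{2}}}-2\alpha\right).
\end{equation*}
The factor in parentheses is strictly decreasing in $r$, and the definition $\alpha=\frac{\kappa}{4\pi h\sqrt{h^{2}+r_{*}^{2}}}$ is precisely what makes it vanish at $r=r_{*}$. Hence the only critical points of $y$ on $[0,R]$ are $r=0$ and $r=r_{*}$, with $y'(r)>0$ for $r\in(0,r_{*})$ and $y'(r)<0$ for $r\in(r_{*},R]$. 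Consequently $y$ is strictly increasing on $[0,r_{*}]$ and strictly decreasing on $[r_{*},R]$, so $y$ attains its unique maximum on $[0,R]$ at $r=r_{*}$.

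Translating back to $Y$ on $D=B_{R}(0)$, this means $Y$ achieves its maximum precisely on the level set $\{x\in D:|x|=r_{*}\}$, which is the circle $\partial B_{r_{*}}(0)$. Since $Y$ depends only on $|x|$, any rotation of a maximizer is again a maximizer, and conversely all maximizers lie on this single circle and are related by rotation; in this sense the maximizer is unique modulo the $SO(2)$-action. I do not expect any substantive obstacle here — the main point is simply to recognize that the specific value of $\alpha$ in the statement of Theorem \ref{thm2} has been chosen exactly to place the critical radius at $r_{*}$, which is why this lemma will later let us localize the concentration of $\zeta_{\ep,\Lambda}$ near the desired helix.
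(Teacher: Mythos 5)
Your proposal is correct and coincides with the argument the paper intends: the paper introduces $Y$ in \eqref{4-17}, remarks that it is radial, and states Lemma \ref{lm4-3} with the sole justification ``one computes directly that,'' which is exactly the one-variable calculus computation you carry out. Your derivative $y'(r)=r\bigl(\frac{\kappa}{2\pi h\sqrt{h^2+r^2}}-2\alpha\bigr)$ and the observation that the choice $\alpha=\frac{\kappa}{4\pi h\sqrt{h^2+r_*^2}}$ makes the parenthesized (strictly decreasing) factor vanish precisely at $r=r_*$ are correct, and the monotonicity on $[0,r_*]$ and $[r_*,R]$ indeed gives that the maximizing set is the circle $|x|=r_*$, unique up to the rotational symmetry of $Y$.
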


We first give lower bounds of   $ E_\ep(\zeta_{\ep,\Lambda}) $ by choosing proper test functions.

\begin{lemma}\label{lm4-4}
	For any $z\in D$, there exists a constant $C>0$ independent of $\ep $ and $ \Lambda$, such that for all $\varepsilon$ sufficiently small 
	\begin{equation*}
		E_\ep(\zeta_{\ep,\Lambda})\ge \frac{\kappa}{2}Y(z)\ln\frac{1}{\varepsilon}-C.
	\end{equation*}
	As a consequence, there holds
	\begin{equation}\label{4-18}
		E_\ep(\zeta_{\ep,\Lambda})\ge \frac{\kappa}{2}Y((r_*,0))\ln\frac{1}{\varepsilon}-C.
	\end{equation}
    
\end{lemma}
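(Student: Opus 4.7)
\textbf{Proof plan for Lemma \ref{lm4-4}.}
The natural strategy is the classical test-function method for vortex desingularization: exhibit an explicit $\tilde\zeta_\ep\in\mathcal{M}_{\kappa,\Lambda}$ concentrated at $z$ and estimate $E_\ep(\tilde\zeta_\ep)$; since $\zeta_{\ep,\Lambda}$ is a maximizer, $E_\ep(\zeta_{\ep,\Lambda})\ge E_\ep(\tilde\zeta_\ep)$. Choose $c>0$ small enough that $\kappa/(\pi c^2)$ lies below the uniform lower bound on $\Lambda$ enforced by the hypothesis of Lemma \ref{lm4-2}, and set
\begin{equation*}
\tilde\zeta_\ep(x)=\frac{\kappa}{\pi c^2\ep^2}\mathbf{1}_{B_{c\ep}(z)}(x).
\end{equation*}
For $\ep$ small the ball $B_{c\ep}(z)\subset D$, and clearly $\int_D\tilde\zeta_\ep=\kappa$ and $0\le\tilde\zeta_\ep\le\Lambda/\ep^2$, so $\tilde\zeta_\ep\in\mathcal{M}_{\kappa,\Lambda}$.

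The main computation is the self-energy term. Using the Green's function decomposition of Lemma \ref{lm4-1},
\begin{equation*}
\tfrac12\int_D\tilde\zeta_\ep\,\mathcal G_K\tilde\zeta_\ep\,dx
=\tfrac12\Bigl(\tfrac{\kappa}{\pi c^2\ep^2}\Bigr)^{\!2}\!\iint_{B_{c\ep}(z)\times B_{c\ep}(z)}\!\Bigl[\tfrac{\sqrt{\det K(x)}^{-1}+\sqrt{\det K(y)}^{-1}}{2}\,\Gamma\!\bigl(\tfrac{T_x+T_y}{2}(x-y)\bigr)+H_0(x,y)\Bigr]dx\,dy.
\end{equation*}
Since $H_0$ is bounded, the $H_0$ contribution is $O(1)$. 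For the singular part, smoothness of $\sqrt{\det K(\cdot)}^{-1}$ and of $T_\cdot$ gives $\sqrt{\det K(x)}^{-1}=\sqrt{\det K(z)}^{-1}+O(\ep)$ and $T_x=T_z+O(\ep)$ uniformly for $x\in B_{c\ep}(z)$; together with positive-definiteness of $T_z$ this yields $\bigl|\ln|\tfrac{T_x+T_y}{2}(x-y)|-\ln|T_z(x-y)|\bigr|=O(1)$ uniformly, and $\bigl|\ln|T_z(x-y)|-\ln|x-y|\bigr|=O(1)$. Combining with the elementary identity $\iint_{B_\rho\times B_\rho}\ln(1/|x-y|)\,dx\,dy=\pi^2\rho^4\ln(1/\rho)+O(\rho^4)$ applied with $\rho=c\ep$, I obtain
\begin{equation*}
\tfrac12\int_D\tilde\zeta_\ep\,\mathcal G_K\tilde\zeta_\ep\,dx=\frac{\kappa^2}{4\pi\sqrt{\det K(z)}}\ln\frac{1}{\ep}+O(1).
\end{equation*}

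The remaining two terms are easy. Since $\bar\alpha=\alpha|\ln\ep|$ and $\tilde\zeta_\ep$ has mass $\kappa$ concentrated in a disc of radius $c\ep$ around $z$,
\begin{equation*}
-\frac{\bar\alpha}{2}\int_D|x|^2\tilde\zeta_\ep\,dx=-\frac{\alpha\kappa|z|^2}{2}\ln\frac{1}{\ep}+O(\ep|\ln\ep|)=-\frac{\alpha\kappa|z|^2}{2}\ln\frac{1}{\ep}+O(1).
\end{equation*}
For the penalty, $\ep^2\tilde\zeta_\ep=\kappa/(\pi c^2)$ is an $\ep$-independent constant on $B_{c\ep}(z)$, so the integrand $J_\ep(|x|,\ep^2\tilde\zeta_\ep)$ defined in \eqref{def of J} stays uniformly bounded (for $\ep$ small, using $\bar\alpha\ep=O(\ep|\ln\ep|)\to0$), hence
\begin{equation*}
\Bigl|\ep^{-2}\!\int_DJ_\ep(|x|,\ep^2\tilde\zeta_\ep)\,dx\Bigr|\le C\ep^{-2}\cdot\pi c^2\ep^2=O(1).
\end{equation*}
Summing the three contributions gives $E_\ep(\tilde\zeta_\ep)=\tfrac12\kappa\bigl(\tfrac{\kappa}{2\pi\sqrt{\det K(z)}}-\alpha|z|^2\bigr)\ln(1/\ep)+O(1)=\tfrac\kappa2 Y(z)\ln(1/\ep)+O(1)$, which is the first assertion; the constants $O(1)$ depend only on $z$ (through derivatives of $K$ near $z$) and on $c$, hence are independent of $\ep$ and $\Lambda$. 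The second assertion \eqref{4-18} then follows by choosing $z=(r_*,0)$ and invoking Lemma \ref{lm4-3}, provided $r_*\in(0,R)$ so that $B_{c\ep}(z)\subset D$ for small $\ep$.

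The main obstacle is the self-energy estimate: one must extract the sharp leading constant $\kappa^2/(4\pi\sqrt{\det K(z)})$ by showing that the replacements $K(x),K(y)\mapsto K(z)$ and $T_x,T_y\mapsto T_z$ produce only $O(1)$ errors after the $\ep^{-4}$ prefactor, which relies crucially on the smoothness of the regular part $H_0$ supplied by Lemma \ref{lm4-1} together with the uniform local equivalence $|T_z(x-y)|\asymp|x-y|$ coming from positive-definiteness of $T_z$.
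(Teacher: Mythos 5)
Your proof is correct and follows essentially the same test-function strategy as the paper's (the paper takes the $T_z$-adapted ellipse $T_z^{-1}B_{r_\ep}(0)+z$ of height $\ep^{-2}$ rather than a round ball, but this only affects the $O(1)$ terms, as your computation shows). One small slip: since $\kappa/(\pi c^2)$ is \emph{decreasing} in $c$, you must take $c$ \emph{large} enough (e.g.\ $c=\sqrt{\kappa/\pi}$, which works because the hypothesis of Lemma \ref{lm4-2} forces $\Lambda>1$), not small enough, to guarantee $\tilde\zeta_\ep\le\Lambda/\ep^2$.
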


\begin{proof}
	For any $z\in D$, we choose a test function $\hat{\zeta}_{\varepsilon} $ defined by
	\begin{equation*}
		\hat{\zeta}_{\varepsilon}=\frac{1}{\varepsilon^2}\textbf{1}_{T_z^{-1}B_{r_\varepsilon}(0)+z},
	\end{equation*}
	where the matrix $ T_z $ satisfies \eqref{matrix T}, and $ r_\varepsilon>0 $ satisfies
	\begin{equation}\label{4-19}
		\pi r_\varepsilon^2\sqrt{\det K(z)}=\varepsilon^2\kappa.
	\end{equation}
	Thus $ \hat{\zeta}_{\varepsilon} \in \mathcal{M}_{\varepsilon,\Lambda} $ for $ \varepsilon $ sufficiently small. Since $\zeta_{\ep,\Lambda}$ is a maximizer, we have $E_\ep(\zeta_{\ep,\Lambda})\ge E_\ep(\hat{\zeta}_{\varepsilon})$. By Lemma \ref{lm4-1}, we get
	\begin{equation*}
		\begin{split}
			E_\ep(\hat{\zeta}_{\varepsilon})=& \frac{1}{2}\int_D\int_D\hat{\zeta}_{\varepsilon}(x)G_{K}(x,y)\hat{\zeta}_{\varepsilon}(y)\mathrm d x\mathrm d y-{\frac{\alpha}{2} \ln{\frac{1}{\varepsilon}}}\int_D|x|^2\hat{\zeta}_{\varepsilon}  \mathrm d x-\ep^{-2}\int_D J_\ep(r, \ep^2 \hat{\zeta}_{\varepsilon}(x))  \mathrm d x \\
			\geq&\frac{1}{2}\int_{T_z^{-1}B_{r_\varepsilon}(0)+z}\int_{T_z^{-1}B_{r_\varepsilon}(0)+z}\frac{\sqrt{\det K(x)}^{-1}+\sqrt{\det K(y)}^{-1}}{2\varepsilon^4}\Gamma\left (\frac{T_x+T_y}{2}(x-y)\right)   \mathrm d x\mathrm d y\\
			& -{\frac{\alpha}{2} \ln{\frac{1}{\varepsilon}}}\int_D|x|^2\hat{\zeta}_{\varepsilon}  \mathrm d x-C_1.
		\end{split}
	\end{equation*}
	Since $ r_\varepsilon=O(\varepsilon),  $ by the positive-definiteness and regularity of $ K $, we have
	\begin{equation*}
		\sqrt{\det K(x)}^{-1}=\sqrt{\det K(z)}^{-1}+O(\varepsilon),\ \ \ \ \forall x\in T_z^{-1}B_{r_\varepsilon}(0)+z,
	\end{equation*}
	\begin{equation*}
		\Gamma\left (\frac{T_x+T_y}{2}(x-y)\right)=\Gamma\left (T_z(x-y)\right)+O(\varepsilon),\ \ \ \ \forall x,y\in T_z^{-1}B_{r_\varepsilon}(0)+z,\ x\neq y.
	\end{equation*}
	Thus one computes directly that
	\begin{equation*}
		\begin{split}
			E_\ep(\hat{\zeta}_{\varepsilon})
			\geq&\frac{1}{2\varepsilon^4}\int_{T_z^{-1}B_{r_\varepsilon}(0)+z}\int_{T_z^{-1}B_{r_\varepsilon}(0)+z}\left (\sqrt{\det K(z)}^{-1}+O(\varepsilon)\right ) \left( \Gamma\left (T_z(x-y)\right)+O(\varepsilon)\right )  \mathrm d x\mathrm d y\\
			& -{\frac{\kappa\alpha(|z|+O(\varepsilon))^2}{2} \ln{\frac{1}{\varepsilon}}}-C_1\\
			\ge& \frac{\sqrt{\det K(z)}^{-1}}{4\pi\varepsilon^4}\int_{T_z^{-1}B_{r_\varepsilon}(0)+z}\int_{T_z^{-1}B_{r_\varepsilon}(0)+z}\ln{\frac{1}{|T_z(x-y)|}}  \mathrm d x\mathrm d y -{\frac{\kappa\alpha|z|^2}{2} \ln{\frac{1}{\varepsilon}}}-C_2\\
			=& \frac{\sqrt{\det K(z)}^{-1}}{4\pi\varepsilon^4}\int_{B_{r_\varepsilon}(0)}\int_{B_{r_\varepsilon}(0)}\ln{\frac{1}{|x'-y'|}}\cdot \det K(z) \mathrm d x'\mathrm d y' -{\frac{\kappa\alpha|z|^2}{2} \ln{\frac{1}{\varepsilon}}}-C_2\\
			\geq&\frac{\sqrt{\det K(z)}}{4\pi\varepsilon^4}(\pi r_\varepsilon^2)^2\ln\frac{1}{\varepsilon}-{\frac{\kappa\alpha|z|^2}{2} \ln{\frac{1}{\varepsilon}}}-C_3\\
			=& \frac{\kappa}{2}Y(z)\ln\frac{1}{\varepsilon}-C_4,
		\end{split}
	\end{equation*}
	where we have used \eqref{matrix T} and \eqref{4-19}. By choosing $ z=(r_*,0) $ and using Lemma \ref{lm4-4}, we get \eqref{4-18}. The proof is thus finished.
\end{proof}

Based on the lower bound of $ E_\ep(\zeta_{\ep,\Lambda}) $ in Lemma \ref{lm4-4}, we are   able to refine the lower bound of Lagrange multiplier $\mu_{\ep,\Lambda}$ in Lemma \ref{lm4-2}.
\begin{lemma}\label{lm4-5}
	There holds for $ \varepsilon $ sufficiently small 
	$$\mu_{\ep,\Lambda}\ge \frac{2}{\kappa}E_\varepsilon(\zeta_{\ep,\Lambda})+ \frac{\alpha}{2\kappa}\ln\frac{1}{\varepsilon}\int_D |x|^2 \zeta_{\ep,\Lambda} \mathrm d x-C,$$
where $C$ is a constant independent of $\ep $ and $ \Lambda$.	As a consequence, it holds 
	\begin{equation*}
		\mu_{\ep,\Lambda}\ge Y((r_*,0))\ln\frac{1}{\varepsilon}+ \frac{\alpha}{2\kappa}\ln\frac{1}{\varepsilon}\int_D |x|^2 \zeta_{\ep,\Lambda} \mathrm d x-C.
	\end{equation*}

\end{lemma}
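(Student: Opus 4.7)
The plan is to multiply the Euler--Lagrange conditions \eqref{4-15} pointwise by $\zeta_{\ep,\Lambda}$, integrate, and massage the result so that $\mu_{\ep,\Lambda}\kappa$ is expressed in terms of $E_\ep(\zeta_{\ep,\Lambda})$ plus controlled remainders. Write $c(r):=\frac{h^2\bar\alpha a\ep}{h^2+r^2}+b$, $\phi:=\frac{(h^2+|x|^2)^2(\ep^2\zeta_{\ep,\Lambda}-c(|x|))_+}{2h^2 a\ep+a^2(h^2+|x|^2)}$ and $h:=\mathcal G_K\zeta_{\ep,\Lambda}-\tfrac{\bar\alpha |x|^2}{2}-\phi$. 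From \eqref{4-15} one has $h\ge\mu_{\ep,\Lambda}$ on $\{\zeta_{\ep,\Lambda}>0\}$, with equality where $0<\zeta_{\ep,\Lambda}<\Lambda/\ep^2$. Multiplying by $\zeta_{\ep,\Lambda}\ge 0$ and integrating yields
\[
\int_D \zeta_{\ep,\Lambda}\,h\,dx = \mu_{\ep,\Lambda}\kappa+\mathcal R,\qquad \mathcal R:=\int_{\{\zeta_{\ep,\Lambda}=\Lambda/\ep^2\}}\zeta_{\ep,\Lambda}(h-\mu_{\ep,\Lambda})\,dx\ge 0.
\]

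Next, applying the elementary identity $s(s-c)_+=(s-c)_+^2+c(s-c)_+$ with $s=\ep^2\zeta_{\ep,\Lambda}$ and $c=c(|x|)$, a direct calculation gives
\[
\int_D \zeta_{\ep,\Lambda}\,\phi\,dx = 2\ep^{-2}\int_D J_\ep(|x|,\ep^2\zeta_{\ep,\Lambda})\,dx+\mathcal E,
\]
where $\mathcal E:=\ep^{-2}\int \frac{(h^2+|x|^2)^2\,c(|x|)\,(\ep^2\zeta_{\ep,\Lambda}-c(|x|))_+}{2h^2 a\ep+a^2(h^2+|x|^2)}\,dx$. Since $c(|x|)=b+O(\ep|\ln\ep|)$ is bounded, the prefactor is uniformly bounded for small $\ep$, and $\int(\ep^2\zeta_{\ep,\Lambda}-c(|x|))_+\,dx\le\ep^2\kappa$, we obtain $\mathcal E\le C_1$ with $C_1$ independent of $\ep$ and $\Lambda$. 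Substituting the two displays into the definition $2E_\ep=\int\zeta\mathcal G_K\zeta\,dx-\bar\alpha\int|x|^2\zeta\,dx-2\ep^{-2}\int J_\ep\,dx$ then produces
\[
\mu_{\ep,\Lambda}\kappa = 2E_\ep(\zeta_{\ep,\Lambda})+\tfrac{\bar\alpha}{2}\int_D|x|^2\zeta_{\ep,\Lambda}\,dx-\mathcal E-\mathcal R.
\]
Dividing by $\kappa$ and using $\bar\alpha=\alpha|\ln\ep|$ gives the asserted inequality, provided we have a uniform bound $\mathcal R\le C_2\kappa$.

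The principal technical step is to establish such a bound on $\mathcal R$. The constraint $\zeta_{\ep,\Lambda}\le\Lambda/\ep^2$ combined with $\int\zeta_{\ep,\Lambda}\,dx=\kappa$ forces the patch $\{\zeta_{\ep,\Lambda}=\Lambda/\ep^2\}$ to have Lebesgue measure at most $\kappa\ep^2/\Lambda$, whence $\mathcal R\le \kappa\cdot\sup_{\{\zeta_{\ep,\Lambda}=\Lambda/\ep^2\}}(h-\mu_{\ep,\Lambda})$, and the task reduces to bounding $h-\mu_{\ep,\Lambda}$ pointwise on the patch by a universal constant. The quantity $h-\mu_{\ep,\Lambda}$ is nonnegative there, vanishes on the boundary of the patch (by continuity of $\mathcal G_K\zeta_{\ep,\Lambda}$ together with the equality case of \eqref{4-15} on $\overline{\{0<\zeta_{\ep,\Lambda}<\Lambda/\ep^2\}}$), and satisfies an elliptic equation whose source has the explicit structure coming from $\phi(|x|,\Lambda/\ep^2)=\frac{(h^2+|x|^2)^2(\Lambda-c(|x|))_+}{2h^2a\ep+a^2(h^2+|x|^2)}$; a comparison argument against an explicit radial barrier of size $O(1)$ then delivers the required bound. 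Once this is in place, the main inequality of the lemma follows, and the ``As a consequence'' estimate is obtained by inserting the lower bound $E_\ep(\zeta_{\ep,\Lambda})\ge\tfrac{\kappa}{2}Y((r_*,0))|\ln\ep|-C$ of Lemma~\ref{lm4-4}.
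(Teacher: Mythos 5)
Your computation faithfully reproduces the paper's key identity \eqref{4-20}: multiplying the Euler--Lagrange relations \eqref{4-15} by $\zeta_{\ep,\Lambda}$, integrating, using $s(s-c)_+=(s-c)_+^2+c(s-c)_+$ to relate $\int\zeta_{\ep,\Lambda}\phi$ to $2\ep^{-2}\int J_\ep$, and substituting into $2E_\ep$ gives exactly $\mu_{\ep,\Lambda}\kappa=2E_\ep(\zeta_{\ep,\Lambda})+\frac{\bar\alpha}{2}\int|x|^2\zeta_{\ep,\Lambda}-\mathcal E-\mathcal R$, and your $\mathcal E\le C$ bound (constant independent of $\ep,\Lambda$) matches the paper's treatment of its penultimate term. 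Where you depart from the paper is in controlling $\mathcal R=\frac{\Lambda}{\ep^2}\int_D U_{\ep,\Lambda}$ with $U_{\ep,\Lambda}=[\psi_{\ep,\Lambda}-\phi(|x|,\Lambda/\ep^2)]_+$. The paper never produces a pointwise bound on $U_{\ep,\Lambda}$: after checking $U_{\ep,\Lambda}\in H^1_0(D)$ (via the rough lower bound on $\mu_{\ep,\Lambda}$ from Lemma~\ref{lm4-2}, a point you need as well since $A_\ep$ may touch $\partial D$), it tests \eqref{4-22} against $U_{\ep,\Lambda}$ and closes the estimate with H\"older, the embedding $W^{1,1}_0(D)\hookrightarrow L^2(D)$ and $|A_\ep|\le\kappa\ep^2/\Lambda$.

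Your alternative -- reducing to $\mathcal R\le\kappa\sup_{A_\ep}(h-\mu_{\ep,\Lambda})$ and invoking ``a comparison argument against an explicit radial barrier of size $O(1)$'' -- has a genuine gap at that last step. On $A_\ep$ the source of the equation for $h-\mu_{\ep,\Lambda}$ is dominated by $\zeta_{\ep,\Lambda}=\Lambda/\ep^2$ (the $\mathcal L_K\phi(|x|,\Lambda/\ep^2)$ term you emphasize is only $O(\Lambda)$, hence lower order), so the source has size $\sim\Lambda/\ep^2$. A radial barrier centered at a point of $A_\ep$ gives an $O(1)$ bound only if $A_\ep$ is contained in a ball of radius $O(\ep/\sqrt{\Lambda})$, which the measure bound $|A_\ep|\le\kappa\ep^2/\Lambda$ does not imply -- at this stage (Lemma~\ref{lm4-9} is proved later, and uses this lemma) $A_\ep$ could a priori be a thin set of large diameter -- while a radial barrier centered at the origin over-counts the source by a factor of the circumference when $A_\ep$ is a small blob. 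To make the pointwise route rigorous one needs a Talenti-type rearrangement comparison for divergence-form operators (giving $\|u\|_{\infty}\le C\|f\|_{\infty}|A|$), applied to the positive part via a Kato-type inequality, or a volume-based ABP estimate. That is considerably more machinery than the paper's test-function argument, which requires only the Sobolev embedding $W^{1,1}_0\hookrightarrow L^2$ and H\"older and would also finish your proof from the point at which you have $\mathcal R=\frac{\Lambda}{\ep^2}\int_D U_{\ep,\Lambda}$ and $U_{\ep,\Lambda}\in H^1_0(D)$. The final ``as a consequence'' step via Lemma~\ref{lm4-4} is correct.
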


\begin{proof}
	It follows from the definition of $ E_\varepsilon $ and \eqref{4-15} that
	\begin{align*} 
		&\quad 2E_\ep(\zeta_{\ep,\Lambda})= \int_{D}\zeta_{\ep,\Lambda}\mathcal{G}_{K} \zeta_{\ep,\Lambda} \mathrm{d} x- \alpha \ln \frac{1}{\ep}\int_{D} |x|^2 \zeta_{\ep,\Lambda} \mathrm{d} x-2\ep^{-2}\int_{D}  J_\ep(r, \ep^2\zeta_{\ep,\Lambda}(x))    \mathrm d x\nonumber\\
		&= \int_{D}\zeta_{\ep,\Lambda} \psi_{\ep,\Lambda}   \mathrm d x- \frac{\alpha}{2} \ln \frac{1}{\ep}\int_{D} |x|^2 \zeta_{\ep,\Lambda} \mathrm{d} x-2\ep^{-2}\int_{D}  J_\ep(r, \ep^2\zeta_{\ep,\Lambda}(x))+\mu_{\ep,\Lambda} \kappa\nonumber\\
		&=\int_{\{ \ep^{-2}(h^2\bar \alpha a\ep/(h^2+r^2)+b)<\zeta_{\ep, \Lambda}< \Lambda \ep^{-2}\}}\zeta_{\ep,\Lambda}  \frac{(h^2+r^2)^2\left(\ep^2\zeta_{\ep, \Lambda}-\frac{h^2\bar \alpha a\ep}{h^2+r^2}-b\right)_+}{2h^2a\ep+ a^2(h^2+r^2)}\mathrm{d} x\nonumber\\
		 &\ \ \ +\int_{\{  \zeta_{\ep, \Lambda}= \Lambda \ep^{-2}\}}\zeta_{\ep,\Lambda}  \psi_{\ep,\Lambda} \mathrm d x- \ep^{-2}\int_{D} \frac{(h^2+r^2)^2\left(\ep^2\zeta_{\ep,\Lambda}(x)-\frac{h^2\bar \alpha a\ep}{h^2+r^2}-b\right)_+^2}{2h^2a\ep+ a^2(h^2+r^2)} \mathrm d x\nonumber\\
		 &\ \  \  - \frac{\alpha}{2} \ln \frac{1}{\ep}\int_{D} |x|^2 \zeta_{\ep,\Lambda} \mathrm{d} x +\mu_{\ep,\Lambda}\kappa \nonumber\\ 
		&=\ep^{-2}\int_{D}  \left(\frac{h^2\bar \alpha a\ep}{h^2+r^2}+b\right)\frac{(h^2+r^2)^2}{2h^2a\ep+ a^2(h^2+r^2)}\left(\ep^2\zeta_{\ep, \Lambda}-\frac{h^2\bar \alpha a\ep}{h^2+r^2}-b\right)_+\mathrm{d} x\nonumber \\
		&\  \ \ +\int_{\{  \zeta_{\ep, \Lambda}= \Lambda \ep^{-2}\}}\zeta_{\ep,\Lambda}  \left[\psi_{\ep,\Lambda}-\frac{(h^2+r^2)^2\left(  \Lambda -\frac{h^2\bar \alpha a\ep}{h^2+r^2}-b\right)_+}{2h^2a\ep+ a^2(h^2+r^2)}\right]\nonumber\\
		&\ \ \ - \frac{\alpha}{2} \ln \frac{1}{\ep}\int_{D} |x|^2 \zeta_{\ep,\Lambda} \mathrm{d} x +\mu_{\ep,\Lambda}\kappa,\nonumber
	\end{align*}	
	from which we obtain 
	\begin{align}\label{4-20}
				&\mu_{\ep,\Lambda} \kappa =2E_\ep(\zeta_{\ep,\Lambda})+\frac{\alpha}{2} \ln \frac{1}{\ep}\int_{D} |x|^2 \zeta_{\ep,\Lambda} \mathrm{d} x\\
				&\ \   \ -\ep^{-2}\int_{D} \left(\frac{h^2\bar \alpha a\ep}{h^2+r^2}+b\right)\frac{(h^2+r^2)^2}{2h^2a\ep+ a^2(h^2+r^2)}\left(\ep^2\zeta_{\ep, \Lambda}-\frac{h^2\bar \alpha a\ep}{h^2+r^2}-b\right)_+\mathrm{d} x -\frac{\Lambda}{\ep^2}\int_{D}  U_{\ep,\Lambda}(x) \mathrm{d} x,\nonumber
	\end{align}
	where $ U_{\ep,\Lambda}(x) =\left[\psi_{\ep,\Lambda}(x)-\frac{(h^2+r^2)^2\left(  \Lambda -\frac{h^2\bar \alpha a\ep}{h^2+r^2}-b\right)_+}{2h^2a\ep+ a^2(h^2+r^2)}\right]_+$. Note that due to \eqref{4-15}, $U_{\ep,\Lambda}(x)=0$ for any $x\in\{ \zeta_{\ep, \Lambda}<\Lambda \ep^{-2}\}$. 
	
	In what follows, we will prove that the last two terms in the right hand side of \eqref{4-20} are uniformly bounded independent of $\ep$ and $\Lambda$.  
	
	By direct calculations, we get 
	\begin{equation*}
	\begin{split}
				\ep^{-2}\int_{D}  & \left(\frac{h^2\bar \alpha a\ep}{h^2+r^2}+b\right)\frac{(h^2+r^2)^2}{2h^2a\ep+ a^2(h^2+r^2)}\left(\ep^2\zeta_{\ep, \Lambda}-\frac{h^2\bar \alpha a\ep}{h^2+r^2}-b\right)_+\mathrm{d} x\\
			\leq& \int_{D} \left(\frac{h^2\bar \alpha a }{h^2+r^2}+b\right)\frac{(h^2+r^2)^2}{  a^2(h^2+r^2)} \zeta_{\ep, \Lambda} \mathrm{d} x\leq C\int_{D}  \zeta_{\ep, \Lambda} \mathrm{d} x=C\kappa.
		\end{split}
	\end{equation*}
    This demonstrates the uniform boundedness of the penultimate term on the right side of \eqref{4-20}.

As for the term $\frac{\Lambda}{\ep^2}\int_{D}  U_{\ep,\Lambda}(x) \mathrm{d} x$,  
by \eqref{4-14} and $\mu_{\ep,\Lambda} \ge - \frac{\alpha R^2}{2}\ln\frac{1}{\ep}-\frac{(h^2+R^2)^2\left(  \Lambda -\frac{h^2\bar \alpha a\ep}{h^2+R^2}-b\right)_+}{2h^2a \ep+ a^2(h^2+R^2)}$ in Lemma \ref{lm4-2}, we have $$\psi_{\ep,\Lambda}(x)-\frac{(h^2+R^2)^2\left(  \Lambda -\frac{h^2\bar \alpha a\ep}{h^2+R^2}-b\right)_+}{2h^2a \ep+ a^2(h^2+R^2)}\leq0 $$ for $ x\in \partial D. $ So $ U_{\ep,\Lambda}\in H^1_0(D). $
 Denote $ A_\varepsilon=\{x\in \Omega\mid U_{\ep,\Lambda}(x)>0\} =\{ \zeta_{\ep, \Lambda}=\Lambda \ep^{-2}\}$. Since $|A_\ep|\frac{\Lambda}{ \ep^2}\le \int_{D}  \zeta_{\ep,\Lambda} \mathrm{d} x=\kappa$, we obtain
  \begin{equation}\label{4-21}
		|A_\ep|\leq \frac{\kappa \ep^2}{\Lambda}. 
	\end{equation}
    
	In view of \eqref{4-13}, we infer from the definition of $\psi_{\ep,\Lambda}$ that 
	\begin{equation}\label{4-22}
		\mathcal{L}_{K}\left[\psi_{\ep,\Lambda}(x)-\frac{(h^2+r^2)^2\left(  \Lambda -\frac{h^2\bar \alpha a\ep}{h^2+r^2}-b\right)_+}{2h^2a\ep+ a^2(h^2+r^2)}\right]= \zeta_{\ep,\Lambda}-\frac{\bar{\alpha}\mathcal{L}_{K}|x|^2}{2}-\mathcal{L}_{K} \frac{(h^2+r^2)^2\left(  \Lambda -\frac{h^2\bar \alpha a\ep}{h^2+r^2}-b\right)_+}{2h^2a\ep+ a^2(h^2+r^2)}. 
	\end{equation}
	Multiplying  $U_{\ep,\Lambda} $ to both sides of \eqref{4-22} and integrating by parts, we get
	\begin{equation}\label{4-23}\begin{split}
					\int_D\nabla U_{\ep,\Lambda} \cdot ( K(x)\nabla U_{\ep,\Lambda})\mathrm d x= \frac{\Lambda}{ \ep^2}\int_D  U_{\ep,\Lambda}\mathrm d x-\frac{\bar{\alpha}}{2}\int_D(\mathcal{L}_{K}|x|^2) U_{\ep,\Lambda} \mathrm d x\\
					-\int_D\left(\mathcal{L}_{K}\frac{(h^2+r^2)^2\left(  \Lambda -\frac{h^2\bar \alpha a\ep}{h^2+r^2}-b\right)_+}{2h^2a\ep+ a^2(h^2+r^2)}\right) U_{\ep,\Lambda} \mathrm d x.
		\end{split}
	\end{equation}
	
Note that $ \sup_{x\in D} |\mathcal{L}_{K}|x|^2|\leq c$ and $ \sup_{x\in D} \left|\mathcal{L}_{K}\frac{(h^2+r^2)^2\left(  \Lambda -\frac{h^2\bar \alpha a\ep}{h^2+r^2}-b\right)_+}{2h^2a\ep+ a^2(h^2+r^2)}\right|\leq c(\Lambda+1)$ for some constant $c$ independent of $\ep $ and $ \Lambda$. Thus we have
	\begin{align}\label{4-24}
		 &\left|  \frac{\Lambda}{ \ep^2}\int_D  U_{\ep,\Lambda}\mathrm d x-\frac{\bar{\alpha}}{2}\int_D(\mathcal{L}_{K}|x|^2) U_{\ep,\Lambda} \mathrm d x 
		-\int_D\left(\mathcal{L}_{K}\frac{(h^2+r^2)^2\left(  \Lambda -\frac{h^2\bar \alpha a\ep}{h^2+r^2}-b\right)_+}{2h^2a\ep+ a^2(h^2+r^2)}\right) U_{\ep,\Lambda} \mathrm d x\right|\nonumber\\
		\leq& \frac{C\Lambda}{ \ep^2}\int_D  U_{\ep,\Lambda}\mathrm d x\nonumber\\
		\leq &\frac{C\Lambda}{ \ep^2} |A_\ep|^{\frac{1}{2}} \left(\int_D  U_{\ep,\Lambda}^2 \mathrm d x\right)^{\frac{1}{2}}\nonumber\\
		\leq & \frac{C\Lambda}{ \ep^2} |A_\ep|^{\frac{1}{2}}  \int_{A_\ep}  |\nabla U_{\ep,\Lambda}| \mathrm d x\nonumber\\
		\leq &  \frac{C\Lambda}{ \ep^2} |A_\ep|  \left(\int_{A_\ep}     |\nabla U_{\ep,\Lambda}|^2 \mathrm d x\right)^{\frac{1}{2}}\nonumber\\
		\leq & C\kappa\left( \int_D\nabla U_{\ep,\Lambda} \cdot ( K(x)\nabla U_{\ep,\Lambda})\right)^{\frac{1}{2}} \mathrm d x,
	\end{align}	
	where we have used \eqref{4-21}, the Sobolev embedding $ W^{1,1}_0(D)\subset L^2(D) $ and  the positive-definiteness of $ K$. Note that the constant $C$ in \eqref{4-24} is independent of $\ep$ and $\Lambda$. 
	
	Combining \eqref{4-23} and \eqref{4-24}, we conclude that $\frac{\Lambda}{\ep^2}\int_{D}  U_{\ep,\Lambda}(x) \mathrm{d} x$ is uniformly bounded with respect to $\varepsilon$ and $\Lambda$. 	The desired result clearly follows from \eqref{4-20} and Lemma \ref{lm4-4}.
\end{proof}

\subsection{Location and diameter of $ \text{supp}(\zeta_{\ep,\Lambda}) $}
Let $\Lambda>\max\{\alpha a+b,1+\pi R^2/\kappa\}$ be a fixed number. Now, we  estimate the diameter and location of the support set of $\zeta_{\ep,\Lambda}$. 
To this end, we denote $ r_{\ep, \Lambda} $ and $ R_{\ep, \Lambda} $ as 
\begin{equation}\label{4-25}
	r_{\ep, \Lambda}=\inf\{|x|\mid x\in \text{supp}(\zeta_{\ep,\Lambda})\},\ \ \text{and}\ \ R_{\ep, \Lambda}=\sup\{|x|\mid x\in \text{supp}(\zeta_{\ep,\Lambda})\}.
\end{equation}
So $ r_{\ep, \Lambda} $ and  $ R_{\ep, \Lambda} $ describe the lower bound and upper bound of the distance between the origin and $ \text{supp}(\zeta_{\ep,\Lambda}) $, respectively. 
We will prove that, the support set of the maximizers $ \zeta_{\ep,\Lambda} $ constructed in Lemma \ref{lm4-2}
must concentrate near some points in $ \partial B_{r_*}(0) $ as $\varepsilon$ tends to 0. 
The steps of proof are as follows: estimates of $ \lim\limits_{\varepsilon\to 0^+}r_{\ep, \Lambda} $, estimates of $ \lim\limits_{\varepsilon\to 0^+}R_{\ep, \Lambda} $ and estimates of diameter and limiting location of $  \text{supp}(\zeta_{\ep,\Lambda}) $.

Using the expansion of Green's function $ G_K(x,y) $ in Lemma \ref{lm4-1} and the theory of rearrangement function, we have
\begin{lemma}\label{lm4-6}
It holds that	$\lim\limits_{\varepsilon\to 0^+}r_{\ep, \Lambda}=r_*$.
\end{lemma}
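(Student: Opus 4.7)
The plan is to pull out a point $x_0\in\mathrm{supp}(\zeta_{\ep,\Lambda})$ with $|x_0|=r_{\ep,\Lambda}$ (which by the rotational symmetry of $D$ may be taken to be $(r_{\ep,\Lambda},0)$ up to a rotation) and to sandwich $\mathcal{G}_K\zeta_{\ep,\Lambda}(x_0)$ between two expressions whose comparison forces $r_{\ep,\Lambda}\to r_*$. Since $\zeta_{\ep,\Lambda}>0$ in an essential neighbourhood of $x_0$, the profile conditions \eqref{4-15} (both the $\{0<\zeta_{\ep,\Lambda}<\Lambda/\ep^2\}$ and the $\{\zeta_{\ep,\Lambda}=\Lambda/\ep^2\}$ branches) yield $\psi_{\ep,\Lambda}\ge 0$ there, which together with the continuity of $\mathcal{G}_K\zeta_{\ep,\Lambda}$ and $\bar\alpha=\alpha|\ln\ep|$ gives the lower bound
\begin{equation*}
\mathcal{G}_K\zeta_{\ep,\Lambda}(x_0)\ge \frac{\alpha r_{\ep,\Lambda}^2}{2}\ln\frac{1}{\ep}+\mu_{\ep,\Lambda}.
\end{equation*}

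For a matching upper bound I invoke the Green's-function decomposition of Lemma~\ref{lm4-1}: near the diagonal,
\begin{equation*}
G_K(x_0,y)=\frac{1}{2\pi\sqrt{\det K(x_0)}}\ln\frac{1}{|T_{x_0}(x_0-y)|}+O(1),
\end{equation*}
with the $O(1)$ term absorbing $H_0$ and the Hölder continuity of $T_\cdot$ and of $\sqrt{\det K}$. Exploiting the pointwise bound $\zeta_{\ep,\Lambda}\le\Lambda/\ep^2$ and the mass constraint $\int_D\zeta_{\ep,\Lambda}\,\mathrm dx=\kappa$, I change variables to $z=T_{x_0}(x_0-y)$ (whose Jacobian equals $\sqrt{\det K(x_0)}$) and apply the Hardy--Littlewood rearrangement inequality, which packs the mass into the $z$-ball on which the log kernel is largest. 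A short calculation then yields
\begin{equation*}
\mathcal{G}_K\zeta_{\ep,\Lambda}(x_0)\le \frac{\kappa}{2\pi\sqrt{\det K(x_0)}}\ln\frac{1}{\ep}+C.
\end{equation*}

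Combining the two bounds with the refined estimate of $\mu_{\ep,\Lambda}$ from Lemma~\ref{lm4-5}, dividing by $\ln\frac{1}{\ep}$, and using the identity $Y(x_0)=\frac{\kappa}{2\pi\sqrt{\det K(x_0)}}-\alpha r_{\ep,\Lambda}^2$, I obtain
\begin{equation*}
Y(x_0)+\frac{\alpha r_{\ep,\Lambda}^2}{2}\ge Y((r_*,0))+\frac{\alpha}{2\kappa}\int_D|x|^2\zeta_{\ep,\Lambda}\,\mathrm dx+o(1).
\end{equation*}
Since $|x|\ge r_{\ep,\Lambda}$ on $\mathrm{supp}(\zeta_{\ep,\Lambda})$, the moment inequality $\int_D|x|^2\zeta_{\ep,\Lambda}\,\mathrm dx\ge\kappa\, r_{\ep,\Lambda}^2$ cancels the excess term $\frac{\alpha r_{\ep,\Lambda}^2}{2}$ on the left, leaving $Y(x_0)\ge Y((r_*,0))+o(1)$. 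By Lemma~\ref{lm4-3}, $Y((r_*,0))=\max_D Y$ is attained precisely on the circle $|x|=r_*$ and the radial function $r\mapsto Y((r,0))$ is smooth with a nondegenerate maximum there, so $r_{\ep,\Lambda}\to r_*$ follows.

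The only delicate step is extracting the sharp upper bound on $\mathcal{G}_K\zeta_{\ep,\Lambda}(x_0)$ with the correct leading constant $\frac{\kappa}{2\pi\sqrt{\det K(x_0)}}$: the anisotropy of $G_K$ must be absorbed through the linear change of variables $z=T_{x_0}(x_0-y)$, and the smooth variation of $\sqrt{\det K(y)}^{-1}$ near $x_0$ has to be swept into an $O(1)$ remainder so that the coefficient of $\ln\frac{1}{\ep}$ emerges correctly. Beyond that, the argument is a direct assembly of Lemmas~\ref{lm4-1}, \ref{lm4-3}, \ref{lm4-5}, the bathtub principle from Lemma~\ref{lm4-2}, and the trivial moment bound $\int|x|^2\zeta_{\ep,\Lambda}\ge\kappa r_{\ep,\Lambda}^2$.
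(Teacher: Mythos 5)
Your proof is correct, and it reaches the conclusion by a slightly streamlined variant of the paper's argument. The common core is identical: select $x_0\in\mathrm{supp}(\zeta_{\ep,\Lambda})$ with $|x_0|=r_{\ep,\Lambda}$, use the profile conditions \eqref{4-15} together with continuity of $\psi_{\ep,\Lambda}$ to get $\mathcal{G}_K\zeta_{\ep,\Lambda}(x_0)\ge\frac{\bar\alpha}{2}r_{\ep,\Lambda}^2+\mu_{\ep,\Lambda}$, bound $\mathcal{G}_K\zeta_{\ep,\Lambda}(x_0)$ from above by the Green's-function decomposition of Lemma~\ref{lm4-1} plus a rearrangement argument, insert the refined lower bound on $\mu_{\ep,\Lambda}$ from Lemma~\ref{lm4-5}, divide by $\ln\frac1\varepsilon$, and use the moment inequality $\int_D|x|^2\zeta_{\ep,\Lambda}\ge\kappa r_{\ep,\Lambda}^2$ to arrive at $Y(x_0)\ge Y((r_*,0))+o(1)$, whence Lemma~\ref{lm4-3} concludes.

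Where you diverge from the paper is in the upper bound on $\mathcal{G}_K\zeta_{\ep,\Lambda}(x_0)$. The paper splits the integral at $|T_{x_0}(y-x_0)|=\ep^\gamma$ for a free parameter $\gamma\in(0,1)$: the far region contributes at most $\frac{\sqrt{\det K((R,0))}^{-1}\kappa\gamma}{2\pi}\ln\frac1\varepsilon+C$ and the near region at most $\frac{\sqrt{\det K(x_0)}^{-1}}{2\pi}\ln\frac1\varepsilon\int_{\text{near}}\zeta_{\ep,\Lambda}+C(1+\ln\Lambda)$, yielding an inequality \eqref{4-29} with a residual $O(\gamma)$ error that is sent to zero at the very end. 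You instead split at a fixed macroscopic scale $\delta_0$, so the far-field is outright $O(1)$, and after the change of variables $z=T_{x_0}(x_0-y)$ you apply the Hardy--Littlewood/bathtub bound to all the mass (using $\int\zeta_{\ep,\Lambda}\le\kappa$ and $\zeta_{\ep,\Lambda}\le\Lambda/\ep^2$), obtaining directly $\mathcal{G}_K\zeta_{\ep,\Lambda}(x_0)\le\frac{\kappa}{2\pi\sqrt{\det K(x_0)}}\ln\frac1\varepsilon+C$ with no $\gamma$. For $y$ in the fixed near-field the $O(|y-x_0|)$ errors from the variable coefficient $\sqrt{\det K(y)}^{-1}$ and from replacing $\frac{T_{x_0}+T_y}{2}$ by $T_{x_0}$ produce a term $O(|y-x_0|\ln\frac1{|y-x_0|})$ which is uniformly bounded, so it is indeed absorbed into the additive $O(1)$ as you claim. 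The payoff of your approach is a cleaner proof of this particular lemma without the $\gamma\to0$ limit; the paper's $\ep^\gamma$-splitting, however, delivers the intermediate estimate \eqref{4-29} (with explicit control on the near-field mass) which is re-used verbatim in the proofs of Lemmas~\ref{lm4-7}, \ref{lm4-8} and \ref{lm4-9}, so the authors have economized the computation across several lemmas. Both routes are valid; yours trades reusability for directness. A small remark: you invoke nondegeneracy of the radial maximum of $Y$ at $r_*$; in fact uniqueness and continuity from Lemma~\ref{lm4-3} already suffice, though nondegeneracy does hold.
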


\begin{proof}
	Let  $\gamma\in(0,1)$ be a fixed number. By Lemma \ref{lm4-2}, for any  $x\in {supp}(\zeta_{\ep,\Lambda})$, we have $$\mathcal{G}_{K}\zeta_{\ep,\Lambda}(x)-\frac{\alpha|x|^2}{2}\ln{\frac{1}{\varepsilon}}\ge \mu_{\ep,\Lambda}.$$
	Note that
	\begin{equation*}
		\begin{split}
			\mathcal{G}_{K}\zeta_{\ep,\Lambda}(x)&= \int_{D}G_{K}(x,y)\zeta_{\ep,\Lambda}(y)\mathrm d y\\
			&=\left (\int_{D\cap\{|T_{x}(y-x)|>\varepsilon^\gamma\}}+\int_{D\cap\{|T_{x}(y-x)|\le\varepsilon^\gamma\}}\right )G_{K}(x,y)\zeta_{\ep,\Lambda}(y)\mathrm d y.
		\end{split}
	\end{equation*}
	
	On the one hand, it follows from Lemma \ref{lm4-1} that $  \max_{x,y\in D} H_0(x,y)\le C $, which implies that $ \int_D H_0(x,y)\zeta_{\ep,\Lambda}(y)\mathrm d y\leq C $.  Direct computation yields that
	\begin{equation*}
		\sqrt{\det K(y)}^{-1}\leq \sqrt{\det K((R,0))}^{-1} \ \ \ \ \ \ \ \ \forall\ y\in D,
	\end{equation*}
	\begin{equation*}
		\bigg|\frac{T_{x }+T_y}{2}(y-x )\bigg|\geq C\varepsilon^\gamma\ \ \  \ \ \ \ \ \forall\ |T_{x }(y-x )|>\varepsilon^\gamma.
	\end{equation*}
	Thus by the decomposition  of $ G_{K} $ in  Lemma \ref{lm4-1} , we have
	\begin{equation}\label{4-26}
		\begin{split}
			&\int_{D\cap\{|T_{x}(y-x)|>\varepsilon^\gamma\}}G_{K}(x,y)\zeta_{\ep,\Lambda}(y)\mathrm d y\\
			&\ \ \leq\int\limits_{D\cap\{|T_{x}(y-x)|>\varepsilon^\gamma\}}\frac{\sqrt{\det K(x)}^{-1}+\sqrt{\det K(y)}^{-1}}{2}\Gamma\left (\frac{T_{x}+T_y}{2}\left (x-y\right )\right )\zeta_{\ep,\Lambda}(y)\mathrm d y+C\\
			&\ \ \le \frac{\sqrt{\det K((R,0))}^{-1}}{2\pi} \int\limits_{D\cap\{|T_{x}(y-x)|>\varepsilon^\gamma\}}\ln\frac{1}{\varepsilon^\gamma}\zeta_{\ep,\Lambda}(y)\mathrm d y+C\\
			&\ \ \le \frac{\sqrt{\det K((R,0))}^{-1}\kappa\gamma}{2\pi}\ln\frac{1}{\varepsilon}+C,
		\end{split}
	\end{equation}
where $C$ is a constant independent of $\ep$ and $\Lambda$.

	On the other hand, by the smoothness and positive-definiteness of $ K $, it is not hard to get that
	\begin{equation*}
		\sqrt{\det K(y)}^{-1}\leq \sqrt{\det K(x)}^{-1}+C\varepsilon^\gamma,\ \ \ \ \forall\ y\in D\cap\{|T_{x}(y-x)|\leq\varepsilon^\gamma\},
	\end{equation*}
\begin{equation*}
	\Gamma\left (\frac{T_x+T_y}{2}(x-y)\right)=\Gamma\left (T_x(x-y)\right)+O(\varepsilon^\gamma),\ \ \ \ \forall\ y\in D\cap\{|T_{x}(y-x)|\leq\varepsilon^\gamma\}, y\neq x		.
\end{equation*}
	Hence we get
	\begin{equation*}
		\begin{split}
			&\int_{D\cap\{|T_{x}(y-x)|\leq\varepsilon^\gamma\}}G_{K}(x,y)\zeta_{\ep,\Lambda}(y)\mathrm d y\\
			&\ \ \leq \left (\sqrt{\det K(x)}^{-1}+C\varepsilon^\gamma\right )\int\limits_{D\cap\{|T_{x}(y-x)|\leq\varepsilon^\gamma\}}\Gamma\left (\frac{T_{x}+T_y}{2}\left (x-y\right )\right )\zeta_{\ep,\Lambda}(y)\mathrm d y+C\\
			&\ \ \leq \left (\sqrt{\det K(x)}^{-1}+C\varepsilon^\gamma\right )\int\limits_{D\cap\{|T_{x}(y-x)|\leq\varepsilon^\gamma\}}\Gamma\left (T_{x}\left (x-y\right )\right )\zeta_{\ep,\Lambda}(y)\mathrm d y+C\\
			&=\ \ \left (\sqrt{\det K(x)}^{-1}+C\varepsilon^\gamma\right )\int_{|z|\leq\varepsilon^\gamma}\Gamma(z)\zeta_{\ep,\Lambda}(T_{x}^{-1}z+x)\sqrt{\det K(x)}dz+C.
		\end{split}
	\end{equation*}
	
	Denote $ \zeta_{\ep,\Lambda}^*(z)=\frac{\Lambda}{\varepsilon^2}\textbf{1}_{B_{t_{\ep,\Lambda}}(0)} $ the non-negative decreasing rearrangement function of $ \zeta_{\ep,\Lambda}(T_{x}^{-1}z+x)\textbf{1}_{\{|z|\leq \varepsilon^\gamma\}} $. Here we choose $ t_{\ep,\Lambda}>0 $ such that $$ \int\limits_{|z|\leq\varepsilon^\gamma} \zeta_{\ep,\Lambda}(T_{x}^{-1}z+x) dz=\int\limits_{\mathbb{R}^2} \zeta_{\ep,\Lambda}^* dz =\frac{\pi\Lambda t_{\ep,\Lambda}^2}{\ep^2}.$$ By the rearrangement inequality and the fact $\int\limits_{|z|\leq\varepsilon^\gamma} \zeta_{\ep,\Lambda}(T_{x}^{-1}z+x) dz\leq \sqrt{\det K(x)}^{-1}\kappa$, we have
	\begin{equation*}
		\begin{split}
			\int_{|z|\leq\varepsilon^\gamma}\Gamma(z)\zeta_{\ep,\Lambda}(T_{x}^{-1}z+x)dz\le&\int_{\mathbb{R}^2} \Gamma(z)\zeta_{\ep,\Lambda}^*(z) dz\\
			\leq&\frac{1}{2\pi}\ln\frac{1}{\varepsilon}\int_{|z|\leq\varepsilon^\gamma}\zeta_{\ep,\Lambda}(T_{x}^{-1}z+x) dz+C(1+\ln\Lambda),
		\end{split}
	\end{equation*}
	from which we deduce that
	\begin{align}\label{4-27}
			&\int_{D\cap\{|T_{x}(y-x)|\leq\varepsilon^\gamma\}}G_{K}(x,y)\zeta_{\ep,\Lambda}(y)\mathrm d y\nonumber\\
			&\ \ \le\left (\sqrt{\det K(x)}^{-1}+C\varepsilon^\gamma\right )\frac{1}{2\pi}\ln\frac{1}{\varepsilon}\int\limits_{|z|\leq\varepsilon^\gamma}\zeta_{\ep,\Lambda}(T_{x}^{-1}z+x)\sqrt{\det K(x)}dz+C(1+\ln\Lambda)\nonumber\\
			&\ \ \le	 \left (\sqrt{\det K(x)}^{-1}+C\varepsilon^\gamma\right )\frac{1}{2\pi}\ln\frac{1}{\varepsilon}\int\limits_{D\cap\{|T_{x}(y-x)|\leq\varepsilon^\gamma\}}\zeta_{\ep,\Lambda}\mathrm d y+C(1+\ln\Lambda)\nonumber\\
			&\ \ \le	  \frac{\sqrt{\det K(x)}^{-1}}{2\pi}\ln\frac{1}{\varepsilon}\int\limits_{D\cap\{|T_{x}(y-x)|\leq\varepsilon^\gamma\}}\zeta_{\ep,\Lambda}\mathrm d y+C(1+\ln\Lambda),
	\end{align}
where $C$ is a constant independent of $\ep$, $\gamma$ and $\Lambda$.
	Therefore by Lemma \ref{lm4-5}, \eqref{4-26} and \eqref{4-27}, we conclude that for any $x\in supp(\zeta_{\ep,\Lambda})$ and $ \gamma\in(0,1) $, there holds
	\begin{equation}\label{4-28}
		\begin{split}
			&\frac{\sqrt{\det K(x)}^{-1}}{2\pi}\ln\frac{1}{\varepsilon} \int\limits_{D\cap\{|T_{x}(y-x)|\leq\varepsilon^\gamma\}}\zeta_{\ep,\Lambda}\mathrm d y+\frac{\sqrt{\det K((R,0))}^{-1}\kappa\gamma}{2\pi}\ln\frac{1}{\varepsilon}-\frac{\alpha|x|^2}{2}\ln{\frac{1}{\varepsilon}} \\
			&\ \ \ge  Y((r_*,0))\ln\frac{1}{\varepsilon}+ \frac{\alpha}{2\kappa}\ln\frac{1}{\varepsilon}\int_D |x|^2 \zeta_{\ep,\Lambda} \mathrm d x-C(1+\ln\Lambda).
		\end{split}
	\end{equation}
	Dividing both sides of the above inequality by $\ln\frac{1}{\varepsilon}$, we obtain
	\begin{align}\label{4-29}
			Y(x)\ge& \frac{\sqrt{\det K(x)}^{-1}}{2\pi}\int\limits_{D\cap\{|T_{x}(y-x)|\leq\varepsilon^\gamma\}}\zeta_{\ep,\Lambda}\mathrm d y-\alpha|x|^2\\
			\ge &  Y((r_*,0))+\frac{\alpha}{2\kappa}\left \{\int_D |x|^2 \zeta_{\ep,\Lambda} \mathrm d x-\kappa|x|^2\right \}-\frac{\sqrt{\det K((R,0))}^{-1}\kappa\gamma}{2\pi}-\frac{C(1+\ln\Lambda)}{\ln\frac{1}{\varepsilon}}.\nonumber
	\end{align}

		Note that the point  $x\in {supp}(\zeta_{\ep,\Lambda})$ is arbitrary. So we may take $ x =x_\varepsilon$ be a point such that $$	|x_\varepsilon|\leq r_{\ep, \Lambda}+\ep.$$
		The definition of $r_{\ep, \Lambda}$ yields
		\begin{equation*}
			\int_D |x_\ep|^2 \zeta_{\ep,\Lambda} \mathrm d x \ge \kappa (r_{\ep, \Lambda}) ^2.
		\end{equation*}
		Then letting $\varepsilon$ tend to $0^+$, we deduce from $\eqref{4-29}$ that
	\begin{equation}\label{4-30}
		\liminf_{\varepsilon\to 0^+}Y(x_\varepsilon)\ge Y((r_*,0))-\sqrt{\det K((R,0))}^{-1}d\gamma/(2\pi).
	\end{equation}
	Hence we get the desired result by letting $\gamma \to 0$ and using Lemma \ref{lm4-3}.
\end{proof}

Next, we estimate the moment of inertia $\mathcal{I}(\zeta_{\ep,\Lambda})=\frac{1}{2} \int_D|x|^2\zeta_{\ep,\Lambda} \mathrm d x $. We have
\begin{lemma} \label{lm4-7} 
There holds
	\begin{equation}\label{4-31}
		\lim_{\varepsilon\to 0^+}\mathcal{I}(\zeta_{\ep,\Lambda})=\frac{1}{2}\lim_{\varepsilon\to 0^+}\int_D|x|^2\zeta_{\ep,\Lambda} \mathrm d x= \frac{\kappa}{2} r_*^2.
	\end{equation}
	As a consequence, for any $\eta>0$, it holds
	\begin{equation}\label{4-32}
		\lim_{\varepsilon\to 0^+}\int_{ \{x\in D\mid |x|\ge r_*+\eta\}}\zeta_{\ep,\Lambda} \mathrm d x=0.
	\end{equation}
\end{lemma}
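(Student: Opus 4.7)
The strategy is to re-use the pointwise inequality \eqref{4-29} obtained during the proof of Lemma \ref{lm4-6}, integrate it against the measure $\zeta_{\ep,\Lambda}(x)\,\mathrm d x$ so that a term involving the moment of inertia cancels, and then exploit the fact (Lemma \ref{lm4-3}) that $Y$ attains its maximum on $D$ only on the circle $\partial B_{r_*}(0)$. Throughout the argument we use that $\int_D\zeta_{\ep,\Lambda}\,\mathrm d x=\kappa$ and that the support of $\zeta_{\ep,\Lambda}$ lies in $D=B_R(0)$, so that $|x|^2\le R^2$ pointwise on $\text{supp}(\zeta_{\ep,\Lambda})$.

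First, fix $\gamma\in(0,1)$. For every $x\in\text{supp}(\zeta_{\ep,\Lambda})$ inequality \eqref{4-29} gives
\begin{equation*}
Y(x)\ge Y((r_*,0))+\frac{\alpha}{2\kappa}\Bigl\{\int_D |y|^2\zeta_{\ep,\Lambda}(y)\,\mathrm d y-\kappa|x|^2\Bigr\}-\frac{\sqrt{\det K((R,0))}^{-1}\kappa\gamma}{2\pi}-\frac{C(1+\ln\Lambda)}{\ln\frac{1}{\ep}}.
\end{equation*}
Multiplying this inequality by $\zeta_{\ep,\Lambda}(x)$, integrating over $D$, and using $\int_D\zeta_{\ep,\Lambda}\,\mathrm d x=\kappa$, the two moment-of-inertia contributions cancel and I obtain
\begin{equation*}
\frac{1}{\kappa}\int_D Y(x)\zeta_{\ep,\Lambda}(x)\,\mathrm d x\ge Y((r_*,0))-C\gamma-\frac{C(1+\ln\Lambda)}{\ln\frac{1}{\ep}}.
\end{equation*}
Letting $\ep\to 0^+$ first (with $\Lambda,\gamma$ fixed) and then $\gamma\to 0^+$, together with the trivial upper bound $\frac{1}{\kappa}\int_D Y\zeta_{\ep,\Lambda}\,\mathrm d x\le\max_D Y=Y((r_*,0))$ (which follows from $Y\le Y((r_*,0))$ on $D$ and $\int\zeta_{\ep,\Lambda}=\kappa$), yields
\begin{equation*}
\lim_{\ep\to 0^+}\frac{1}{\kappa}\int_D Y(x)\zeta_{\ep,\Lambda}(x)\,\mathrm d x=Y((r_*,0)).
\end{equation*}

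Next I translate this into mass concentration near $\partial B_{r_*}(0)$. By Lemma \ref{lm4-3} and continuity of $Y$, for every $\eta>0$ there exists $\delta=\delta(\eta)>0$ such that $Y(x)\le Y((r_*,0))-\delta$ on $A_\eta:=\{x\in D:||x|-r_*|\ge\eta\}$. Splitting the integral over $D\setminus A_\eta$ and $A_\eta$ and using $Y\le Y((r_*,0))$ everywhere gives
\begin{equation*}
\delta\int_{A_\eta}\zeta_{\ep,\Lambda}\,\mathrm d x\le \kappa Y((r_*,0))-\int_D Y(x)\zeta_{\ep,\Lambda}\,\mathrm d x\longrightarrow 0,
\end{equation*}
so $\int_{A_\eta}\zeta_{\ep,\Lambda}\,\mathrm d x\to 0$; since $\{|x|\ge r_*+\eta\}\subseteq A_\eta$, this proves \eqref{4-32}. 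Finally, splitting $\int_D|x|^2\zeta_{\ep,\Lambda}\,\mathrm d x$ along the same decomposition and using the uniform bound $|x|^2\le R^2$ on $A_\eta$, I get
\begin{equation*}
(r_*-\eta)^2\Bigl(\kappa-\smallint_{A_\eta}\zeta_{\ep,\Lambda}\Bigr)\le\int_D|x|^2\zeta_{\ep,\Lambda}\,\mathrm d x\le (r_*+\eta)^2\Bigl(\kappa-\smallint_{A_\eta}\zeta_{\ep,\Lambda}\Bigr)+R^2\smallint_{A_\eta}\zeta_{\ep,\Lambda},
\end{equation*}
and letting first $\ep\to 0^+$ and then $\eta\to 0^+$ gives \eqref{4-31}.

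The only non-routine step is the integration of \eqref{4-29} against $\zeta_{\ep,\Lambda}$: the clean cancellation of the moment-of-inertia term requires one to observe that the $|x|^2$ on the right of \eqref{4-29} is a function of the integration variable, so that $\int\bigl(\int|y|^2\zeta-\kappa|x|^2\bigr)\zeta(x)\,\mathrm d x=0$. Everything else is a direct consequence of the uniqueness (up to rotation) of the maximizer of $Y$ established in Lemma \ref{lm4-3}.
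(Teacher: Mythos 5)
Your proof is correct, but it takes a genuinely different route from the paper's. The paper substitutes the specific point $x=x_\varepsilon$ (the almost-minimizer of $|x|$ over $\mathrm{supp}(\zeta_{\ep,\Lambda})$, inherited from the proof of Lemma \ref{lm4-6}) into \eqref{4-29}, combines the resulting upper bound on $\int_D|y|^2\zeta_{\ep,\Lambda}\,\mathrm dy-\kappa|x_\varepsilon|^2$ with the trivial lower bound $0$, and then invokes Lemma \ref{lm4-6} ($|x_\varepsilon|\to r_*$) to obtain \eqref{4-31} first, deducing \eqref{4-32} afterward. You instead average \eqref{4-29} against the measure $\zeta_{\ep,\Lambda}\,\mathrm dx$, exploit the cancellation $\int_D\bigl(\int_D|y|^2\zeta_{\ep,\Lambda}\,\mathrm dy-\kappa|x|^2\bigr)\zeta_{\ep,\Lambda}(x)\,\mathrm dx=0$, and combine with the pointwise bound $Y\le Y((r_*,0))$ from Lemma \ref{lm4-3} to obtain $\tfrac1\kappa\int_D Y\zeta_{\ep,\Lambda}\to\max_D Y$; the strict-maximizer property of the circle $|x|=r_*$ then forces the mass to concentrate on the annulus $\{||x|-r_*|<\eta\}$, giving \eqref{4-32} first and then \eqref{4-31}. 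Your version avoids any explicit use of the conclusion of Lemma \ref{lm4-6} (only its intermediate estimate \eqref{4-29}), makes the variational mechanism — the vorticity seeks out the maximizer of $Y$ — completely transparent, and yields slightly more (concentration of mass on the full annulus, i.e.\ also from the inside, not just \eqref{4-32}); the paper's version is shorter precisely because it leans on the already-proved $\lim r_{\ep,\Lambda}=r_*$.
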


\begin{proof}
	Let $ x=x_\varepsilon\in D$   in $\eqref{4-29}$, we know that for any $\gamma\in(0,1)$ 
	\begin{equation*}
		0\le  \liminf_{\varepsilon\to 0^+}\left [\int_D |x|^2 \zeta_{\ep,\Lambda} \mathrm d x - \kappa  (x_\varepsilon) ^2\right ]\le  \limsup_{\varepsilon\to 0^+}\left [\int_D |x|^2 \zeta_{\ep,\Lambda} \mathrm d x - \kappa (x_\varepsilon) ^2\right ]\le \frac{\sqrt{\det K((R,0))}^{-1}\kappa^2\gamma}{\alpha\pi}.
	\end{equation*}
	Combining this with Lemma \ref{lm4-6}, we find
	\begin{equation*}
		\lim_{\varepsilon\to 0^+}\int_D |x|^2 \zeta_{\ep,\Lambda} \mathrm d x   =\lim_{\varepsilon\to 0^+}\kappa (x_\varepsilon) ^2  =\kappa r_*^2.
	\end{equation*}
	
	Then \eqref{4-32} follows immediately from $\eqref{4-31}$ and Lemma \ref{lm4-6}. The proof is hence complete.
\end{proof}

Lemma \ref{lm4-7} implies that  the   moment of inertia of $ \zeta_{\varepsilon, \Lambda} $ will tend to $ \kappa r_*^2/2  $ and for any $ \eta>0 $ the mass of $ \zeta_{\varepsilon, \Lambda} $  on the set $  \{x\in D \mid |x|\ge r_*+\eta\} $   tends to 0 as $ \varepsilon\to 0^+ $. Based on this, we can then get limits of $ R_{\ep, \Lambda}. $
\begin{lemma}\label{lm4-8}
There holds 
	$\lim\limits_{\varepsilon\to 0^+}R_{\ep, \Lambda}=r_*$.
\end{lemma}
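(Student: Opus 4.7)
The plan is to prove $\lim_{\varepsilon\to 0^+} R_{\ep,\Lambda} = r_*$ by contradiction, recycling inequality \eqref{4-28} together with Lemma~\ref{lm4-7}. Since $R_{\ep,\Lambda}\ge r_{\ep,\Lambda}$, Lemma~\ref{lm4-6} already yields $\liminf_{\varepsilon\to 0^+} R_{\ep,\Lambda}\ge r_*$, so it suffices to show $\limsup_{\varepsilon\to 0^+} R_{\ep,\Lambda}\le r_*$.

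Suppose for contradiction that $R_0:=\limsup_{\varepsilon\to 0^+} R_{\ep,\Lambda} > r_*$. Pass to a subsequence along which $R_{\ep,\Lambda}\to R_0$, and pick $x_\varepsilon\in\text{supp}(\zeta_{\ep,\Lambda})$ with $|x_\varepsilon|\ge R_{\ep,\Lambda}-\varepsilon$, so that $|x_\varepsilon|\to R_0 > r_*$. I would apply inequality \eqref{4-28} (which was derived for an arbitrary $x\in\text{supp}(\zeta_{\ep,\Lambda})$) with $x=x_\varepsilon$ and some $\gamma\in(0,1)$.

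The crucial step is to show that the local mass term $\int_{D\cap\{|T_{x_\varepsilon}(y-x_\varepsilon)|\le\varepsilon^\gamma\}}\zeta_{\ep,\Lambda}\,dy$ on the left-hand side of \eqref{4-28} vanishes as $\varepsilon\to 0$. Since $T_{x_\varepsilon}$ is uniformly positive definite on $D$, this set is contained in a Euclidean ball of radius $C\varepsilon^\gamma$ around $x_\varepsilon$. For $\varepsilon$ sufficiently small such a ball is contained in $\{|y|\ge r_*+\eta\}$ for some fixed $\eta>0$ (since $|x_\varepsilon|\to R_0>r_*$), and \eqref{4-32} from Lemma~\ref{lm4-7} then forces the $\zeta_{\ep,\Lambda}$-mass of this set to tend to zero. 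Dividing \eqref{4-28} by $\ln(1/\varepsilon)$, sending $\varepsilon\to 0$, and using the identity $\int_D|y|^2\zeta_{\ep,\Lambda}\,dy\to\kappa r_*^2$ from \eqref{4-31}, then letting $\gamma\to 0$, yields
\begin{equation*}
-\frac{\alpha R_0^2}{2} \ \ge\ Y((r_*,0)) + \frac{\alpha r_*^2}{2}.
\end{equation*}
A direct computation with $\alpha=\kappa/(4\pi h\sqrt{h^2+r_*^2})$ gives
\begin{equation*}
Y((r_*,0)) = \frac{\kappa\sqrt{h^2+r_*^2}}{2\pi h}-\alpha r_*^2 = \frac{\kappa(2h^2+r_*^2)}{4\pi h\sqrt{h^2+r_*^2}} > 0,
\end{equation*}
so the right-hand side is strictly positive while the left-hand side is negative. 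This contradiction rules out $R_0>r_*$ and, combined with Lemma~\ref{lm4-6}, proves $R_{\ep,\Lambda}\to r_*$.

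The main (modest) obstacle is the vanishing of the local mass integral appearing in \eqref{4-28}: once one observes that the shrinking ellipsoidal ball around $x_\varepsilon$ eventually lies in the annular region $\{|y|\ge r_*+\eta\}$ where Lemma~\ref{lm4-7} forces vanishing mass, the rest is a routine passage to the limit in the inequality already established during the proof of Lemma~\ref{lm4-6}.
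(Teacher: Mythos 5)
Your proof is correct and follows essentially the same route as the paper: you evaluate the key inequality \eqref{4-28} (equivalently \eqref{4-29}) at a support point of near-maximal modulus and combine it with Lemma \ref{lm4-7}. The only cosmetic difference is that the paper keeps the local mass term and derives a positive lower bound for it, which is then contradicted by \eqref{4-32}, whereas you insert the vanishing of that term first and reach a sign contradiction using $Y((r_*,0))>0$.
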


\begin{proof}
	Clearly, $ \liminf\limits_{\varepsilon\to 0^+}R_{\ep, \Lambda}\geq\lim\limits_{\varepsilon\to 0^+}r_{\ep, \Lambda} =r_*  $. Note that  for any $x\in D$  $$ 0<\sqrt{\det K(x)}^{-1}\leq \sqrt{\det K((R,0))}^{-1}<+\infty. $$ 
Let $ \hat{X}_\varepsilon $ be a point in $ supp(\zeta_{\ep,\Lambda}) $ such that $$|\hat{X}_\varepsilon|\geq R_{\ep, \Lambda}-\ep.$$	Taking $x =\hat{X}_\varepsilon $ in \eqref{4-29},  we obtain
	\begin{equation*}
		\begin{split}
			&\frac{\sqrt{\det K((R,0))}^{-1}}{2\pi}\liminf_{\varepsilon\to 0^+}\int\limits_{D\cap\{|T_{\hat{X}_\ep}(y-\hat{X}_\ep)|\leq\varepsilon^\gamma\}}\zeta_{\ep,\Lambda}\mathrm d y\\
			&\ \ \ge  Y((r_*,0))+ \frac{\alpha}{2}\liminf_{\varepsilon\to 0^+}(R_{\ep, \Lambda})^2+\frac{ \alpha r_*^2}{2}-\frac{\sqrt{\det K((R,0))}^{-1}\kappa\gamma}{2\pi}  \\
			&\ \ \ge	 \frac{\kappa \sqrt{\det K((r_*,0))}^{-1}}{2\pi}-\frac{\sqrt{\det K((R,0))}^{-1}\kappa\gamma}{2\pi}.
		\end{split}
	\end{equation*}
	Hence, we get
	\begin{equation*}
		\liminf_{\varepsilon\to 0^+}\int\limits_{D\cap\{|T_{\hat{X}_\ep}(y-\hat{X}_\ep)|\leq\varepsilon^\gamma\}}\zeta_{\ep,\Lambda}\mathrm d y \ge \frac{\kappa \sqrt{\det K((r_*,0))}^{-1}}{\sqrt{\det K((R,0))}^{-1}}-\kappa\gamma.
	\end{equation*}
	So by choose $ \gamma $ small such that $ 0<\gamma<\frac{ \sqrt{\det K((r_*,0))}^{-1}}{\sqrt{\det K((R,0))}^{-1}} $, we arrive at	 $$ \liminf\limits_{\varepsilon\to 0^+}\int\limits_{D\cap\{|T_{\hat{X}_\ep}(y-\hat{X}_\ep)|\leq\varepsilon^\gamma\}}\zeta_{\ep,\Lambda}\mathrm d y \ge C_0>0, $$ 
	which combined with \eqref{4-32} immediately yields the conclusion of this lemma.

\end{proof}

From Lemmas \ref{lm4-6} and  \ref{lm4-8}, we know that $ supp(\zeta_{\ep,\Lambda}) $ locates in a narrow annular domain whose centerline is $ \partial B_{r_*}(0) $. That is, for any $ x_\ep\in supp(\zeta_{\ep,\Lambda}) $
\begin{equation*}
	\lim_{\varepsilon\to 0^+}|x_\ep|=r_*.
\end{equation*}
Based on this, we can further refine the estimates of $ supp(\zeta_{\ep,\Lambda}) $. We can prove that, for any $ \ep>0 $ small the diameter of the support set of $ \zeta_{\ep,\Lambda} $  is the order of $ \varepsilon^{\gamma} $ for any $ \gamma\in(0, 1) $.
\begin{lemma}\label{lm4-9}
	Let $\gamma\in(0, 1)$ be a fixed number. Then 
	there exists an $\ep_\Lambda\in(0,1)$ depending on $\Lambda$ such that 
	$$ diam\big({supp}(\zeta_{\ep,\Lambda})\big) \le 2 \varepsilon^{\gamma},\  \  \ \forall\, \ep\in(0, \ep_\Lambda). $$
\end{lemma}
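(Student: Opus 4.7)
The plan is to sharpen the Green's function estimate that drove the proofs of Lemmas~\ref{lm4-6}--\ref{lm4-8}, and then combine it with a disjointness argument to control the diameter. The outside bound \eqref{4-26} was obtained via the crude replacement $\sqrt{\det K(y)}^{-1}\le \sqrt{\det K((R,0))}^{-1}$ valid for every $y\in D$. Since $\operatorname{supp}(\zeta_{\ep,\Lambda})$ is already known, by Lemmas~\ref{lm4-6} and \ref{lm4-8}, to concentrate near $\partial B_{r_*}(0)$, we can restrict attention to $y\in\operatorname{supp}(\zeta_{\ep,\Lambda})$ and replace this coefficient by $(1+\delta)\sqrt{\det K((r_*,0))}^{-1}$ for any prescribed $\delta>0$ and all $\ep$ small enough. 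This small but crucial improvement widens the admissible exponent from a strict subinterval of $(0,1)$ (as in Lemma~\ref{lm4-8}) to all of $(0,1)$.

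Concretely, I would fix $\gamma\in(0,1)$, pick $\gamma'\in(\gamma,1)$ and $\delta>0$ with $(1+\delta)\gamma'<1$, and redo the derivation of \eqref{4-28} using the sharpened outside coefficient above. Combining the refined outside estimate with the unchanged inside estimate \eqref{4-27}, the lower bound on $\mu_{\ep,\Lambda}$ from Lemma~\ref{lm4-5}, and the limit $\int_D|x|^2\zeta_{\ep,\Lambda}\to\kappa r_*^2$ from Lemma~\ref{lm4-7}, then dividing by $\ln(1/\ep)$ and passing to the limit, should produce an inequality of the form
\[
\sqrt{\det K((r_*,0))}^{-1}\bigl[(1+\delta)\gamma'-1\bigr]\cdot M^{\mathrm{out}}_\ep(x_\ep,\gamma')\ge o(1),
\]
valid for every $x_\ep\in\operatorname{supp}(\zeta_{\ep,\Lambda})$, where $M^{\mathrm{out}}_\ep(x_\ep,\gamma'):=\int_{D\cap\{|T_{x_\ep}(y-x_\ep)|>\ep^{\gamma'}\}}\zeta_{\ep,\Lambda}\,\mathrm dy$. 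Since the bracket is a strictly negative constant by the choice of $\delta$, this forces $M^{\mathrm{out}}_\ep(x_\ep,\gamma')\to0$ as $\ep\to0$, uniformly in $x_\ep\in\operatorname{supp}(\zeta_{\ep,\Lambda})$.

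To conclude the diameter bound, I would use that $T_x$ has singular values $1$ and $\sqrt{(h^2+|x|^2)/h^2}\ge 1$ (a direct computation from $T_x^{t}T_x=K(x)^{-1}$ together with the explicit form of $K$), so $\{y:|T_{x_\ep}(y-x_\ep)|\le\ep^{\gamma'}\}\subseteq B_{\ep^{\gamma'}}(x_\ep)$ and hence the ball $B_{\ep^{\gamma'}}(x_\ep)$ carries mass $\kappa-o(1)$ for every $x_\ep\in\operatorname{supp}(\zeta_{\ep,\Lambda})$. If the conclusion of the lemma failed, there would exist $x_1^\ep,x_2^\ep\in\operatorname{supp}(\zeta_{\ep,\Lambda})$ with $|x_1^\ep-x_2^\ep|>2\ep^{\gamma}>2\ep^{\gamma'}$ along a sequence $\ep\to 0^+$; the two balls $B_{\ep^{\gamma'}}(x_i^\ep)$ would then be disjoint, and adding their masses would give a total greater than $\kappa$, in contradiction with $\int_D\zeta_{\ep,\Lambda}=\kappa$.

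The main obstacle will be the refinement of \eqref{4-26}: in its original form the factor $\sqrt{\det K((R,0))}^{-1}/\sqrt{\det K((r_*,0))}^{-1}>1$ entered the final inequality and forced $\gamma$ to lie in a strict subinterval of $(0,1)$, which sufficed only for the weaker assertion of Lemma~\ref{lm4-8}. Bootstrapping from the already established concentration of $\operatorname{supp}(\zeta_{\ep,\Lambda})$ near $\partial B_{r_*}(0)$ to reduce this ratio to $1+\delta$ with $\delta$ arbitrarily small is the technical step that unlocks the full range $\gamma\in(0,1)$ and, together with the disjoint-ball contradiction, delivers the diameter estimate.
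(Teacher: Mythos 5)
Your proof is correct, and it reaches the conclusion by a slightly different route than the paper. Both arguments hinge on the same two refinements of \eqref{4-26} --- tightening the coefficient in front of the outer integral and retaining the outer mass $M^{\mathrm{out}}_\ep$ instead of bounding it by $\kappa$ --- followed by the same disjoint-ball mass argument (your singular-value computation for $T_x$, giving $|T_x z|\ge |z|$, is exactly what the paper uses to pass from the $T_x$-ellipse to the Euclidean ball). The difference is in how the coefficient is tightened. The paper proceeds by a two-stage bootstrap: it first runs the crude estimate to obtain the lemma for one specific small exponent $\gamma_0$ (essentially $\gamma_0<\tfrac14\sqrt{\det K((R,0))}/\sqrt{\det K((r_*,0))}$), deduces $\mathrm{diam}(\mathrm{supp}(\zeta_{\ep,\Lambda}))\le C/\ln(1/\ep)$, and then uses this smallness of the support to replace $\sqrt{\det K(y)}^{-1}$ by $\sqrt{\det K(x)}^{-1}+o(1)$ for $y$ in the support, which yields the clean inequality $(1-\gamma)M^{\mathrm{in}}_\ep\ge \kappa(1-\gamma)-o(1)$ for every $\gamma\in(0,1)$. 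You instead invoke the radial concentration $|y|\to r_*$ from Lemmas \ref{lm4-6} and \ref{lm4-8} to replace the coefficient by $(1+\delta)\sqrt{\det K((r_*,0))}^{-1}$ directly, paying for the slack with the auxiliary exponent $\gamma'\in(\gamma,1)$ and the constraint $(1+\delta)\gamma'<1$; this skips the bootstrap entirely and gives $M^{\mathrm{out}}_\ep\to0$ uniformly on the support in one pass. Your version is marginally more economical in structure; the paper's version yields the sharper intermediate statement $M^{\mathrm{in}}_\ep\to\kappa$ without any $\delta$ bookkeeping. The uniformity you need (of the $o(1)$ terms over $x_\ep\in\mathrm{supp}(\zeta_{\ep,\Lambda})$) does hold, since $r_{\ep,\Lambda},R_{\ep,\Lambda}\to r_*$ controls $\sqrt{\det K(x)}$ and $|x|^2$ uniformly on the support and the remaining errors come from Lemmas \ref{lm4-5} and \ref{lm4-7}, which are independent of $x$.
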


\begin{proof}
	Since $\int_D\zeta_{\ep,\Lambda} dx=\kappa, $ it suffices to prove that
	\begin{equation}\label{4-33}
		\int\limits_{B_{\varepsilon^\gamma}(x )}\zeta_{\ep,\Lambda} dx>\kappa/2, \ \ \forall\, x  \in {supp}(\zeta_{\ep,\Lambda}).
	\end{equation}
	Indeed, for any $ x \in {supp}(\zeta_{\ep,\Lambda}) $,   we infer from $\eqref{4-29}$  that
	\begin{equation}\label{4-34}
		\begin{split}
			\int\limits_{D\cap\{|T_{x}(y-x)|\leq\varepsilon^\gamma\}}\zeta_{\ep,\Lambda} \mathrm d y
			\ge  &2\pi\sqrt{\det K(x)}\bigg[Y((r_*,0))+\frac{\alpha}{2\kappa}\left \{\int_D |x|^2 \zeta_{\ep,\Lambda} dx+\kappa|x|^2\right \}\\
			&-\frac{\sqrt{\det K((R,0))}^{-1}\kappa\gamma}{2\pi}-\frac{C(1+\ln\Lambda)}{\ln\frac{1}{\varepsilon}}\bigg].
		\end{split}
	\end{equation}	
	From Lemmas  \ref{lm4-6} and  \ref{lm4-8}, we know that $|x|\to r_*$  as $\varepsilon \to 0^+$. Taking this into \eqref{4-34}, we obtain
	\begin{equation}\label{216}
		\liminf_{\varepsilon\to 0^+}\int\limits_{\{|T_{x}(y-x)|\leq\varepsilon^\gamma\}}\zeta_{\ep,\Lambda} \mathrm d y \ge \kappa-\frac{\sqrt{\det K((r_*,0))} \kappa\gamma}{\sqrt{\det K((R,0))}}.
	\end{equation}
	From the definition of $ T_{x} $, one computes directly that
	\begin{equation*}
		\{y  \mid T_{x}(y-x)|\leq\varepsilon^\gamma\}\subset B_{\varepsilon^\gamma}(x).
	\end{equation*}
	Thus we get
	\begin{equation*}
		\liminf_{\varepsilon\to 0^+}\int\limits_{B_{\varepsilon^\gamma}(x)}\zeta_{\ep,\Lambda} \mathrm d y \ge \kappa-\frac{\sqrt{\det K((r_*,0))} \kappa\gamma}{\sqrt{\det K((R,0))}},
	\end{equation*}
	which implies \eqref{4-33} by taking $\gamma=\gamma_0:=\min\left\{1,  \frac{\sqrt{\det K((R,0))} }{4\sqrt{\det K((r_*,0))}  }\right\}>0 $. Thus we conclude that $ diam\big({supp}(\zeta_{\ep,\Lambda})\big) \le 2 \varepsilon^{\gamma_0} $, from which we deduce that
	\begin{equation}\label{4-36}
		diam\big({supp}(\zeta_{\ep,\Lambda})\big)\le \frac{C}{\ln\frac{1}{\varepsilon}}
	\end{equation}
	provided $\varepsilon$ is small enough.
	
	Now we go back to $\eqref{4-26}$ and improve estimates of $ \int\limits_{D\cap\{|T_{x_\varepsilon}(y-x)|>\varepsilon^\gamma\}}G_{K}(x,y)\zeta_{\ep,\Lambda}(y)\mathrm d y $. From \eqref{4-36}, we get for any $ \gamma\in(0,1) $
	\begin{equation*}
		\begin{split}
			&\int\limits_{D\cap\{|T_{x}(y-x)|>\varepsilon^\gamma\}}G_{K}(x,y)\zeta_{\ep,\Lambda}(y)\mathrm d y\\
			&\ \ \leq\int\limits_{D\cap\{|T_{x}(y-x)|>\varepsilon^\gamma\}}\frac{\sqrt{\det K(x)}^{-1}+\sqrt{\det K(y)}^{-1}}{2}\Gamma\left (\frac{T_{x}+T_y}{2}\left (x-y\right )\right )\zeta_{\ep,\Lambda}(y)\mathrm d y+C\\
			&\ \ \le	 \frac{\sqrt{\det K(x)}^{-1}\gamma}{2\pi}\ln\frac{1}{\varepsilon} \int\limits_{D\cap\{|T_{x}(y-x)|>\varepsilon^\gamma\}}\zeta_{\ep,\Lambda} \mathrm d y+C.
		\end{split}
	\end{equation*}
	Repeating the proof of \eqref{4-28}, we obtain a refined estimate of $\eqref{4-28}$ as follows
	\begin{equation*}
		\begin{split}
			& \frac{\sqrt{\det K(x)}^{-1}}{2\pi}\int\limits_{D\cap\{|T_{x}(y-x)|\leq\varepsilon^\gamma\}}\zeta_{\ep,\Lambda} \mathrm d y+\frac{\sqrt{\det K(x)}^{-1}\gamma}{2\pi} \int\limits_{D\cap\{|T_{x}(y-x)|>\varepsilon^\gamma\}}\zeta_{\ep,\Lambda} \mathrm d y\\
			&\ \ \ge   Y((r_*,0))+\frac{\alpha}{2\kappa}\left \{\int_D |x|^2 \zeta_{\ep,\Lambda} dx+\kappa|x|^2\right \}-\frac{C(1+\ln\Lambda)}{\ln\frac{1}{\varepsilon}},
		\end{split}
	\end{equation*}
	which implies that
	\begin{equation}\label{4-37}
		\begin{split}
			&(1-\gamma)\int\limits_{D\cap\{|T_{x}(y-x)|\leq\varepsilon^\gamma\}}\zeta_{\ep,\Lambda} \mathrm d y\\
			&\ \ \geq 2\pi\sqrt{\det K(x)}\left [ Y((r_*,0))+\frac{\alpha}{2\kappa}\left \{\int_D |x|^2 \zeta_{\ep,\Lambda} dx+\kappa|x|^2\right \}-\frac{C(1+\ln\Lambda)}{ \ln\frac{1}{\varepsilon}}\right ]-\kappa\gamma.
		\end{split}
	\end{equation}
	Taking the limit inferior to \eqref{4-37}, we get
	\begin{equation*}
		\liminf_{\varepsilon\to 0^+}\int\limits_{D\cap\{|T_{x}(y-x)|\leq\varepsilon^\gamma\}}\zeta_{\ep,\Lambda} \mathrm d y \ge \frac{1}{1-\gamma}\left [2\pi\sqrt{\det K((r_*,0))}(Y((r_*,0))+\alpha r_*^2)-\kappa\gamma\right ]=\kappa,
	\end{equation*}
	which implies \eqref{4-33} for all $ \gamma\in (0,1) $. The proof is thus complete.
\end{proof}

\subsection{Vanishment of the vortex patch term in profile \eqref{4-13}}
Having made the above preparations, we are able to show that the ``bad'' term of $ \zeta_{\ep, \Lambda} $, i.e., the last patch term in \eqref{4-13}, indeed vanishes when choosing the parameter  $ \Lambda $ properly large. This, together with \eqref{4-13}, will guarantee that the maximizers $ \zeta_{\ep,\Lambda} $ in Lemma \ref{lm4-2} is a solution of \eqref{3-10} with proper profile functions $ F_1 $ and $ F_2 $. The key observation is the following estimates of upper bounds of  $\psi_{\ep,\Lambda}$. 
\begin{lemma}\label{lm4-10}
For any $\Lambda>\max\{\alpha a+b,1+\pi R^2/\kappa\}$, let $\ep_\Lambda$ be the small positive constant in Lemma \ref{lm4-9}. Then there exists  an $\tilde \ep_\Lambda\in (0, \ep_\Lambda)$ such that  for any $ 0<\ep< \tilde\ep_\Lambda $,  
	\begin{equation*}
		||(\psi_{\ep,\Lambda})_+||_{L^\infty(D)} \le C (\ln\Lambda +1),
	\end{equation*}
	where $C$   is a positive constant  independent of $\ep$ and $\Lambda$.
\end{lemma}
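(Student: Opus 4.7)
The first observation is that it suffices to bound $\psi_{\ep,\Lambda}(x)$ for $x\in\text{supp}(\zeta_{\ep,\Lambda})$: wherever $\zeta_{\ep,\Lambda}(x)=0$, the third alternative in \eqref{4-15} already yields $\psi_{\ep,\Lambda}(x)\le 0$. Fix a preassigned exponent $\gamma\in(0,1)$; by Lemma \ref{lm4-9} the whole support is contained in $\overline{B_{2\ep^\gamma}(x)}$ for each of its points $x$ once $\ep<\ep_\Lambda$, and by Lemmas \ref{lm4-6} and \ref{lm4-8} such points satisfy $|x|\to r_*$.

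The first main step is to establish a uniform upper bound for $\mathcal{G}_K\zeta_{\ep,\Lambda}(x)$. Inserting the Green's function decomposition of Lemma \ref{lm4-1} and using the smoothness of $K$ and of the matrix field $T_\cdot$ to Taylor-expand about $x$, one obtains, for $y\in B_{2\ep^\gamma}(x)$,
\begin{equation*}
G_K(x,y)\le\sqrt{\det K(x)}^{-1}\Gamma(T_x(x-y))+C\ep^\gamma\bigl|\Gamma(T_x(x-y))\bigr|+C.
\end{equation*}
After the change of variables $u=T_x(y-x)$, the push-forward $\tilde\zeta(u)=\zeta_{\ep,\Lambda}(x+T_x^{-1}u)$ satisfies $\|\tilde\zeta\|_\infty\le\Lambda/\ep^2$ and has mass $\sqrt{\det K(x)}^{-1}\kappa$, so the Hardy--Littlewood rearrangement inequality against the radially decreasing weight $\Gamma$ is saturated by $\frac{\Lambda}{\ep^2}\textbf{1}_{B_s(0)}$ with $\pi s^2=\ep^2\sqrt{\det K(x)}^{-1}\kappa/\Lambda$. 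The elementary computation $\int_{B_s(0)}\Gamma(u)\,du=\frac{s^2}{2}\ln\frac{1}{s}+\frac{s^2}{4}$ combined with $\ln\frac{1}{s}=\ln\frac{1}{\ep}+\frac{1}{2}\ln\Lambda+O(1)$ then yields
\begin{equation*}
\mathcal{G}_K\zeta_{\ep,\Lambda}(x)\le\frac{\sqrt{\det K(x)}^{-1}\kappa}{2\pi}\ln\frac{1}{\ep}+C(\ln\Lambda+1),
\end{equation*}
provided $\tilde\ep_\Lambda$ is selected so that both $\ep^\gamma\ln\frac{1}{\ep}$ and $\ep^\gamma\ln\Lambda$ remain bounded; this absorbs the $O(\ep^\gamma)$ correction generated by the expansion.

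The second step combines this with a matching lower bound for $\mu_{\ep,\Lambda}$. Lemma \ref{lm4-5}, fed with $z=(r_*,0)$ in Lemma \ref{lm4-4} and with the trivial inequality $\int_D|x|^2\zeta_{\ep,\Lambda}\,dx\ge r_{\ep,\Lambda}^2\kappa$, gives $\mu_{\ep,\Lambda}\ge Y((r_*,0))\ln\frac{1}{\ep}+\frac{\alpha r_{\ep,\Lambda}^2}{2}\ln\frac{1}{\ep}-C$. Exploiting the algebraic identity $\frac{\sqrt{\det K(x)}^{-1}\kappa}{2\pi}=Y(x)+\alpha|x|^2$, which is immediate from \eqref{4-17} and the formula $\sqrt{\det K(x)}^{-1}=\sqrt{h^2+|x|^2}/h$, subtracting then yields, on the support,
\begin{equation*}
\psi_{\ep,\Lambda}(x)\le\bigl[Y(x)-Y((r_*,0))\bigr]\ln\frac{1}{\ep}+\frac{\alpha(|x|^2-r_{\ep,\Lambda}^2)}{2}\ln\frac{1}{\ep}+C(\ln\Lambda+1).
\end{equation*}
Lemma \ref{lm4-3} makes the first bracket nonpositive, while Lemma \ref{lm4-9} implies $R_{\ep,\Lambda}-r_{\ep,\Lambda}\le 2\ep^\gamma$, hence $|x|^2-r_{\ep,\Lambda}^2\le 4R\ep^\gamma$, so the second term is $O(\ep^\gamma\ln\frac{1}{\ep})=O(1)$ for $\ep$ small, completing the bound.

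The main obstacle is that the leading $\ln\frac{1}{\ep}$ coefficient in the upper bound for $\mathcal{G}_K\zeta_{\ep,\Lambda}(x)$ must be retained at the variable point $x$, rather than replaced by its value at the limit point $(r_*,0)$; otherwise an uncontrolled $o(1)\ln\frac{1}{\ep}$ term would appear after subtracting $\mu_{\ep,\Lambda}$ and destroy the required uniformity in $\Lambda$. This forces the careful tracking of the $O(\ep^\gamma)$ corrections in the Green's function expansion and the use of the maximality of $Y$ at $(r_*,0)$ to absorb the residual.
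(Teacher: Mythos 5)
Your proof is correct and follows essentially the same route as the paper: reduce to points of $\mathrm{supp}(\zeta_{\ep,\Lambda})$, bound $\mathcal{G}_K\zeta_{\ep,\Lambda}(x)$ by $\frac{\sqrt{\det K(x)}^{-1}\kappa}{2\pi}\ln\frac{1}{\ep}+C(1+\ln\Lambda)$ via the Green's function decomposition of Lemma \ref{lm4-1}, the diameter bound of Lemma \ref{lm4-9} and the rearrangement argument of \eqref{4-27}, then subtract the lower bound for $\mu_{\ep,\Lambda}$ from Lemma \ref{lm4-5} and use $Y(x)\le Y((r_*,0))$. Your explicit absorption of the residual $\tfrac{\alpha}{2}\bigl(|x|^2-r_{\ep,\Lambda}^2\bigr)\ln\frac{1}{\ep}=O(\ep^\gamma\ln\frac{1}{\ep})$ term merely spells out a step the paper leaves implicit.
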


\begin{proof}
If $ x\notin supp\,(\zeta_{\ep,\Lambda})$, then $ (\psi_{\ep,\Lambda})_+=0. $ For any $x\in supp\,(\zeta_{\ep,\Lambda})$, by Lemma \ref{lm4-9}  we find 
	$$ supp\,(\zeta_{\ep,\Lambda})\subset  B_{2\ep^\gamma}(x).$$
	Using similar calculations as the proof of \eqref{4-27},   we  are able to show
	\begin{equation*}
	\begin{split}
		&\ \ \ \int_{D}G_{K}(x,y)\zeta_{\ep,\Lambda}(y)\mathrm d y\\
		&\leq \left (\sqrt{\det K(x)}^{-1}+C\varepsilon^\gamma\right )\int\limits_{ B_{2\ep^\gamma}(x)}\Gamma\left (\frac{T_{x}+T_y}{2}\left (x-y\right )\right )\zeta_{\ep,\Lambda}(y)\mathrm d y+C\\
		& \leq \left (\sqrt{\det K(x)}^{-1}+C\varepsilon^\gamma\right )\int\limits_{B_{2\ep^\gamma}(x)}\Gamma\left (T_{x}\left (x-y\right )\right )\zeta_{\ep,\Lambda}(y)\mathrm d y+C\\
		&= \left (\sqrt{\det K(x)}^{-1}+C\varepsilon^\gamma\right )\int_{|T_{x}^{-1}z|\leq2\varepsilon^\gamma}\Gamma(z)\zeta_{\ep,\Lambda}(T_{x}^{-1}z+x)\sqrt{\det K(x)}dz+C\\
		&\leq \frac{\sqrt{\det K(x)}^{-1} \kappa}{2\pi}\ln\frac{1}{\ep}+C(1+\ln\Lambda),
	\end{split}
\end{equation*}
where $C$ is a constant independent of $\ep$ and $\Lambda$. Thus using the above estimate and  Lemmas \ref{lm4-3} and \ref{lm4-5}, we obtain
	\begin{equation*}
	\begin{split}
		&\ \ \  \psi_{\ep,\Lambda}(x) =\mathcal{G}_K\zeta_{\ep,\Lambda}(x)-\frac{\alpha |x|^2}{2}\ln{\frac{1}{\ep}}-\mu_{\ep,\Lambda} \\
		& \le \frac{\sqrt{\det K(x)}^{-1} \kappa}{2\pi}\ln\frac{1}{\ep}-\frac{\alpha |x|^2}{2}\log{\frac{1}{\ep}}-Y((r_*,0))\ln\frac{1}{\varepsilon}- \frac{\alpha}{2\kappa}\ln\frac{1}{\varepsilon}\int_D |y|^2 \zeta_{\ep,\Lambda} \mathrm d y+C(1+\ln\Lambda)\\
		& \le \left(Y(x)-Y((r_*,0))\right)\ln\frac{1}{\varepsilon}+ C(1+\ln\Lambda)\\
		&\le  C(1+\ln\Lambda),
	\end{split}
\end{equation*}
	where the positive number $C$ does not depend on $\ep$ and $\Lambda$. The proof is thus completed.
\end{proof}

Based on Lemma \ref{lm4-10}, we can finally get that the last term in \eqref{4-13}  indeed vanishes when choosing  $\Lambda$ large. 
\begin{lemma}\label{lm4-11}
	Let $\tilde \ep_\Lambda$ be the small positive constant in Lemma \ref{lm4-10}. Let $ r=|x| $. Then there exists a constant $\Lambda_0$ sufficiently large independent of $ \varepsilon $ such that  for any $ \Lambda\geq \Lambda_0 $ and $ 0<\ep<\tilde \ep_\Lambda, $
	\begin{equation*}
		\left|\left\{\zeta_{\ep,\Lambda}=\frac{\Lambda}{\ep^2}\right\}\right|=0.
	\end{equation*}
As a result, if we choose $\Lambda\geq \Lambda_0$  and $ 0<\ep<\tilde \ep_\Lambda$, then one has
	\begin{equation}\label{4-38} \begin{split}
		\zeta_{\ep, \Lambda}=\frac{1}{\ep^2}\frac{2h^2a\ep +a^2(h^2+r^2)}{(h^2+r^2)^2 }(\psi_{\ep, \Lambda})_+ +\frac{1}{\ep^2}\left(\frac{h^2\bar \alpha a\ep}{h^2+r^2}+b\right)\textbf{1}_{\{ \psi_{\ep, \Lambda}>0\} }	 \ \ a.e.\  \text{in}\  D.
	\end{split}
\end{equation}
\end{lemma}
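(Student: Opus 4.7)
The plan is to compare the threshold defining the vortex patch set in \eqref{4-13} (which grows linearly in $\Lambda$) against the upper bound on $(\psi_{\ep,\Lambda})_+$ from Lemma \ref{lm4-10} (which grows only logarithmically in $\Lambda$), and conclude that for $\Lambda$ large the patch set must be empty.

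More precisely, observe from the profile \eqref{4-13} that
\[
\left\{\zeta_{\ep,\Lambda}=\tfrac{\Lambda}{\ep^2}\right\}
=\left\{\psi_{\ep,\Lambda}\ge T_{\ep,\Lambda}(r)\right\},\quad
T_{\ep,\Lambda}(r):=\frac{(h^2+r^2)^2\bigl(\Lambda-\tfrac{h^2\bar\alpha a\ep}{h^2+r^2}-b\bigr)}{2h^2a\ep+a^2(h^2+r^2)}.
\]
First I would check that the denominator satisfies $2h^2a\ep+a^2(h^2+r^2)\le 2h^2a+a^2(h^2+R^2)$ for all $\ep\in(0,1)$ and $r\in[0,R]$, and that the numerator factor $(h^2+r^2)^2\ge h^4$. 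Since $\bar\alpha\ep=\alpha\ep|\ln\ep|\to 0$ as $\ep\to 0$ and $b$ is fixed, for $\Lambda\ge 2(b+1)$ and $\ep$ sufficiently small (depending only on $\Lambda$ through the requirement $\tfrac{h^2\bar\alpha a\ep}{h^2}+b\le\Lambda/2$) we have $\Lambda-\tfrac{h^2\bar\alpha a\ep}{h^2+r^2}-b\ge\Lambda/2$. Combining these bounds yields a constant $c_0>0$ independent of $\ep$ and $\Lambda$ such that
\[
T_{\ep,\Lambda}(r)\ge c_0\Lambda \qquad \text{for all }r\in[0,R].
\]

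On the other hand, Lemma \ref{lm4-10} gives $\|(\psi_{\ep,\Lambda})_+\|_{L^\infty(D)}\le C(1+\ln\Lambda)$ with $C$ independent of $\ep$ and $\Lambda$. Since $c_0\Lambda>C(1+\ln\Lambda)$ as soon as $\Lambda\ge\Lambda_0$ for some $\Lambda_0$ sufficiently large (independent of $\ep$), it follows that for every $\Lambda\ge\Lambda_0$ and every $\ep\in(0,\tilde\ep_\Lambda)$,
\[
\psi_{\ep,\Lambda}(x)\le C(1+\ln\Lambda)<c_0\Lambda\le T_{\ep,\Lambda}(|x|),\qquad x\in D,
\]
so the set $\{\psi_{\ep,\Lambda}\ge T_{\ep,\Lambda}\}$ is empty, and hence $|\{\zeta_{\ep,\Lambda}=\Lambda/\ep^2\}|=0$. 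Plugging this into \eqref{4-13} eliminates the second indicator term, and writing $\psi_{\ep,\Lambda}\mathbf{1}_{\{\psi_{\ep,\Lambda}>0\}}=(\psi_{\ep,\Lambda})_+$ inside the first term immediately yields the decomposition \eqref{4-38}.

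There is no serious obstacle here: the heart of the argument is the uniform-in-$\ep$ linear-versus-logarithmic gap between $T_{\ep,\Lambda}$ and $\|(\psi_{\ep,\Lambda})_+\|_\infty$, and the only care needed is tracking that the constants in both bounds are independent of $\ep$ and $\Lambda$ (so that $\Lambda_0$ can be chosen independent of $\ep$) and then fixing $\tilde\ep_\Lambda$ small enough, given $\Lambda\ge\Lambda_0$, so that the ``$\ep$-perturbation'' terms $\bar\alpha a\ep$ in the numerator and $2h^2a\ep$ in the denominator of $T_{\ep,\Lambda}$ are harmless.
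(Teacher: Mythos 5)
Your proof is correct and takes essentially the same approach as the paper: both compare the threshold for the patch set (which grows linearly in $\Lambda$) against the $L^\infty$ bound on $(\psi_{\ep,\Lambda})_+$ from Lemma~\ref{lm4-10} (which grows only logarithmically in $\Lambda$) and choose $\Lambda_0$ large enough to force a contradiction. One minor remark: the terms $\bar\alpha a\ep=\alpha a\,\ep|\ln\ep|$ and $2h^2a\ep$ are in fact bounded uniformly for all $\ep\in(0,1)$, so no additional smallness of $\ep$ is needed at the step where you bound $T_{\ep,\Lambda}(r)\ge c_0\Lambda$ — the only smallness requirement on $\ep$ is the one already inherited from Lemma~\ref{lm4-10}.
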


\begin{proof}
	In view of \eqref{4-15}, we find 
	\begin{equation*}
		(\psi_{\ep,\Lambda})_+\ge    \frac{(h^2+r^2)^2\left(  \Lambda -\frac{h^2\bar \alpha a\ep}{h^2+r^2}-b\right)_+}{2h^2a\ep+ a^2(h^2+r^2)} \ \ \ \ \text{on}\ \  \left \{\zeta_{\ep, \Lambda}=\frac{\Lambda}{\ep^2}\right \},
	\end{equation*}
	from which we conclude that
	\begin{equation}\label{4-39}
		(\psi_{_{\beta,\Lambda}})_+\ge C_1\Lambda-C_2 \ \ \ \ \text{on}\ \  \left \{\zeta_{\ep, \Lambda}=\frac{\Lambda}{\ep^2}\right \},
	\end{equation}
	for some positive numbers $C_1$, $C_2$ independent of $\ep$ and $\Lambda$.
	Combining \eqref{4-39} with Lemma \ref{lm4-10}, we deduce that on the set $\left \{\zeta_{\ep, \Lambda}=\frac{\Lambda}{\ep^2}\right \}$ 
	\begin{equation*}
		C\ln\Lambda\ge C_1\Lambda-C_3,
	\end{equation*}
	which is impossible when $\Lambda$ is chosen to be sufficiently large, i.e., $\Lambda \ge \Lambda_0$ for a constant $\Lambda_0$ independent of $\ep$. Hence, we have  
	\begin{equation}\label{4-39'}
		\left|\left\{\zeta_{\ep,\Lambda}=\frac{\Lambda}{\ep^2}\right\}\right|=0, \ \  \  \forall\ \ 0<\ep<\tilde \ep_\Lambda,\ \ \Lambda\geq \Lambda_0.
	\end{equation} 
\eqref{4-38} follows immediately from \eqref{4-39'} and  Lemma \ref{lm4-2}.
 The proof is therefore completed.
\end{proof}

For the rest of this section, we will choose $ \Lambda = \Lambda_0 $ in Lemma \ref{lm4-11} and abbreviate $(\zeta_{\ep,\Lambda_0},\psi_{\ep,\Lambda_0}, \mu_{\ep,\Lambda_0})$ to $(\zeta_\ep, \psi_\ep, \mu_\ep)$. So maximizers $ \zeta_\ep $ in Lemma \ref{lm4-2} have the profile \eqref{4-5} and thus satisfy \eqref{3-10} with the profile functions 
\begin{equation*} 
F_1(s)=\ep^{-1}a (s-\mu_\ep)_+,\ \ \   F_2(s)=\ep^{-2}(b+\bar\alpha a \ep) \textbf{1}_{\{s-\mu_\ep>0\}},\ \ \  s\in \mathbb R.
\end{equation*}
According to the previous derivation in sections 2 and 3, $ \zeta_\ep $ corresponds to a family of  concentrated helical symmetric vorticity fields of \eqref{1-2} without the assumption \eqref{1-7}.
\subsection{Refined asymptotic estimates of $ \zeta_\varepsilon $}
We can further improve results of Lemma \ref{lm4-9} that the diameter of the support set of $ \zeta_\varepsilon $ is the order  of $ \varepsilon $ and analyze the properties of limiting function  of  $ \zeta_\varepsilon $. As a result, we can deduce that the support set $ \text{supp}(\zeta_\varepsilon) $ tends asymptotically to an ellipse.
\begin{lemma}\label{lm4-12}
	There exist  $r_1, r_2>0$ independent of $\varepsilon$ such that for $ \varepsilon $ sufficiently  small,
	\begin{equation*}
		r_1 \varepsilon\leq diam(\text{supp}(\zeta_\varepsilon)) \le r_2 \varepsilon.
	\end{equation*}
\end{lemma}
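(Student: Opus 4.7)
\textbf{Plan for the proof of Lemma \ref{lm4-12}.} The lower bound is immediate from a mass argument: by Lemma \ref{lm4-11} (taking $\Lambda=\Lambda_0$) we have $\zeta_\ep\le \Lambda_0/\ep^2$ a.e., so the mass constraint $\int_D\zeta_\ep\,\mathrm dx=\kappa$ gives $|\mathrm{supp}(\zeta_\ep)|\ge \kappa\ep^2/\Lambda_0$. Since any planar set of diameter $d$ has area at most $\pi d^2/4$ (isodiametric inequality), we conclude $d\ge 2\sqrt{\kappa/(\pi\Lambda_0)}\,\ep=:r_1\ep$.

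For the upper bound I would use a blow-up argument. Fix any $x_\ep\in\mathrm{supp}(\zeta_\ep)$ and rescale
\[
\tilde\psi_\ep(y):=\psi_\ep(x_\ep+\ep y),\qquad \tilde\zeta_\ep(y):=\ep^2\zeta_\ep(x_\ep+\ep y).
\]
From $\mathcal{L}_K\psi_\ep=\zeta_\ep-(\bar\alpha/2)\mathcal{L}_K|x|^2$ and the chain rule,
\[
-\mathrm{div}_y\bigl[K(x_\ep+\ep y)\nabla_y\tilde\psi_\ep\bigr]=\tilde\zeta_\ep+O\bigl(\ep^2|\ln\ep|\bigr).
\]
By Lemma \ref{lm4-10} we have $\|\tilde\psi_\ep\|_\infty\le C$, by Lemma \ref{lm4-11} also $\|\tilde\zeta_\ep\|_\infty\le \Lambda_0$, and $\int\tilde\zeta_\ep\,\mathrm dy=\kappa$. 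Using Lemmas \ref{lm4-6}--\ref{lm4-8} (up to a subsequence $x_\ep\to x_*$ with $|x_*|=r_*$) together with $\bar\alpha\ep=\alpha\ep|\ln\ep|\to 0$, the profile of $\tilde\zeta_\ep$ converges pointwise to $\frac{a^2}{h^2+r_*^2}\tilde\psi_+ + b\mathbf{1}_{\{\tilde\psi>0\}}$. Standard elliptic regularity then yields $\tilde\psi_\ep\to\tilde\psi$ in $C^{1,\beta}_{\mathrm{loc}}(\mathbb R^2)$ with $\tilde\psi$ bounded and solving
\[
-\mathrm{div}\bigl[K(x_*)\nabla\tilde\psi\bigr]=\tfrac{a^2}{h^2+r_*^2}\tilde\psi_++b\mathbf{1}_{\{\tilde\psi>0\}}\quad\text{in }\mathbb R^2.
\]

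The main obstacle is extracting the diameter bound from this limit problem. Diagonalising $K(x_*)$ by a linear change of variables converts the limit equation into a semilinear Poisson equation $-\Delta\hat\psi=\tfrac{a^2}{h^2+r_*^2}\hat\psi_++b\mathbf{1}_{\{\hat\psi>0\}}$ in $\mathbb R^2$, whose right-hand side is nonnegative with total mass at most $\kappa$; thus $\hat\psi$ is a bounded superharmonic function on $\mathbb R^2$ and the Riesz representation $\hat\psi(y)=C-\tfrac{1}{2\pi}\int\log|y-z|(-\Delta\hat\psi)(z)\,\mathrm dz$ forces $\hat\psi(y)\to-\infty$ like $-\tfrac{\kappa}{2\pi}\log|y|$, so $\{\hat\psi>0\}$ lies in a fixed bounded set $B_{R_*}$. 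I would then argue by contradiction: if $\mathrm{diam}(\mathrm{supp}(\zeta_\ep))/\ep$ were unbounded along a subsequence, one could pick $y_\ep\in\mathrm{supp}(\zeta_\ep)$ with $|y_\ep-x_\ep|/\ep\to\infty$; the uniform bounds and $C^{1,\beta}_{\mathrm{loc}}$ convergence, combined with the compactness of $\{\hat\psi>0\}$ and the lower bound $\tilde\zeta_\ep\ge b\mathbf{1}_{\{\tilde\psi_\ep>0\}}$ (when $b>0$; when $b=0$, use instead the term $\tfrac{a^2}{h^2+r_*^2}\tilde\psi_{\ep,+}$ to get a nondegenerate positive source on connected components of sufficiently large mass), would force escape of a definite positive mass to infinity, contradicting $\int\tilde\zeta_\ep\,\mathrm dy=\kappa$. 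The delicate point will be to upgrade the purely local blow-up convergence to a \emph{uniform} statement on the whole of $\mathrm{supp}(\zeta_\ep)$, for which Lemma \ref{lm4-9}, guaranteeing that the support already lies in a ball of radius $2\ep^\gamma$, provides the essential starting cocoon.
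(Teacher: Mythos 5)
Your lower bound is correct and is exactly the paper's argument (mass constraint plus the $L^\infty$ bound $\zeta_\ep\le\Lambda_0/\ep^2$). The upper bound, however, has a genuine gap at the final contradiction step. Local $C^{1,\beta}_{\mathrm{loc}}$ convergence of the blow-up centered at $x_\ep$ controls $\tilde\zeta_\ep$ only on compact sets of the rescaled variable; it says nothing about a point $y_\ep\in\mathrm{supp}(\zeta_\ep)$ with $|y_\ep-x_\ep|/\ep\to\infty$. In particular, nothing in your argument excludes the scenario in which $\mathrm{supp}(\zeta_\ep)$ consists of two (or more) clumps separated by distances $\gg\ep$ but $\ll\ep^\gamma$, each carrying a fixed fraction of the mass $\kappa$. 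In that scenario no mass ``escapes'': in the frame centered at $x_\ep$ the second clump simply sits at spatial infinity, the blow-up limit $\hat\psi$ has compact support and carries mass $m_1<\kappa$, and $\int\tilde\zeta_\ep\,\mathrm dy=\kappa$ is perfectly consistent with this. To close the argument you must show that the blow-up limit captures the \emph{full} mass $\kappa$, and that requires an extra input beyond compactness of $\{\hat\psi>0\}$ — typically the variational information encoded in the lower bound on $\mu_\ep$ (Lemma \ref{lm4-5}). A secondary issue: Lemma \ref{lm4-10} bounds only $(\psi_\ep)_+$, not $\psi_\ep$ itself, so the asserted bound $\|\tilde\psi_\ep\|_\infty\le C$ is not available; a local lower bound must be extracted separately (e.g.\ via Harnack, using $\tilde\psi_\ep(0)=\psi_\ep(x_\ep)\ge0$ and the bounded right-hand side), before any $C^{1,\beta}_{\mathrm{loc}}$ compactness can be invoked.

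For comparison, the paper avoids the blow-up entirely and works directly with the potential inequality $\mathcal{G}_K\zeta_\ep(x)-\frac{\bar\alpha|x|^2}{2}\ge\mu_\ep$ valid for every $x\in\mathrm{supp}(\zeta_\ep)$. Splitting $\mathcal{G}_K\zeta_\ep(x)$ into the contributions from $\{|T_x(y-x)|\le R_*\ep\}$ and $\{|T_x(y-x)|>R_*\ep\}$, and using the containment $\mathrm{supp}(\zeta_\ep)\subset B_{2\ep^{1/2}}(x)$ from Lemma \ref{lm4-9} to bound the far-field logarithm by $\ln\frac{1}{R_*\ep}$, one combines the result with the lower bound on $\mu_\ep$ from Lemma \ref{lm4-5} to obtain
\begin{equation*}
\int_{\{|T_x(y-x)|\le R_*\ep\}}\zeta_\ep\,\mathrm dy\ \ge\ \kappa\Bigl(1-\frac{C}{\ln R_*}\Bigr)>\frac{\kappa}{2}
\end{equation*}
for $R_*$ fixed large. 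Since more than half the mass lies within $O(R_*\ep)$ of \emph{every} point of the support, the diameter is at most $2R_*\ep$. This is precisely the quantitative ``no splitting at scale $\ep$'' statement that your blow-up argument is missing; I would recommend either adopting it or importing the $\mu_\ep$ inequality at the far point $y_\ep$ to rule out the multi-clump scenario.
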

\begin{proof}
	Since $0\leq \zeta_\varepsilon\leq \frac{\Lambda_0}{\ep^2}$, we get 
	$$\kappa=\int_D\zeta_\varepsilon \mathrm d x\leq \frac{\pi\Lambda_0}{\ep^2}  (diam(\text{supp}(\zeta_\varepsilon)))^2,$$
	which shows   $ diam(  {supp}(\zeta_\varepsilon))\geq r_1 \varepsilon$ for some $ r_1>0 $. It suffices to prove $ diam({supp}(\zeta_\varepsilon)) \le r_2 \varepsilon $ for some $r_2>0$.
	
We observe that for any $  x  \in  {supp}(\zeta_\varepsilon)  $,
	\begin{equation}\label{4-40}
		\mathcal{G}_{K}\zeta_\varepsilon(x)-\frac{\alpha|x|^2}{2}\ln{\frac{1}{\varepsilon}}\ge \mu_\ep.
	\end{equation}
	According to Lemma $\ref{lm4-9}$, there holds
	\begin{equation}\label{4-41}
		supp(\zeta_\varepsilon)\subseteq B_{2\varepsilon^{\frac{1}{2}}} ( x ).
	\end{equation}
	Let $R_*>1$ to be determined later. On the one hand, we have
	\begin{equation*}
		\begin{split}
			\mathcal{G}_{K}\zeta_\varepsilon(x)
			&=\left (\int_{D\cap\{|T_{x}(y-x)|> R_*\varepsilon\}}+\int_{D\cap\{|T_{x}(y-x)|\leq R_*\varepsilon\}}\right )G_{K}(x,y)\zeta_\varepsilon(y)\mathrm d y\\
			&=:B_1+B_2.
		\end{split}
	\end{equation*}
	By \eqref{4-41}, we get
	\begin{equation}\label{4-42}
		\begin{split}
			B_1 &=\int_{D\cap\{|T_{x}(y-x)|> R_*\varepsilon\}}G_{K}(x,y)\zeta_\varepsilon(y)\mathrm d y\\
			&\le \frac{\sqrt{\det K(x)}^{-1}+O\left (\varepsilon^{\frac{1}{2}}\right )}{2\pi}\ln{\frac{1}{R_*\varepsilon}}\int\limits_{D\cap\{|T_{x}(y-x)|> R_*\varepsilon\}}\zeta_\varepsilon \mathrm d y+C\\
			&\le \frac{\sqrt{\det K(x)}^{-1}}{2\pi}\ln{\frac{1}{R_*\varepsilon}}\int\limits_{D\cap\{|T_{x}(y-x)|> R_*\varepsilon\}}\zeta_\varepsilon \mathrm d y+C,
		\end{split}
	\end{equation}
	and
	\begin{equation}\label{4-43}
		\begin{split}
			B_2 &=\int_{D\cap\{|T_{x}(y-x)|\leq R_*\varepsilon\}}G_{K}(x,y)\zeta_\varepsilon(y)\mathrm d y\\
			&\le \frac{\sqrt{\det K(x)}^{-1}+O\left (\varepsilon^\frac{1}{2}\right )}{2\pi}\int\limits_{D\cap\{|T_{x}(y-x)|\leq R_*\varepsilon\}}\left (\ln\frac{1}{|T_{x}(y-x)|}+O\left (\varepsilon^{\frac{1}{2}}\right )\right )\zeta_\varepsilon(y)\mathrm d y+C\\
			&\le \frac{\sqrt{\det K(x)}^{-1}+O\left (\varepsilon^\frac{1}{2}\right )}{2\pi}\int\limits_{ |z|\leq R_*\varepsilon }\left (\ln\frac{1}{|z|}+O\left (\varepsilon^{\frac{1}{2}}\right )\right )\zeta_\varepsilon\left (T_{x}^{-1}z+x\right )\sqrt{\det K(x)}dz +C\\
			&\le \frac{\sqrt{\det K(x)}^{-1}+O\left (\varepsilon^\frac{1}{2}\right )}{2\pi}\ln\frac{1}{\varepsilon}\int\limits_{ |z|\leq R_*\varepsilon }\zeta_\varepsilon\left (T_{x}^{-1}z+x\right )\sqrt{\det K(x)}dz +C\\
			& \le \frac{\sqrt{\det K(x)}^{-1}}{2\pi}\ln\frac{1}{\varepsilon}\int\limits_{D\cap\{|T_{x}(y-x)|\leq R_*\varepsilon\}}\zeta_\varepsilon \mathrm d y+C.
		\end{split}
	\end{equation}
	Taking \eqref{4-42} and \eqref{4-43} into \eqref{4-40}, we get
	\begin{equation}\label{4-44}
		\begin{split}
			&\frac{\sqrt{\det K(x)}^{-1}}{2\pi}\ln{\frac{1}{R_*\varepsilon}}\int\limits_{\{|T_{x}(y-x)|> R_*\varepsilon\}}\zeta_\varepsilon \mathrm d y+\frac{\sqrt{\det K(x)}^{-1}}{2\pi}\ln\frac{1}{\varepsilon}\int\limits_{\{|T_{x}(y-x)|\leq R_*\varepsilon\}}\zeta_\varepsilon \mathrm d y\\
			&\ \ \ge \frac{\alpha|x|^2}{2}\ln{\frac{1}{\varepsilon}}+\mu_\ep-C .
		\end{split}
	\end{equation}
	On the other hand, by Lemma $\ref{lm4-5}$, one has
	\begin{equation}\label{4-45}
		\mu_\varepsilon\ge\left(\frac{\kappa}{2\pi\sqrt{\det K(x)}}-\frac{  \alpha |x|^2}{2}\right)\ln{\frac{1}{\varepsilon}}-C.
	\end{equation}
	Combining $\eqref{4-44}$ and $\eqref{4-45}$, we obtain
	\begin{equation*}
		\frac{\kappa}{2\pi}\ln\frac{1}{\varepsilon} \le \frac{1}{2\pi}\ln{\frac{1}{R_*\varepsilon}}\int\limits_{\{|T_{x}(y-x)|> R_*\varepsilon\}}\zeta_\varepsilon \mathrm d y+\frac{1}{2\pi}\ln\frac{1}{\varepsilon}\int\limits_{\{|T_{x}(y-x)|\leq R_*\varepsilon\}}\zeta_\varepsilon \mathrm d y+C,
	\end{equation*}
	which implies that
	\begin{equation*}
		\int\limits_{\{|T_{x}(y-x)|\leq R_*\varepsilon\}}\zeta_\varepsilon \mathrm d y\ge \kappa\left (1-\frac{C}{\ln R_*}\right ).
	\end{equation*}
	Taking $R_*>1$ such that $C(\ln R_*)^{-1}<1/2$, we obtain
	\begin{equation*}
		\int\limits_{D\cap{B_{R_*\varepsilon}(x)}}\zeta_\varepsilon \mathrm d y\ge\int\limits_{\{|T_{x}(y-x)|\leq R_*\varepsilon\}}\zeta_\varepsilon \mathrm d y> \frac{\kappa}{2}.
	\end{equation*}
	Taking $r_2=2R_*$, we complete the proof of Lemma \ref{lm4-12}.	
\end{proof}
Lemma \ref{4-12} implies that diameter of the support set of $ \zeta_\varepsilon $ is the order  of $ \varepsilon $. By using the scaling method and theory of rearrangement function, we can further analyse the asymptotic properties of the scaled function  of  $ \zeta_\varepsilon $.

We define the center of $  \zeta_\varepsilon $
\begin{equation}\label{224}
X_\varepsilon=\frac{1}{\kappa}\int_D x\zeta_\varepsilon(x) dx.
\end{equation}
It follows from Lemmas \ref{lm4-6}, \ref{lm4-8} and \ref{lm4-9} that $X_\varepsilon \to (r_*,0)$ as $\varepsilon \to 0^+$. Let $g_{\varepsilon}$ 
be the scaled function of $ \zeta_{\varepsilon} $ defined  by
\begin{equation}\label{225}
g_{\varepsilon}(x)={\varepsilon^2}\zeta_{\varepsilon}(X_\varepsilon+T^{-1}_{X_\varepsilon}\varepsilon x),\ \ \ \ \ x\in D_\ep,
\end{equation}
where $ T_{X_\varepsilon} $ is the positive-definite matrix such that \eqref{matrix T} holds, i.e.,
\begin{equation*} 
T_{X_\varepsilon}^{-1}\left (T_{X_\varepsilon}^{-1}\right )^{t}=K\left (X_\varepsilon\right ),
\end{equation*} 
and $ D_\ep=\left \{x\in\mathbb{R}^2\mid X_\varepsilon+T^{-1}_{X_\varepsilon}\varepsilon x\in D\right \} $. By Lemmas \ref{lm4-2}, \ref{lm4-11} and \ref{lm4-12}, $ \int_{D}g_{\varepsilon}(x)dx =\sqrt{\det K(X_\varepsilon)}^{-1}\kappa$,  $ supp(g_{\varepsilon})\subseteq B_{R_1}(0) $ and $ ||g_{\varepsilon}||_{L^\infty}\leq \Lambda_0 $ for some constants $R_1, \Lambda_0>0$.  
Then up to a subsequence, there must be a function  $ g  \in L^\infty(B_{R_1}(0))   $ such that as $\varepsilon \to 0^+$ 
$$ g_{\varepsilon} \to g \ \ \text{ weakly ~in}\ \ L^2(B_{R_1}(0)).$$ 
 
The following lemma gives the asymptotic behavior of  $g_{\varepsilon}$ as $\varepsilon\to0$.

\begin{lemma}\label{le12}
$ g $ is a  radially symmetric nonincreasing function, i.e., $ g(x)=g(|x|) $ for any $ x\in\mathbb{R}^2 $.
\end{lemma}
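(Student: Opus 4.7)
My plan is to combine the maximality of $\zeta_\varepsilon$ with the Riesz rearrangement inequality applied in the affinely rescaled coordinates centered at $X_\varepsilon$. The construction proceeds via a symmetrized competitor: if $g_\varepsilon^{\ast}$ denotes the symmetric decreasing rearrangement of $g_\varepsilon$ about the origin, set
\[
\tilde\zeta_\varepsilon(y) := \varepsilon^{-2}\, g_\varepsilon^{\ast}\bigl(T_{X_\varepsilon}(y-X_\varepsilon)/\varepsilon\bigr).
\]
Since rearrangement preserves the distribution function and the affine map has constant Jacobian, $\tilde\zeta_\varepsilon \in \mathcal{M}_{\kappa,\Lambda_0}$; Lemma \ref{lm4-2} then gives $E_\varepsilon(\zeta_\varepsilon) \ge E_\varepsilon(\tilde\zeta_\varepsilon)$.

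Next I would carry out an asymptotic expansion of the energy in the scaled variables using the decomposition \eqref{4-3} of $G_K$. The $O(|\ln\varepsilon|)$ and the rearrangement-invariant $O(1)$ contributions coincide for $\zeta_\varepsilon$ and $\tilde\zeta_\varepsilon$: indeed, the leading singular part of $G_K$ contributes $\tfrac{\kappa^2}{4\pi}\sqrt{\det K(X_\varepsilon)}^{-1}|\ln\varepsilon|$; the rotation term yields $-\tfrac{\alpha \kappa |X_\varepsilon|^2}{2}|\ln\varepsilon|$ (the first-moment correction in $\varepsilon$ vanishes because $\int \tilde x\, g_\varepsilon\,d\tilde x = 0$ by definition of $X_\varepsilon$, and higher moments give $O(\varepsilon^2|\ln\varepsilon|)=o(1)$); the $H_0$ piece converges to $\tfrac{\kappa^2}{2}H_0(X_\varepsilon,X_\varepsilon)$; and the $J_\varepsilon$ integral is equimeasurable-invariant at leading order. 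The only rearrangement-sensitive $O(1)$ piece is the Riesz-type interaction
\[
\mathcal{F}(g) := -\frac{\sqrt{\det K((r_*,0))}}{4\pi}\iint \ln|\tilde x - \tilde y|\,g(\tilde x)\,g(\tilde y)\,d\tilde x\,d\tilde y,
\]
so $E_\varepsilon(\tilde\zeta_\varepsilon) - E_\varepsilon(\zeta_\varepsilon) = \mathcal{F}(g_\varepsilon^{\ast}) - \mathcal{F}(g_\varepsilon) + o(1)$.

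The Riesz rearrangement inequality for the symmetric decreasing kernel $-\ln|\cdot|$ gives $\mathcal{F}(g_\varepsilon^{\ast}) \ge \mathcal{F}(g_\varepsilon)$, and combined with the maximality of $\zeta_\varepsilon$ this forces $\mathcal{F}(g_\varepsilon^{\ast}) - \mathcal{F}(g_\varepsilon) \to 0$. Since $g_\varepsilon$ and $g_\varepsilon^{\ast}$ are uniformly bounded in $L^\infty$ with uniformly bounded supports (Lemmas \ref{lm4-9} and \ref{lm4-12}), along a subsequence $g_\varepsilon^{\ast} \to g^{\ast}$ weakly in $L^2$, with $g^{\ast}$ identified as the symmetric decreasing rearrangement of the weak limit $g$. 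The logarithmic interaction $\mathcal{F}$ is continuous on this uniformly bounded, uniformly compactly supported family, so passing to the limit yields $\mathcal{F}(g^{\ast}) = \mathcal{F}(g)$. Rigidity in Riesz's inequality then forces $g$ to be a translate of $g^{\ast}$, and the vanishing of the first moment $\int \tilde x\, g\, d\tilde x = 0$ eliminates the translation, giving $g = g^{\ast}$.

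The main obstacle I anticipate lies in rigorously justifying the last two steps. First, the rigidity in Riesz's inequality for the unbounded kernel $-\ln|\cdot|$ requires either invoking Burchard's rigidity theorem after truncating the kernel or a direct argument exploiting the strict monotonicity of $-\ln|\cdot|$ on the relevant bounded set. Second, confirming that symmetric rearrangement commutes with the weak limit $g_\varepsilon \rightharpoonup g$ on uniformly bounded, uniformly compactly supported sequences, while standard, deserves careful verification via the continuity of rearrangement in $L^1$ together with the uniform mass and support bounds.
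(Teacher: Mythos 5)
Your proposal follows the paper's proof almost step for step: the symmetrized competitor in rescaled coordinates, the Riesz rearrangement inequality, the energy expansion via the Green's function decomposition \eqref{4-3} to isolate the $O(1)$ rearrangement-sensitive logarithmic interaction, and the combination of Riesz rigidity with the vanishing first moment. The paper cites Lemma 3.2 of Burchard--Guo \cite{BG} for the rigidity step, which is the same result you invoke.

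One step is misstated and would fail as written: you claim that the weak $L^2$ limit of $g_\varepsilon^{\ast}$ is $g^{\ast}$, the symmetric decreasing rearrangement of the weak limit $g$, and suggest this can be verified through $L^1$-continuity of rearrangement. Rearrangement does not commute with weak convergence (an oscillating sequence can converge weakly to a constant while its rearrangements converge to something with a completely different distribution function), and weak $L^2$ convergence on a fixed ball does not upgrade to strong $L^1$ convergence, so the $L^1$-contractivity of $f\mapsto f^{\ast}$ is not available. The paper avoids this issue: it simply names the weak limit of the rearrangements $h$, uses only that $h$ is radially symmetric and nonincreasing (a convex, norm-closed, hence weakly closed property when all functions are supported in a common ball), and applies the Burchard--Guo rigidity lemma to the pair $(g,h)$ directly. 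The identification $h=g^{\ast}$ emerges a posteriori, once one knows $g$ is a translate of $h$; it cannot be assumed at the outset. With this repair your argument coincides with the paper's.
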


\begin{proof}
We denote $g^\bigtriangleup_{\varepsilon}$ the  radially symmetric nonincreasing Lebesgue-rearrangement of $g_\varepsilon$ centered on 0, i.e., $g^\bigtriangleup_{\varepsilon}$ is  radially symmetric nonincreasing and for any $ \mu>0 $
\begin{equation}\label{rearr ineq} 
\left |\left\lbrace g^\bigtriangleup_{\varepsilon}(x)>\mu\right\rbrace \right |=\left |\left\lbrace g_{\varepsilon}(x)>\mu\right\rbrace \right |.
\end{equation}
Then $ \int_{D}g^\bigtriangleup_{\varepsilon}(x)dx=\int_{D}g_{\varepsilon}(x)dx =\sqrt{\det K(X_\varepsilon)}^{-1}\kappa$, $ ||g^\bigtriangleup_{\varepsilon}||_{L^\infty}\leq \Lambda_0 $ and there must be a function  $ h  \in L^\infty(B_{R_1}(0))   $ such that as $\varepsilon \to 0^+$ 
$$ g^\bigtriangleup_\varepsilon \to h \ \ \text{ weakly ~in}\ \ L^2(B_{R_1}(0)).$$ 
Moreover, $ h $ is a  radially symmetric nonincreasing function.

One the one hand, by virtue of Riesz' rearrangement inequality, we have
	\begin{equation*}
	\iint\limits_{B_{R_1}(0)\times B_{R_1}(0)}\ln\frac{1}{|x-x'|}g_{\varepsilon}(x)g_{\varepsilon}(x')dxdx' 
	\le \iint\limits_{B_{R_1}(0)\times B_{R_1}(0)}\ln\frac{1}{|x-x'|}g^\bigtriangleup_{\varepsilon}(x)g^\bigtriangleup_{\varepsilon}(x')dxdx'.
	\end{equation*}
	Hence
	\begin{equation}\label{226}
	\iint\limits_{B_{R_1}(0)\times B_{R_1}(0)}\ln\frac{1}{|x-x'|}g(x)g(x')dxdx'\le \iint\limits_{B_{R_1}(0)\times B_{R_1}(0)}\ln\frac{1}{|x-x'|}h(x)h(x')dxdx'.
	\end{equation}
On the other hand, we define test functions $\tilde{\zeta}_\varepsilon$  
	\[
	\tilde{\zeta}_\varepsilon(x)=\left\{
	\begin{array}{lll}
	\varepsilon^{-2}g^\bigtriangleup_{\varepsilon}\left (\varepsilon^{-1}T_{X_\varepsilon}(x-X_\varepsilon)\right ) &    \text{if} & x\in B_{R_1\varepsilon}(X_\varepsilon), \\
	0                  &    \text{if} & x\in D\backslash B_{R_1\varepsilon}(X_\varepsilon).
	\end{array}
	\right.
	\]
Then $ \int_{D}\tilde{\zeta}_\varepsilon(x)dx=\kappa $ and thus $ \tilde{\zeta}_\varepsilon\in \mathcal{M}_{\ep} $. This implies $ 	E_\varepsilon(\zeta_{\varepsilon})\geq 	E_\varepsilon(\tilde{\zeta}_{\varepsilon}) $.

From \eqref{4-8}, \eqref{4-10} and Lemma \ref{lm4-1}, we have
	\begin{equation}\label{227}
\begin{split}
E_\varepsilon(\zeta_{\varepsilon}) =&\frac{1}{4\pi}\iint\limits_{B_{R_1\varepsilon} \times B_{R_1\varepsilon} }\left( \frac{\sqrt{\det K(x)}^{-1}+\sqrt{\det K(y)}^{-1}}{2}\ln\frac{1}{\left |\frac{T_x+T_y}{2}\left( x-y\right) \right |}+H_0(x,y)\right)\zeta_{\varepsilon}(x)\zeta_{\varepsilon}(y)dxdy \\
&-\frac{\bar{\alpha}}{2}\int_D|x|^2\zeta_{\varepsilon}(x)dx-\frac{1}{\varepsilon^2}\int_{D}J(|x|, \varepsilon^2\zeta_{\varepsilon}(x))dx.
\end{split}
\end{equation}
Using transformation of coordinates $ x=X_\varepsilon+T^{-1}_{X_\varepsilon}\varepsilon x', y=X_\varepsilon+T^{-1}_{X_\varepsilon}\varepsilon y'  $ and Lemma \ref{lm4-12}, one computes directly that
\begin{equation*}
\begin{split}
\frac{1}{4\pi}&\iint\limits_{B_{R_1\varepsilon} \times B_{R_1\varepsilon} }\left( \frac{\sqrt{\det K(x)}^{-1}+\sqrt{\det K(y)}^{-1}}{2}\ln\frac{1}{\left |\frac{T_x+T_y}{2}\left( x-y\right) \right |}+H_0(x,y)\right)\zeta_{\varepsilon}(x)\zeta_{\varepsilon}(y)dxdy\\
=&\frac{\left(\det T_{X_\varepsilon}^{-1} \right)^2}{4\pi}\iint\limits_{B_{R_1}(0)\times B_{R_1}(0)}\left(\frac{\sqrt{\det K(x)}^{-1}+\sqrt{\det K(y)}^{-1}}{2}\ln\frac{1}{\varepsilon\left |\frac{T_x+T_y}{2}T_{X_\varepsilon}^{-1}\left( x'-y'\right) \right |}+H_0(x,y)\right)\\
&\cdot g_{\varepsilon}(x')g_{\varepsilon}(y') dx'dy' \\
=&\frac{\sqrt{\det K(X_\varepsilon)}}{4\pi}\iint\limits_{B_{R_1}(0)\times B_{R_1}(0)}\left( \ln\frac{1}{\varepsilon\left |  \frac{T_x+T_y}{2}T_{X_\varepsilon}^{-1}\left( x'-y'\right)  \right |}+H_0(X_\varepsilon,X_\varepsilon)\right)g_{\varepsilon}(x')g_{\varepsilon}(y') dx'dy'+o(1)\\
=&\frac{\sqrt{\det K(X_\varepsilon)}}{4\pi}\iint\limits_{B_{R_1}(0)\times B_{R_1}(0)}  \ln\frac{1}{ \left |  \frac{T_x+T_y}{2}T_{X_\varepsilon}^{-1}\left( x'-y'\right)  \right |} g_{\varepsilon}(x')g_{\varepsilon}(y') dx'dy'\\
&+\frac{\sqrt{\det K(X_\varepsilon)}}{4\pi}\left(\int_{B_{R_1}(0)} g_{\varepsilon}(x') dx'\right)^2 \ln\frac{1}{\varepsilon}+\frac{\sqrt{\det K(X_\varepsilon)}H_0(X_\varepsilon,X_\varepsilon)}{4\pi}\left( \int_{B_{R_1}(0)}g_{\varepsilon}(x')dx'\right)^2 +o(1), 
\end{split}
\end{equation*}
\begin{equation*}
\begin{split}
-\frac{\bar{\alpha}}{2} \int_D|x|^2\zeta_{\varepsilon}(x)dx=-\frac{\bar{\alpha}\kappa}{2}|X_\varepsilon|^2+O\left (\varepsilon\ln\frac{1}{\varepsilon}\right ),
\end{split}
\end{equation*}
and
\begin{equation*}
	\begin{split}
\frac{1}{\varepsilon^2}\int_{D}J(|x|, \varepsilon^2\zeta_{\varepsilon}(x))dx =&\frac{1}{\varepsilon^2}\int_{D}\frac{(h^2+|x|^2)^2\left(\varepsilon^2\zeta_{\varepsilon}(x)-\frac{h^2\bar\alpha a\ep}{h^2+|x|^2}-b\right)_+^2}{4h^2a\ep+2a^2(h^2+|x|^2)}dx \\
=&  \det T_{X_\varepsilon}^{-1}\cdot\int_{B_{R_1}(0)}\frac{(h^2+|X_\varepsilon|^2)^2\left(g_{\varepsilon}(x')-\frac{h^2\bar\alpha a\ep}{h^2+|X_\varepsilon|^2}-b\right)_+^2}{4h^2a\ep+2a^2(h^2+|X_\varepsilon|^2)}dx'+O(\varepsilon) \\ 
=& \frac{\det T_{X_\varepsilon}^{-1}\cdot(h^2+|X_\varepsilon|^2)^2}{4h^2a\ep+2a^2(h^2+|X_\varepsilon|^2)}\int_{B_{R_1}(0)}\left(g_{\varepsilon}(x')-b\right)_+^2dx'+O\left (\varepsilon\ln\frac{1}{\varepsilon}\right ).
	\end{split}
	\end{equation*}
Taking these into \eqref{227}, we get
	\begin{equation*}
	\begin{split}
	E_\varepsilon( \zeta_{\varepsilon}) =&\frac{\sqrt{\det K(X_\varepsilon)}}{4\pi}\iint\limits_{B_{R_1}(0)\times B_{R_1}(0)}  \ln\frac{1}{ \left |  \frac{T_x+T_y}{2}T_{X_\varepsilon}^{-1}\left( x'-y'\right)  \right |} g_{\varepsilon}(x')g_{\varepsilon}(y') dx'dy'\\
	&+\frac{\sqrt{\det K(X_\varepsilon)}}{4\pi}\left(\int_{B_{R_1}(0)} g_{\varepsilon}(x') dx'\right)^2 \ln\frac{1}{\varepsilon}+\frac{\sqrt{\det K(X_\varepsilon)}H_0(X_\varepsilon,X_\varepsilon)}{4\pi}\left( \int_{B_{R_1}(0)}g_{\varepsilon}(x')dx'\right)^2\\
	&-\frac{\bar{\alpha}\kappa}{2}|X_\varepsilon|^2 -\frac{\det T_{X_\varepsilon}^{-1}\cdot(h^2+|X_\varepsilon|^2)^2}{4h^2a\ep+2a^2(h^2+|X_\varepsilon|^2)}\int_{B_{R_1}(0)}\left(g_{\varepsilon}(x')-b\right)_+^2dx'+o(1).
	\end{split}
	\end{equation*}
Similarly, we have
\begin{equation*} 
\begin{split}
	E_\varepsilon( \tilde{\zeta}_{\varepsilon}) =&\frac{\sqrt{\det K(X_\varepsilon)}}{4\pi}\iint\limits_{B_{R_1}(0)\times B_{R_1}(0)}  \ln\frac{1}{ \left |  \frac{T_x+T_y}{2}T_{X_\varepsilon}^{-1}\left( x'-y'\right)  \right |} g^\bigtriangleup_{\varepsilon}(x')g^\bigtriangleup_{\varepsilon}(y') dx'dy'\\
&+\frac{\sqrt{\det K(X_\varepsilon)}}{4\pi}\left(\int_{B_{R_1}(0)} g^\bigtriangleup_{\varepsilon}(x') dx'\right)^2 \ln\frac{1}{\varepsilon}+\frac{\sqrt{\det K(X_\varepsilon)}H_0(X_\varepsilon,X_\varepsilon)}{4\pi}\left( \int_{B_{R_1}(0)}g^\bigtriangleup_{\varepsilon}(x')dx'\right)^2\\
&-\frac{\bar{\alpha}\kappa}{2}|X_\varepsilon|^2-\frac{\det T_{X_\varepsilon}^{-1}\cdot(h^2+|X_\varepsilon|^2)^2}{4h^2a\ep+2a^2(h^2+|X_\varepsilon|^2)}\int_{B_{R_1}(0)}\left(g^\bigtriangleup_{\varepsilon}(x')-b\right)_+^2dx'+o(1).
\end{split}
\end{equation*}

Using \eqref{rearr ineq} and $E_\varepsilon(\tilde{\zeta}_{\varepsilon})\le E_\varepsilon(\zeta_{\varepsilon})$, we obtain
\begin{equation*} 
\begin{split}
&\iint\limits_{B_{R_1}(0)\times B_{R_1}(0)}  \ln\frac{1}{ \left |  \frac{T_x+T_y}{2}T_{X_\varepsilon}^{-1}\left( x'-y'\right)  \right |} g^\bigtriangleup_{\varepsilon}(x')g^\bigtriangleup_{\varepsilon}(y') dx'dy'\\
\leq& \iint\limits_{B_{R_1}(0)\times B_{R_1}(0)}  \ln\frac{1}{ \left |  \frac{T_x+T_y}{2}T_{X_\varepsilon}^{-1}\left( x'-y'\right)  \right |} g_{\varepsilon}(x')g_{\varepsilon}(y') dx'dy'+o(1), 
\end{split}
\end{equation*}
where $ x=X_\varepsilon+T^{-1}_{X_\varepsilon}\varepsilon x', y=X_\varepsilon+T^{-1}_{X_\varepsilon}\varepsilon y'  $. Passing $ \varepsilon $   to 0, we conclude that
	\begin{equation*}
	\iint\limits_{B_{R_1}(0)\times B_{R_1}(0)}\ln\frac{1}{|x-x'|}h(x)h(x')dxdx'\le \iint\limits_{B_{R_1}(0)\times B_{R_1}(0)}\ln\frac{1}{|x-x'|}g(x)g(x')dxdx'.
	\end{equation*}
	which together with $\eqref{226}$ yield to
	\begin{equation*}
	\iint\limits_{B_{R_1}(0)\times B_{R_1}(0)}\ln\frac{1}{|x-x'|}g(x)g(x')dxdx'= \iint\limits_{B_{R_1}(0)\times B_{R_1}(0)}\ln\frac{1}{|x-x'|}h(x)h(x')dxdx'.
	\end{equation*}
	By Lemma 3.2 of \cite{BG}, we know that there exists a translation $\mathcal{T}$ in $\mathbb{R}^2$ such that $\mathcal{T}g=h$. Note also that
	\begin{equation*}
	\int_{B_{R_1}(0)}xg(x)dx=\int_{B_{R_1}(0)}xh(x)dx=0.
	\end{equation*}
	Thus $ \mathcal{T} $ is an identity mapping and $g=h$. The proof is thus complete.
\end{proof}

Note that $ g_\ep\to g $ weakly in $ L^2(B_{R_1}(0)) $, where $ g $ is radially symmetric. We define $ \tilde{\psi_{\ep}} $ the scaled function of the stream function $ \psi_{\ep} $ by 
\begin{equation}\label{228}
\tilde{\psi_{\ep}}(y)=\psi_{\ep}\left (X_\ep+T_{X_\ep}^{-1}\ep y\right ), \ \ \ y\in D_\ep.
\end{equation}
From Lemmas \ref{lm4-2}, \ref{lm4-12} and the definition of $ g_\ep $ in \eqref{225}, we know that $ supp((\tilde{\psi_{\ep}})_+)=supp(g_\ep)\subseteq B_{R_1}(0). $
Moreover, one has
\begin{equation*} 
\begin{split}
-\nabla_y&\cdot\left(T_{X_\ep}K\left (X_\ep+T_{X_\ep}^{-1}\ep y\right )T_{X_\ep}^t\nabla_y \tilde{\psi_{\ep}}(y)\right)\\ =&-\varepsilon^2\nabla_x\cdot\left(K\left (x\right )\nabla_x \psi_{\ep}(x)\right)\left (X_\ep+T_{X_\ep}^{-1}\ep y\right )\\
=&\ep^2\zeta_{\ep}\left (X_\ep+T_{X_\ep}^{-1}\ep y\right )+\frac{\bar{\alpha}\ep^2}{2}\mathcal{L}_K|x|^2\left (X_\ep+T_{X_\ep}^{-1}\ep y\right )\\
=&g_\ep(y)+\frac{\bar{\alpha}\ep^2}{2}\mathcal{L}_K|x|^2\left (X_\ep+T_{X_\ep}^{-1}\ep y\right )\to g\ \ \text{ weakly ~in}\ \ L^2(B_{R_1}(0))\ \ \text{as}\ \ep\to0.
\end{split}
\end{equation*}
By classical regularity estimates for elliptic equations (see \cite{GT}), there exists a function $ \psi^*\in C^{1,\gamma}_{loc}(\mathbb{R}^2) $ for $ \gamma\in(0,1) $ such that as $ \ep\to0 $
\begin{equation}\label{229}
\tilde{\psi_{\ep}}\to \psi^*\ \ \text{in}\ C^1_{loc}(\mathbb{R}^2).
\end{equation}
Moreover, $ supp\left( \left (\psi^*\right )_+\right) \subseteq B_{R_1}(0)  $ and 
\begin{equation*} 
-\Delta \psi^*=g,\  \ \ \text{in}\ \mathbb{R}^2.
\end{equation*}
Using the moving plane method, we know that $ \left (\psi^*\right )_+ $ is radially symmetric in $ B_{R_1}(0) $. 

As a result, we have
\begin{lemma}\label{lm4-13}
There holds for $ i=1,2 $
\begin{equation*} 
\frac{1}{\varepsilon}\int_{D}\partial_{x_i}(\psi_{\ep}(x))_+dx\to 0\ \ \ \text{as}\ \ \ep\to0.
\end{equation*}
\end{lemma}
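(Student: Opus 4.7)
My approach would exploit the scaling setup already in place via \eqref{225} and \eqref{228}. Writing $x = X_\ep + T_{X_\ep}^{-1}\ep y$ and applying the chain rule gives
\begin{equation*}
\partial_{x_i}\psi_\ep(x) = \frac{1}{\ep}\sum_{j=1}^2 (T_{X_\ep})_{ji}\,\partial_{y_j}\tilde{\psi}_\ep(y),
\end{equation*}
and the Jacobian of the transformation is $\ep^2\det(T_{X_\ep}^{-1})$. Combining these, I would rewrite the quantity of interest as
\begin{equation*}
\frac{1}{\ep}\int_D \partial_{x_i}(\psi_\ep)_+\,dx \;=\; \det(T_{X_\ep}^{-1})\sum_{j=1}^2 (T_{X_\ep})_{ji}\int_{D_\ep}\partial_{y_j}(\tilde{\psi}_\ep)_+\,dy,
\end{equation*}
so the whole problem reduces to controlling $\int_{D_\ep}\partial_{y_j}(\tilde{\psi}_\ep)_+\,dy$ as $\ep\to 0$.

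The key step is to check that $(\tilde\psi_\ep)_+$ is compactly supported strictly inside $D_\ep$ once $\ep$ is small. Indeed, by Lemma \ref{lm4-11} the set $\{\psi_\ep>0\}$ coincides up to a null set with $\mathrm{supp}(\zeta_\ep)$, and by Lemma \ref{lm4-12} the latter has diameter at most $r_2\ep$. Under the scaling this translates to $\mathrm{supp}((\tilde\psi_\ep)_+)\subset B_{R_1}(0)$ with $R_1$ independent of $\ep$, while $D_\ep$ exhausts $\mathbb R^2$ as $\ep\to 0$ because $X_\ep\to (r_*,0)$ lies in the interior of $D$. In particular, $(\tilde\psi_\ep)_+ \in W^{1,\infty}_0(D_\ep)$ with support disjoint from $\partial D_\ep$ for all $\ep$ sufficiently small.

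With this in hand, since $(\tilde\psi_\ep)_+$ is Lipschitz and compactly supported inside $D_\ep$, the fundamental theorem of calculus (equivalently, the divergence theorem applied to the vector field $(\tilde\psi_\ep)_+ e_j$) yields
\begin{equation*}
\int_{D_\ep}\partial_{y_j}(\tilde\psi_\ep)_+\,dy = 0
\end{equation*}
for every $j\in\{1,2\}$ and every $\ep$ small. Substituting this back gives that $\ep^{-1}\int_D \partial_{x_i}(\psi_\ep)_+\,dx$ is identically $0$ for all small $\ep$, which is strictly stronger than the claimed $o(1)$ convergence.

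I don't expect a serious obstacle here; the only point requiring care is verifying the compactness of $\mathrm{supp}((\psi_\ep)_+)$ inside $D$, which follows cleanly from the diameter bound of Lemma \ref{lm4-12}, the location estimate $X_\ep\to (r_*,0)\in D$ of Lemmas \ref{lm4-6}--\ref{lm4-8}, and the fact that $\psi_\ep|_{\partial D} = -\bar\alpha R^2/2 - \mu_\ep < 0$ for $\ep$ small by virtue of the lower bound $\mu_\ep\geq Y((r_*,0))\ln(1/\ep) - C$ from Lemma \ref{lm4-5}. Alternatively, one may skip the scaling entirely and directly apply the divergence theorem to $(\psi_\ep)_+$ in $D$ itself; I find the scaled formulation slightly cleaner because it dovetails with the framework used to construct $\psi^*$ in \eqref{229} and with the radial symmetry of $g$ from Lemma \ref{le12}, both of which also trivially force the integrals to vanish in the limit.
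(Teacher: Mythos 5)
Your proof is correct, and it reaches the conclusion by a genuinely different (and more elementary) mechanism than the paper. Both arguments share the same scaffolding: the change of variables $x=X_\ep+T_{X_\ep}^{-1}\ep y$ and the localization of $\mathrm{supp}((\psi_\ep)_+)$ coming from Lemmas \ref{lm4-6}, \ref{lm4-8}, \ref{lm4-11} and \ref{lm4-12} (your containment $\{\psi_\ep>0\}\subseteq\mathrm{supp}(\zeta_\ep)$ is right: on the open set $\{\psi_\ep>0\}$ the profile \eqref{4-38} forces $\zeta_\ep>0$ a.e., so an open set differing from the closed support by a null set must be contained in it). The divergence point is the final step. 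The paper passes to the limit using the $C^1_{loc}$ convergence $\tilde\psi_\ep\to\psi^*$ of \eqref{229} and concludes that the limit integral vanishes because $(\psi^*)_+$ is radially symmetric (Lemma \ref{le12} plus the moving plane method), so the argument genuinely depends on the asymptotic analysis of the rescaled profile. You instead observe that $(\psi_\ep)_+$ is a Lipschitz function compactly supported in the interior of $D$ (or of $D_\ep$ after rescaling), so the divergence theorem gives $\int_D\partial_{x_i}(\psi_\ep)_+\,dx=0$ exactly for every small $\ep$, not merely $o(1)$. This is stronger, shorter, and bypasses Lemma \ref{le12}, the elliptic regularity step \eqref{229}, and the moving plane argument entirely; what you lose is nothing for the purposes of this lemma, though the paper's radial limit $\psi^*$ is of independent use in describing the asymptotic shape of the vortex core. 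Your only imprecision is the phrase that $\{\psi_\ep>0\}$ ``coincides'' with $\mathrm{supp}(\zeta_\ep)$ up to a null set; strictly it coincides with $\{\zeta_\ep>0\}$, and only the inclusion into the (closed) support is needed — this does not affect the argument.
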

\begin{proof}
Using variable substitution and \eqref{228}, we have
\begin{equation*} 
\begin{split}
\frac{1}{\varepsilon}\int_{D}\partial_{x_i}(\psi_{\ep}(x))_+dx=&\sum_{j=1}^2\left( T_{X_\ep}\right)_{i,j}\int_{B_{R_1}(0)}\partial_{y_j}(\tilde{\psi}_{\ep}(y))_+\cdot \det T_{X_\ep}^{-1}dy.
\end{split}
\end{equation*}
Here $ \left( T_{X_\ep}\right)_{i,j} $ stands for row $ i $ and column $ j $ of the matrix $ T_{X_\ep} $. Since \eqref{229} holds and $ \left (\psi^*\right )_+ $ is radially symmetric, we get
\begin{equation*} 
\int_{B_{R_1}(0)}\partial_{y_j}(\tilde{\psi}_{\ep}(y))_+dy\to \int_{B_{R_1}(0)}\partial_{y_j}(\psi^*(y))_+dy=0\ \ \ \text{as}\ \ \ep\to0.
\end{equation*}
Combining the above two formulas, we complete the proof.

\end{proof}
\subsection{Proof of Theorem \ref{thm2}}
Having made all the preparation, we are now able to give proof of Theorem \ref{thm2}. Let $h>0, \kappa>0, R>0$  be fixed numbers. For any $ r_*\in (0,R) $,  let
\begin{equation*}
\alpha=\frac{\kappa}{4\pi h\sqrt{h^2+r_*^2}}.
\end{equation*}
We first construct a family of  vorticity field $ \mathbf{w}_\varepsilon $ in terms of $ \zeta_{\varepsilon} $.  Let $ a>0, b\geq 0 $ be two prescribed constants and $ \bar{\alpha}=\alpha\ln\frac{1}{\ep} $.
From Lemmas \ref{lm4-2} and \ref{lm4-11}, we know that for any $ \varepsilon \in(0,\varepsilon_0)  $ all maximizers of $E_\ep$ over $\mathcal{M}_{\ep,\kappa}$ are of the form (see \eqref{4-38})
	\begin{equation*}
	\begin{split}
\zeta_{\ep}=\frac{1}{\ep^2}\frac{2h^2a\ep +a^2(h^2+|x|^2)}{(h^2+|x|^2)^2 }(\psi_{\ep})_+ +\frac{1}{\ep^2}\left(\frac{h^2\bar \alpha a\ep}{h^2+|x|^2}+b\right)\textbf{1}_{\{ \psi_{\ep}>0\} }	 \ \ a.e.\  \text{in}\  D,
\end{split}
\end{equation*}
where $ 	\psi_{\ep}=\mathcal{G}_K\zeta_{\ep}-\frac{\alpha |x|^2}{2}\ln{\frac{1}{\ep}}-\mu_{\ep}  $ and $\mu_{\varepsilon} $ is a  Lagrange multiplier determined by $\zeta_{\varepsilon}$. If we denote two functions 
\begin{equation*} 
F_1(s)=\ep^{-1}a (s-\mu_\ep)_+,\ \ \   F_2(s)=\ep^{-2}(b+\bar\alpha a \ep) \textbf{1}_{\{s-\mu_\ep>0\}},\ \ \  s\in \mathbb R,
\end{equation*}
then $ \zeta_{\ep} $ has the profile  
\begin{equation*} 
\begin{split}
\zeta_{\ep}=&\frac{2h^2F_1\left(\mathcal{G}_K\zeta_{\ep}-\frac{\bar\alpha}{2}|x|^2 \right)}{(h^2+|x|^2)^2}+\frac{F_1\left(\mathcal{G}_K\zeta_{\ep}-\frac{\bar\alpha}{2}|x|^2 \right)F_1'\left(\mathcal{G}_K\zeta_{\ep}-\frac{\bar\alpha}{2}|x|^2 \right)}{h^2+|x|^2}\\
&-\frac{\bar\alpha |x|^2 F_1'\left(\mathcal{G}_K\zeta_{\ep}-\frac{\bar\alpha}{2}|x|^2 \right)}{h^2+|x|^2}+F_2\left(\mathcal{G}_K\zeta_{\ep}-\frac{\bar\alpha}{2}|x|^2 \right),\ \  \text{in}\ \  D,
\end{split}
\end{equation*}
which coincides with \eqref{3-10}. We define function pairs $ (\varPhi_\ep, V_\ep, W_\ep, w_\ep, v_\ep, \varphi_\ep) $ in $ D $ such that $ \varPhi_\ep=\mathcal{G}_K\zeta_{\ep} $, $ V_\ep $ satisfies \eqref{3-3}, $ W_\ep $ satisfies \eqref{3-6} and $ (w_\ep, v_\ep, \varphi_\ep) $ satisfies \eqref{3-1}. From the deduction of \eqref{3-10} in section 3, we know that $ (w_\ep, v_\ep) $ is a rotating-invariant solution pair of the system \eqref{1-23} with the angular velocity $ \bar{\alpha} $ and the  stream function $ \varphi_\ep $. 

By Lemmas \ref{lm4-6}, \ref{lm4-8}, \ref{lm4-9} and \ref{lm4-12}, the support set $ \bar{A}_\varepsilon $ of the maximizer $ \zeta_\varepsilon $ concentrates near $ (r_*,0) $ up to a rotation as $ \varepsilon\to0 $ with  $\int_{D}\zeta_\varepsilon dx=\kappa $. Define for any $ (x_1,x_2,x_3)\in B_{R}(0)\times\mathbb{R}, t\in\mathbb{R}  $
\begin{equation}\label{503}
\mathbf{w}_\ep(x_1,x_2,x_3,t)=\frac{w_3^\ep(x_1,x_2,x_3,t)}{h}\vec{\xi}+\frac{1}{h}\left (\partial_{x_2}v_{\vec{\xi}}^\ep(x_1,x_2,x_3,t), -\partial_{x_1}v_{\vec{\xi}}^\ep(x_1,x_2,x_3,t),0\right )^t,
\end{equation}
where $ w_3^\varepsilon(x_1,x_2,x_3,t) $ is a helical function satisfying
\begin{equation*}
w_3^\varepsilon(x_1,x_2,0,t)=w_\varepsilon(x_1,x_2,t)
\end{equation*}
and $ v_{\vec{\xi}}^\ep(x_1,x_2,x_3,t) $ is a helical function satisfying
\begin{equation*} 
v_{\vec{\xi}}^\ep(x_1,x_2,0,t)=v_\varepsilon(x_1,x_2,t).
\end{equation*}
From Theorem \ref{thm1}, we know that $ \mathbf{w}_\ep $ is a helical vorticity field of \eqref{1-2} with non-zero helical swirl $ v_{\vec{\xi}}^\ep $. Note that the support set of $ \mathbf{w}_\ep $ is a helical domain whose cross-section is $ \bar{A}_\varepsilon $. Inside this helical region, the helical swirl of $ \mathbf{w}_\ep  $ is non-zero, while outside the helical swirl is identical to zero. Moreover,  the cross-section of  $  supp(\mathbf{w}_\ep) $ tends asymptotically to a rotating point vortex $ \bar{R}_{\alpha|\ln\ep|}((r_*,0)) $ up to a rotation as $ \varepsilon\to0 $.

Define $ P(\tau) $ the intersection point of the curve  parameterized by \eqref{1007} and the $ x_1Ox_2 $ plane. 
Taking $ s=\frac{b_1\tau}{h} $ into \eqref{1007}, one computes directly that  $ P(\tau) $ satisfies a 2D point vortex model
\begin{equation*}
\begin{split}
P(\tau)=\bar{R}_{\alpha'\tau}((r_*,0)),
\end{split}
\end{equation*}
where $ \alpha'=\frac{1}{\sqrt{h^2+r_*^2}}\left( a_1+\frac{b_1}{h}\right)=\frac{\kappa}{4\pi h\sqrt{h^2+r_*^2}}=\alpha $. Thus by our construction, we  readily check that the support set of $ \mathbf{w}_\varepsilon(x_1,x_2,0,|\ln\varepsilon|^{-1}\tau) $ defined by  \eqref{503} concentrates near $ P(\tau) $ as $ \varepsilon\to 0 $, which implies that, the support set of  $ \mathbf{w}_\varepsilon $ is a topological traveling-rotating helical tube that does not change form and concentrates near $ \gamma(\tau) $ \eqref{1007} in sense   that for all $ \tau $,
\begin{equation*}
\text{dist}\left (\partial(supp(\mathbf{w}_\varepsilon(\cdot, |\ln\varepsilon|^{-1}\tau))), \gamma(\tau)\right ) \to 0,\ \ \text{as}\ \varepsilon\to0.
\end{equation*}


It remains to prove that the vorticity field $ \mathbf{w}_\varepsilon $ tends asymptotically to \eqref{1007} in distributional sense, i.e., $ 	\mathbf{w}_\varepsilon(\cdot,|\ln\varepsilon|^{-1}\tau)\to \kappa \delta_{\gamma(\tau)}\mathbf{t}_{\gamma(\tau)}  $ as $ \varepsilon\to0 $. Note that \eqref{503} holds. We will give the proof in two steps. 

\textbf{Step 1.} We calculate $ \frac{w_3^\ep}{h}\vec{\xi} $. By \eqref{2-1}, we get
\begin{equation}\label{505} 
	w_\varepsilon=\zeta_\varepsilon-\frac{2h^2 v_{\ep}}{|\vec{\xi}|^4}-\frac{x_1 \partial_{x_1}v_{\ep}+x_2\partial_{x_2}v_{\ep}}{|\vec{\xi}|^2}.
\end{equation}
By the definition of the constraint set \eqref{4-9} and \eqref{4-38}, we have
\begin{equation}\label{504} 
\kappa=\int_{D}\zeta_\varepsilon dx=\int_{D}\frac{1}{\ep^2}\frac{2h^2a\ep +a^2(h^2+|x|^2)}{(h^2+|x|^2)^2 }(\psi_{\ep})_+ +\frac{1}{\ep^2}\left(\frac{h^2\bar \alpha a\ep}{h^2+|x|^2}+b\right)\textbf{1}_{\{ \psi_{\ep}>0\} } dx.
\end{equation}
So by \eqref{504}, the choice of $ F_1(\cdot) $ and \eqref{3-3}, we obtain
\begin{equation*} 
\int_{D}\frac{2h^2 v_{\ep}}{|\vec{\xi}|^4}dx=\int_{D}\frac{2h^2 a(\psi_{\ep})_+}{\varepsilon(h^2+|x|^2)^2}dx=O(\varepsilon).
\end{equation*}
From Lemma \ref{lm4-13}, we get 
\begin{equation*} 
\int_{D}\frac{x_1 \partial_{x_1}v_{\ep}}{|\vec{\xi}|^2}dx=\int_{D}\frac{ax_1 \partial_{x_1}(\psi_{\ep})_+}{\varepsilon(h^2+|x|^2)}dx=\frac{aX_{\ep,1} }{\varepsilon(h^2+|X_\ep|^2)}\int_{D}\partial_{x_1}(\psi_{\ep})_+dx+o(1)=o(1).
\end{equation*}
Similarly, we have $ \int_{D}\frac{x_2 \partial_{x_2}v_{\ep}}{|\vec{\xi}|^2}dx=o(1). $ Taking these into \eqref{505}, we have
\begin{equation}\label{506}
\int_{D}w_\varepsilon dx=\kappa+o(1),
\end{equation}
which implies that in distributional sense as $ \ep\to0 $,
\begin{equation}\label{507}
\frac{w_3^\ep(\cdot,|\ln\varepsilon|^{-1}\tau)}{h}\vec{\xi}\to \kappa \delta_{\gamma(\tau)}\mathbf{t}_{\gamma(\tau)}.
\end{equation}

\textbf{Step 2.} We calculate  $ \frac{1}{h}\left (\partial_{x_2}v_{\vec{\xi}}^\ep(\cdot,|\ln\varepsilon|^{-1}\tau), -\partial_{x_1}v_{\vec{\xi}}^\ep(\cdot,|\ln\varepsilon|^{-1}\tau),0\right )^t $.
Note that from Lemma \ref{lm4-13}, one has
\begin{equation*}
\int_{D}\partial_{x_2}v_\ep dx=\frac{a}{\ep}\int_{D}\partial_{x_2}(\psi_{\ep})_+ dx=o(1),
\end{equation*}
which implies that $ \frac{1}{h}\left (\partial_{x_2}v_{\vec{\xi}}^\ep(\cdot,|\ln\varepsilon|^{-1}\tau), 0,0\right )^t\to 0 $ in distributional sense as $ \ep\to0 $. Similarly, we get $ \frac{1}{h}\left (0, -\partial_{x_1}v_{\vec{\xi}}^\ep(\cdot,|\ln\varepsilon|^{-1}\tau),0\right )^t\to 0 $.
So we have  as $ \ep\to0 $,
\begin{equation} \label{508}
\frac{1}{h}\left (\partial_{x_2}v_{\vec{\xi}}^\ep(\cdot,|\ln\varepsilon|^{-1}\tau), -\partial_{x_1}v_{\vec{\xi}}^\ep(\cdot,|\ln\varepsilon|^{-1}\tau),0\right )^t\to 0.
\end{equation}

Combining \eqref{507} and \eqref{508}, we conclude that  in distributional sense
\begin{equation*} 
\mathbf{w}_\varepsilon(\cdot,|\ln\varepsilon|^{-1}\tau)\to \kappa \delta_{\gamma(\tau)}\mathbf{t}_{\gamma(\tau)}\ \ \ \text{as}\ \ \ep\to0.
\end{equation*}
The proof of Theorem \ref{thm2} is thus complete.

\subsection*{Acknowledgments:}

\par
J. Wan was supported by NNSF of China (grants No. 12101045, 12471190). G. Qin was supported by China National Postdoctoral Program for Innovative Talents (No. BX20230009). 

\subsection*{Conflict of interest statement} On behalf of all authors, the corresponding author states that there is no conflict of interest.

\subsection*{Data availability statement} All data generated or analysed during this study are included in this published article  and its supplementary information files.

     \phantom{s}
     \thispagestyle{empty}

\end{document}